\newtheorem{theorem}{Theorem}[section]
\newtheorem{proposition}[theorem]{Proposition}
\newtheorem{corollary}[theorem]{Corollary}
\newtheorem{lemma}[theorem]{Lemma}
\newtheorem{definition}[theorem]{Definition}
\newcommand{\R}{\mathbb R}
\newcommand{\N}{\mathbb N}
\newcommand{\M}{\mathcal M}
\renewcommand{\P}{\mathcal P}
\newcommand{\1}{\mathbf1}
\newcommand{\A}{\mathcal A}
\newcommand{\B}{\mathcal B}
\def\e{\mathrm{e}}
\DeclareMathOperator{\supp}{supp}
\DeclareFontFamily{U}{matha}{\hyphenchar\font45}
\DeclareFontShape{U}{matha}{m}{n}{
      <5> <6> <7> <8> <9> <10> gen * matha
      <10.95> matha10 <12> <14.4> <17.28> <20.74> <24.88> matha12
      }{}
\DeclareSymbolFont{matha}{U}{matha}{m}{n}
\DeclareFontFamily{U}{mathx}{\hyphenchar\font45}
\DeclareFontShape{U}{mathx}{m}{n}{
      <5> <6> <7> <8> <9> <10>
      <10.95> <12> <14.4> <17.28> <20.74> <24.88>
      mathx10
      }{}
\DeclareSymbolFont{mathx}{U}{mathx}{m}{n}
\DeclareMathDelimiter{\vvvert}{0}{matha}{"7E}{mathx}{"17}
\title{The mean-field equation of a leaky integrate-and-fire neural network: measure solutions and steady states}
\date{}
\author{Gr\'egory Dumont \thanks{Group for Neural Theory, LNC INSERM U960, DEC,
Ecole Normale Superieure PSL* University, Paris, France.
Email: gregory.dumont@ens.fr}
\and
Pierre Gabriel \thanks{Laboratoire de Math\'ematiques de Versailles, UVSQ, CNRS, Universit\'e Paris-Saclay,  45 Avenue des \'Etats-Unis, 78035 Versailles cedex, France. Email: pierre.gabriel@uvsq.fr}}
\begin{document}

\maketitle

\abstract{ 
Neural network dynamics emerge from the interaction of spiking cells. 
One way to formulate the problem is through a theoretical framework inspired by ideas coming from statistical physics, the so-called mean-field theory.
In this document, we investigate different issues related to the mean-field description of an excitatory network made up of leaky integrate-and-fire neurons. 
The description is written in the form a nonlinear partial differential equation which is known to blow up in finite time when the network is strongly connected.
We prove that in a moderate coupling regime the equation is globally well-posed in the space of measures, and that there exist stationary solutions.
In the case of weak connectivity we also demonstrate the uniqueness of the steady state and its global exponential stability.
The method to show those mathematical results relies on a contraction argument of Doeblin's type in the linear case, which corresponds to a population of non-interacting units.}

\

\noindent{\bf Keywords:} Neural network; leaky integrate-and-fire; piecewise deterministic Markov process; Doeblin's condition; measure solution; relaxation to steady state 

\

\noindent{\bf MSC 2020:} 92B20, 35R06, 35B40, 60J76


\section{Introduction}

The dynamics of neural networks is extremely complex. In the brain, a population of neurons is ruled by the interaction of thousands of nervous cells that exchange information by sending and receiving action potentials. Neuroscience needs a theory to relate key biological properties of neurons, with emerging behavior at
the network scale.
From a mathematical perspective, a neural network can simply be seen as a high-dimensional dynamical system of interacting elements. 
Unfortunately, introducing these
interactions tends to lead to models that are analytically
intractable.
Over the past few decades, a big challenge has been to reduce the description of neural circuits.

Most attempts to establish a mathematically tractable characterization of neural networks have made
use of mean-field theory (MT), see \cite{deco2008,Bres01} for a bio-physical review on the subject. Because each neuron receives input from
many others, a single cell is mostly responsive to the average
activity of the population - the mean-field - rather than the specific pattern of individual units. Based on theoretical concepts coming from statistical physics, MT gives rise to a so-called mean-field equation that defines the dynamic of a large (theoretically infinite) population of neurons \cite{deco2008,gerstner}. The use of MT is nowadays well accepted in neuroscience, and
it has already brought significant insights into the emergent properties of neural circuits. For instance, it has played a crucial part in the understanding of neural synchronization and emerging brain rhythms \cite{B02}.

Although MT is widespread among theoreticians, most of the mean-field equations are written within the language of partial differential equations (PDEs) for which there are only few mathematical studies. In this paper, our goal is precisely to fill this gap by considering a mean-field model that prevails in neuroscience. We focus our investigation on the existence and properties of the steady state measure of a PDE that arises for the description of an excitatory network of leaky integrate-and-fire (LIF) neurons.

The LIF model is a well-established neuron model within the neuroscience community \cite{Izi}. It consists of an ordinary differential equation that describes the subthreshold dynamics of a neuron membrane's potential. The equation is endowed with a discontinuous reset mechanism to account for the onset of an action potential. Whenever the membrane potential reaches the firing threshold, the neuron initiates an action potential and the membrane potential is reset, see \cite{Burkitt} for a review and \cite{abbott02,B01} for historical consideration. In its normalized form, the LIF model reads 
\begin{equation*}
\left\{
\begin{array}{l}%
\frac{d}{dt}v(t)=-v(t)+h\sum_{j=1}^{+\infty}\delta (t-t_{j})
\vspace{1mm}\\
\text{If} \quad v>1 \quad \text{then} \quad v\to v_r.
\end{array}%
\right.
\end{equation*}%
Here, $v_r\in(0,1)$ is the reset potential, $\delta$ is the Dirac measure, $h\in(0,1)$ is the so-called synaptic strength, and $t_{j}$ are the arrival times of action potentials that originate from presynaptic cells.
The fact that $h$ is positive means that we consider excitatory neurons.

Due to the presence of Dirac masses, the LIF equation describes a stochastic jump process, or piecewise deterministic Markov process \cite{Davis1984}. Those voltage jumps result from the activation of the synapse at the reception of an action potential, the so-called excitatory postsynaptic potential (EPSP). Note that the stochastic feature of the neural model is embedded in the Poisson distribution of time arrivals \cite{andre01}. It is worth saying that, despite its vast simplifications, the LIF model yields amazingly accurate predictions and is known to reproduce many aspects of actual neural data \cite{naud}. Of course, there have been several variants and generalizations of the model \cite{Izi}.
In Fig.~\ref{Fig01}, a simulation of the LIF model is presented. It illustrates the different processes involved in the membrane equation such as the voltage jumps at the reception of an action potential (i.e. the EPSPs), and the reset mechanism at the initiation of an action potential.

\begin{figure}[]
   \begin{center}
      \includegraphics[width=16.cm]{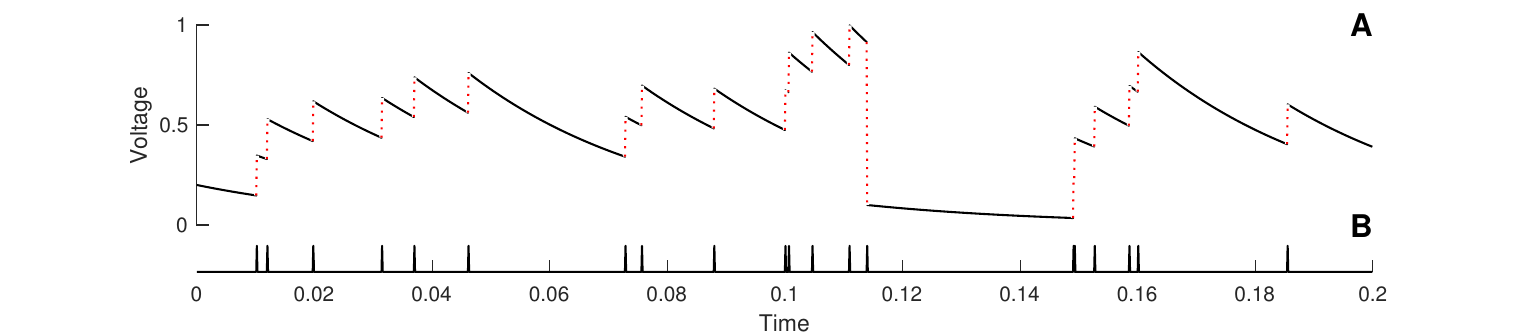}
		   \caption{Simulation of the LIF model. A) Time evolution of the membrane potential. B) The panel illustrates the arrival times of impulses, so-called Poisson spike train. The red dots correspond to discontinuities induced by the jump process. The parameters are: $h=0.2$, $v_r=0.1$ and Poisson rate $100$.}
		\label{Fig01}
		   \end{center}
\end{figure}

In a network, when a cell fires, the dynamics of each other neuron might be affected by the action potential. However, since synaptic transmissions are highly stochastic~\cite{Dobrunz,MaassZador}, the reception of an EPSP only occurs according to a certain probability. This probability plays the role of a coupling parameter.
More precisely it is reflected by a positive number $J$ representing the average number of cells undergoing an EPSP, so that the probability of reception of each neuron in a network of $N$ units is $J/N$.
The dynamics of a neural network made up of LIF neurons is plotted in Fig.~\ref{Fig02}. For each simulation, we show the network raster plot where dots indicate the spiking time of individual units.  The panels correspond to different values of the coupling $J$. As we can see, for weak coupling, the network displays an asynchronous activity where each neuron fires irregularly (Fig.~\ref{Fig02}A). In contrast, when the coupling parameter is taken sufficiently large, the network enters into a synchronous state (Fig.~\ref{Fig02}B). The system seems to have a critical coupling value for which, above this value, the system is driven to a synchronous state, while below this value, it remains asynchronous \cite{cascade01,cascade02}.  A great deal of attention has been devoted to the precise functional forms of these patterns, 
and insight can be gained using MT.

 \begin{figure}[]
   \begin{center}
      \includegraphics[width=16.cm]{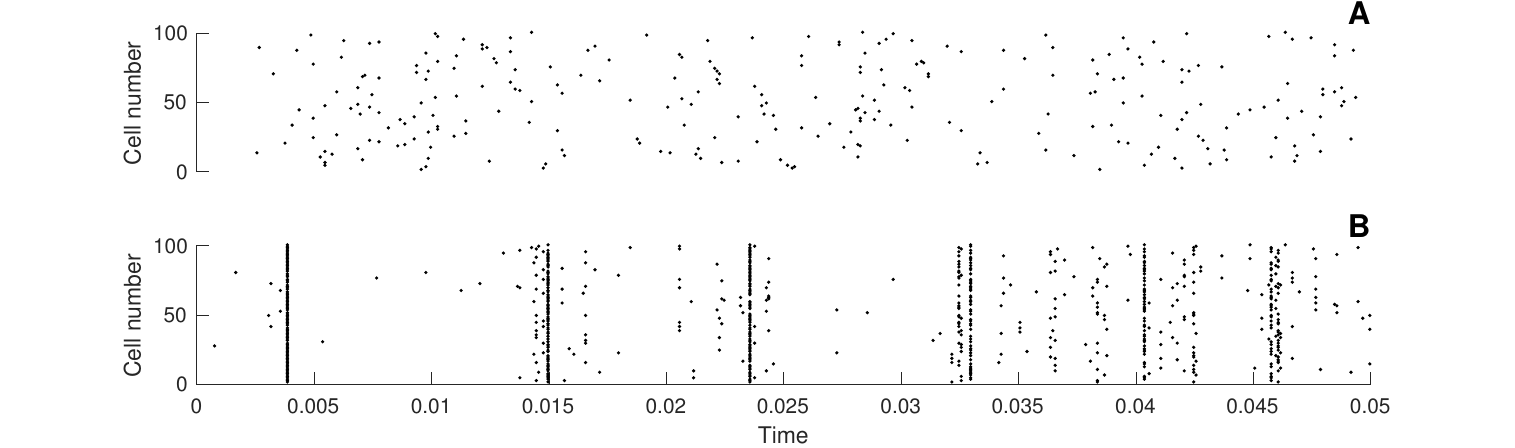}
		   \caption{Simulations of the neural network. The network contains $N = 100$ neurons. In each panel is shown the spiking activity of every
neuron in a raster plot (dots represent spikes). The parameters are: $h=0.1$, $v_r=0.1$ and Poisson rate $200$. The average affected cells $J$ is: A) $J=1$, B) $J=9$. }
		\label{Fig02}
		   \end{center}
\end{figure}

As mentioned above, MT is used to simplify the description of networks and is formally \cite{deco2008,Bres01} or rigorously \cite{DMGLP,FournierLocherbach} derived in the limit of an infinitely large number of elements. In this setting, trajectories of individual units are ignored, and instead, the focus is made on the probability of finding a randomly chosen neuron in a certain state. A continuity equation describing the dynamics of the probability density function (PDF) is then derived, 
and the study of the PDF forms the basis of the mean-field approach. The fundamental assumption at
the core of this theoretical setting is that all the neurons of the network share similar biophysical properties. 

A pioneering attempt to describe neural networks within the framework of MT was made around the 1970s with the seminal work of Wilson and Cowan, followed by a paper of Amari \cite{WC00,A01}. Since then the study of neural circuits within the mean-field approach
 has never lost interest within the scientific community. To mention just a few,  
Sirovich, Omurtag and Knight \cite{sirovich02},  Nykamp and Tranchina \cite{Nykamp2000}, Brunel and Hakim \cite{BH01,B02}, and the work of Gerstner \cite{gerstner2000population}, were among the first to study networks of LIF neurons using MT.

In the present work, we are interested in the mean-field equation of a LIF neural network derived in~\cite{sirovich}, see also~\cite{Nykamp2000}.
We denote the probability density function $p(t,v)$, such that $Np(t, v)dv$ gives the approximate number of neurons with membrane potential in $[v-dv,v)$ at time $t$ for a network made up of $N$ neurons. 
It is assumed that each neuron receives excitatory synaptic input with average rate  $\sigma (t)$ and fires action potentials at rate $r(t)$.
Then the dynamics of the density $p(t,v)$ is prescribed by the following nonlinear partial differential equation:
\begin{equation}\label{eq:LIF}
\frac{\partial }{\partial t}p(t,v)-\underbrace{\frac{\partial }{\partial v}[vp(t,v)]}_{\text{Leak}}
+\underbrace{\sigma (t) \big[p(t,v)  - p(t,v-h)\1_{[h,1)}(v)\big] }_{\text{Jump}}  =
\underbrace{\delta (v-v_{r}) r(t)}_{\text{Reset}},\qquad0<v<1,
\end{equation}
complemented with a zero flux boundary condition
\[p(t,1)=0.\]
Note that a similar equation is introduced in the influential textbook \cite{gerstner2014neuronal}.
We show in Fig.~\ref{State_Potential} a schematic representation of the state space for the mean-field equation where the different operators take place. The jump process of the mean-field equation accounts for stochastic EPSPs arrival at the cellular level.

\begin{figure}[]
\begin{center}
    \includegraphics[width=6cm]{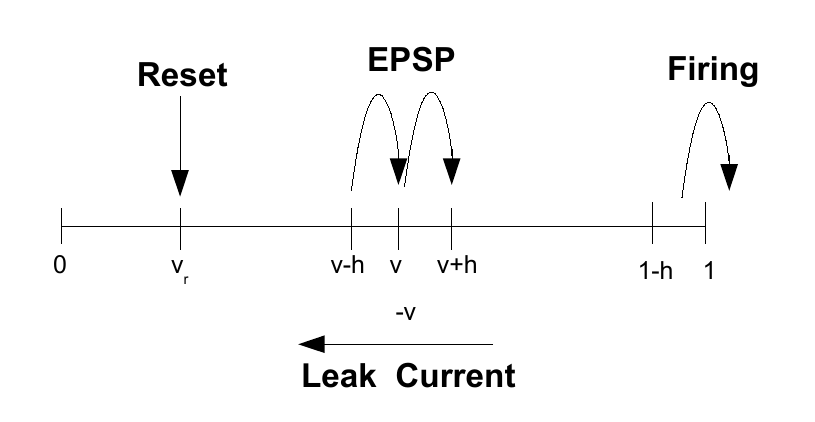}
  \caption{Schematic representation of the state space for the mean-field equation. }\label{State_Potential}
      \end{center}
\end{figure}

The firing activity of the network $r(t)$ is easily extracted from the mean-field equation. The proportion of cells crossing the threshold is given by the quantity of cells with potential between $1-h$ and $1$ that undergo a jump, see Fig.~\ref{State_Potential}, and so
\[r(t)=\sigma (t)\int_{1-h}^1 p(t,w) \,dw\]
since $\sigma(t)$ is the jump rate.
This expression guarantees that Equation~\eqref{eq:LIF} is formally conservative, in the sense that the integral of the solution is preserved along time. This is required since it should describe the evolution of a probability density.
If the arrival rate $\sigma(t)$ is a positive constant $\sigma_0$,
Equation~\eqref{eq:LIF} is the Kolmogorov forward equation of the LIF stochastic differential equation where the time arrivals of action potentials are distributed according to a Poisson process of rate $\sigma_0$.
In other words the probability that the potential $v(t)$ of a single LIF neuron (or each neuron of an unconnected population) belongs to a measurable subset $A$ of $(0,1)$ is given by $\int_Ap(t,w)dw$.
In the case of a connected network, the external arrival rate is modulated by the reception of EPSP emitted by the other neurons.
In the mean-field framework, it is assumed that single neurons are only sensitive to the average population activity \cite{sirovich}.
This postulate leads to the following expression of the arrival rate $\sigma (t)$ as the sum of an external rate $\sigma_0$ and the firing rate multiplied by the average number $J$ of synaptic connections
\[\sigma (t)=\sigma_0 +J r(t).\]
Combining the two above relations between $\sigma(t)$ and $r(t)$ we get an explicit formula for the arrival rate
\[\sigma(t)=\frac{\sigma_0}{1-J\int_{1-h}^1p(t,w)dw},\]
provided that the denominator is positive.
This makes the value $J=1$ appear critical and suggests that for $J>1$ some blow-up phenomena should occur when the initial distribution is concentrated enough around $v=1$, see~\cite{carrillo01} for a PDE view on this phenomenon and~\cite{Delarue1,Delarue2} for a stochastic perspective.
Actually it has been shown that the solutions to Equation~\eqref{eq:LIF} blow-up in finite time for any initial data in the strong connectivity regime \cite{DH02}. This was
attributed to the instantaneity of spikes firings and their immediate effects on the
firing of other cells. This is happening when 
\begin{equation*}
J\geq 1+\frac{1-v_r}{h} \qquad\text{and}\qquad h\sigma_0>1.
\end{equation*}
When $J<1$ the arrival rate is always well defined, no blow-up can occur, and the solutions exist for all time whatever the initial data \cite{DH}. Figure \ref{Fig03} portrays the dynamics of the mean-field equation in such a situation, with a Gaussian profile as initial condition (Fig.~\ref{Fig03}A). Under the drift and the jump process, the density function gives a non zero flux at the threshold, and this flux is reinjected right away according to the reset process. This effect can be clearly seen in the third panel of the simulation presented in Fig.~\ref{Fig03}B. Asymptotically, the solution reaches a stationary profile which is shown in Fig.~\ref{Fig03}C.

Although progresses have been made, several questions remain unanswered, specially in the moderate or weak connectivity regime. For instance, as we can see from some simulation presented above, we observe that the density converges toward a stationary state. Can we show the existence of a steady state? Can we analyze its stability properties? Answering these questions will allow us to form a deeper understanding of the asynchronous states of neural networks. Our challenge is to study the existence and properties of the mean-field equation steady states.

\

The paper is structured as follows. In Section~\ref{sec:main} we introduce the main notations and definitions, and we give a summary of the main results obtained throughout this manuscript.
Section~\ref{sec:linear_case} is devoted to the study of the linear regime, which corresponds to a population of uncoupled neurons ($J=0$).
More precisely we prove the well-posedness of the equation in the space of measures and, via a so-called Doeblin condition, the exponential convergence to an asynchronous state.
This is a crucial preliminary step before studying the nonlinear case.
In Section~\ref{sec:globalwp} we prove the existence and uniqueness of global in time measure solutions to Equation~\eqref{eq:LIF} in the moderate nonlinear regime $J<1.$
Section~\ref{sec:steadystates} deals with the stationary solutions and their possible exponential stability.
We show the existence of at least one steady state when
\[J<1+\left\lfloor\frac{1-v_r}{h}\right\rfloor\]
and the existence of at least two steady states when
\[J>1+\left\lfloor\frac{1-v_r}{h}\right\rfloor\qquad\text{and}\qquad\sigma_0<\frac{1-h}{4J}.\]
Eventually, we demonstrate the global exponential stability of the (unique) steady state in the weakly nonlinear regime $J\ll1.$
This work complements results on asynchronous state in different models~\cite{CanizoYoldas,MQT16,PakdamanPerthameSalort14,PerthameSalort}.

  \begin{figure}[]
   \begin{center}
      \includegraphics[width=16.cm]{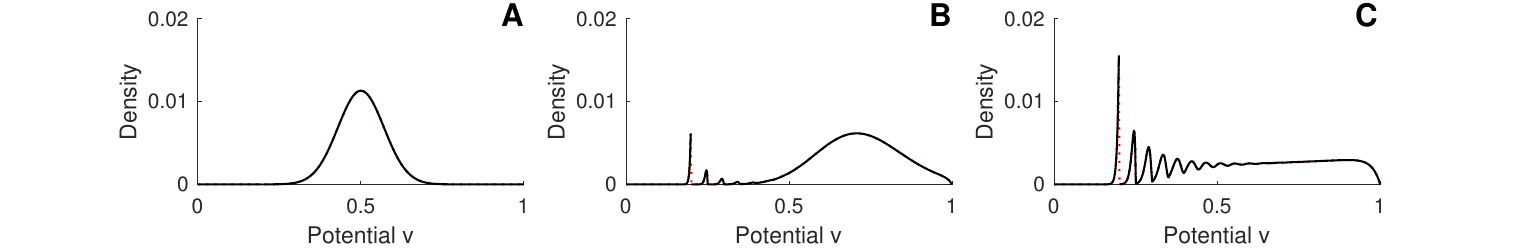}
		   \caption{Simulations of the MF equation. A gaussian
was taken as initial condition. The plots show in blue the evolution in time of the solution at different times. The red dots correspond to the discontinuity induced by the reset process. The parameters of the simulation are: $v_r = 0.3$, $h=0.05$, $\sigma_0 =50$, $J=0$ A) $t = 0$, B) $t = 0.12$, C) $t = 7$. }
		\label{Fig03}
		   \end{center}
\end{figure}

\section{Main results}\label{sec:main}

Measure solutions to structured population equations has attracted increasing interest over the past few years~\cite{CanizoCarrilloCuadrado,CarrilloColomboGwiazdaUlikowska,PG,GwiazdaLorenzMarciniak,GwiazdaWiedemann}.
In this paper we are concerned with measure solutions to the mean-field description of a LIF neural network given by Equation~\eqref{eq:LIF}.
Measure theory offers a very natural framework for two reasons.
First it allows to consider a Dirac mass initial distribution. Such an initial profile corresponds to a fully synchronous state and is thus perfectly relevant in neuroscience.
Second it is very well suited for dealing with equations having a singular source term (the reset part in Equation~\eqref{eq:LIF}).

\medskip

Before giving the definition of such solutions, we recall some results about measure theory (and we refer to~\cite{Rudin} for more details).
We endow the interval $[0,1]$ with its usual topology and the associated Borel $\sigma$-algebra.
We denote by $\M([0,1])$ the space of signed Borel measures on $[0,1],$
 by $\M_+([0,1])$ its positive cone (the set of finite positive Borel measures),
and by $\P([0,1])$ the set of probability measures.
The Jordan decomposition theorem ensures that any $\mu$ belonging to $\M([0,1])$ admits a unique decomposition
 $$\mu=\mu_+-\mu_-,$$
where $\mu_+$ and $\mu_-$ are positive and mutually singular.
The space $\M([0,1])$ is endowed with the total variation norm defined for all $\mu$ belonging to $\M([0,1])$ by
\[\left\|\mu\right\|_{\mathrm{TV}}:=\mu_+([0,1])+\mu_-([0,1]).\]
For any bounded Borel function $f$ on $[0,1]$ the supremum norm is defined by $$\left\|f\right\|_\infty=\sup_{0\leq v\leq 1}|f(v)|$$
and for any $\mu$ belonging to $\M([0,1])$ we use the notation
\[\mu f:=\int_{[0,1]}f\,d\mu.\]
Endowed with the supremum norm, the space $C([0,1])$ of continuous functions on $[0,1]$ is a Banach space.
The Riesz representation theorem ensures that $\M([0,1])$ can be identified with the topological dual space of $C([0,1])$ through the mapping
\[\begin{array}{ccc}
\M([0,1])&\to&C([0,1])'
\vspace{1mm}\\
\mu&\mapsto&\{f\mapsto\mu f\}
\end{array}\]
which is an isometric isomorphism:
\[\left\|\mu\right\|_{\mathrm{TV}}=\sup_{\|f\|_\infty\leq1}|\mu f|.\]
Recall that a sequence $(\mu_n)_{n\in\N}$ taken from $\M([0,1])$ is said to converge weak* to $\mu$ an element of $\M([0,1])$ if $(\mu_n f)_{n\in\N}$ converges to $\mu f$ for all $f$ belonging to $ C([0,1]).$

\

Now we can give the definition of a measure solution to Equation~\eqref{eq:LIF}.
We use the notation $\1_\Omega$ for the indicator function of a subset $\Omega$ of $[0,1],$ and we simply denote by $\1$ the constant function $\1_{[0,1]}.$

\begin{definition}\label{def:sol}
Let $T>0$, we say that a family $(\mu_t)_{t\geq0}$ of $\P([0,1])$ is a solution to Equation~\eqref{eq:LIF} on $[0,T)$ with initial datum $\mu_0$ if
\begin{itemize}
\item $t\mapsto\sigma(t):=\dfrac{\sigma_0}{1-J\mu_t([1-h,1])}$ is positive and locally integrable on $[0,T),$
\item $t\mapsto\mu_t$ is weak*-continuous on $[0,T),$
\item  and for all $f\in C^1([0,1])$ and all $t\in[0,T)$
\[\mu_tf=\mu_0f+\int_0^t\int_{[0,1]}\Big(\!-vf'(v)+\sigma(s)\big[f(v+h)\1_{[0,1-h)}(v)+f(v_r)\1_{[1-h,1]}(v)-f(v)\big]\Big)d\mu_s(v)\,ds.\]
\end{itemize}
\end{definition}

For the sake of simplicity, it is useful to define the following operators.
For any $f$ taken from $ C([0,1])$ we set
\[\B f(v)=f(v+h)\1_{[0,1-h)}(v)+f(v_r)\1_{[1-h,1]}(v)-f(v),\]
and, for any $f$ belonging to $C^1([0,1])$ and $\sigma>0,$
\[\A_\sigma f(v)=-vf'(v)+\sigma\,\B f(v).\]
With this definition the weak formulation of Equation~\eqref{eq:LIF} in Definition~\ref{def:sol} reads
\[\mu_tf=\mu_0f+\int_0^t\mu_s(\A_{\sigma(s)}f)\,ds.\]
Notice that $\A$ and $\B$ are conservative in the sense that $$\A\1=\B\1=0.$$
Notice also that $\B$ is a bounded operator in the sense that
\[\forall f\in C([0,1]),\qquad\|\B f\|_\infty\leq2\,\|f\|_\infty,\]
but in general $\B f $ is not a continuous function, and thus neither $\A_\sigma f$ when $f$ is taken from $ C^1([0,1]).$
This prevents the use of general results about the existence and uniqueness of measure solutions for structured population models
(see~\cite{CanizoCarrilloCuadrado,CarrilloColomboGwiazdaUlikowska,GwiazdaLorenzMarciniak}).
As we shall see, to prove the well-posedness of our problem, we use a duality method that is well suited for analysing steady states.

\

We can now present the main results of the paper regarding the mean-field description of LIF neural networks given by Equation~\eqref{eq:LIF}. Before that, let us mention that to avoid pathological situations where, starting from the reset potential $v_r,$ the potential can reach exactly the threshold $1$ by doing only jumps, we always assume that 
\[\frac{1-v_r}{h}\not\in\N.\]
The main results can be summarized by the two following theorems.

\medskip

\begin{theorem}\label{th:globalwp}
Assume that $J<1.$ Then for any initial probability measure $\mu_0$ there exists a unique global measure solution $(\mu_t)_{t\geq0}$ to Equation~\eqref{eq:LIF}, in the sense of Definition~\ref{def:sol}.
\end{theorem}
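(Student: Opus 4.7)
The plan is to recast the nonlinear problem as a fixed-point equation for the rate $\sigma$. Section~\ref{sec:linear_case} should provide, for any nonnegative locally integrable $\sigma$ and any initial probability measure $\mu_0$, a unique weak*-continuous family $(\mu^\sigma_t)_{t\geq0}$ in $\P([0,1])$ satisfying the linear weak formulation
\[
\mu^\sigma_t f = \mu_0 f + \int_0^t \mu^\sigma_s(\A_{\sigma(s)} f)\,ds, \qquad \forall f\in C^1([0,1]).
\]
Setting $\Psi[\sigma](t):=\sigma_0/(1-J\mu^\sigma_t([1-h,1]))$, any fixed point of $\Psi$ produces a solution of Equation~\eqref{eq:LIF}. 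The assumption $J<1$ is crucial here: since each $\mu^\sigma_t$ is a probability measure, $J\mu^\sigma_t([1-h,1])\leq J<1$, so $\Psi[\sigma]$ automatically takes values in $[\sigma_0,\sigma_0/(1-J)]$ regardless of $\sigma$, yielding a uniform a priori bound on the rate.

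Next I would apply the Banach contraction principle on the complete metric space
\[
X_T:=\bigl\{\sigma\in L^\infty([0,T]):\sigma_0\leq\sigma\leq\sigma_0/(1-J)\bigr\}
\]
equipped with the supremum norm. The key estimate is the Lipschitz dependence of the linear flow on the rate,
\[
\|\mu^{\sigma_1}_t-\mu^{\sigma_2}_t\|_{TV}\leq C\,t\,\|\sigma_1-\sigma_2\|_{L^\infty([0,t])},
\]
which I would derive by testing the weak formulation against functions with $\|f\|_\infty\leq1$, subtracting the identities corresponding to $\sigma_1$ and $\sigma_2$, and applying Gronwall's lemma, exploiting that $\B$ has operator norm at most $2$ on $C([0,1])$ and that the dual transport associated to the leak term is $L^\infty$-contractive. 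Combined with the Lipschitz dependence of the denominator of $\Psi$, this produces $\|\Psi[\sigma_1]-\Psi[\sigma_2]\|_{L^\infty([0,T])}\leq C'T\,\|\sigma_1-\sigma_2\|_{L^\infty([0,T])}$, hence a contraction for $T$ small. Because the contraction constant depends only on $J,\sigma_0,h$ and not on the initial measure, local solutions can be concatenated into a global one, and uniqueness on each short interval yields global uniqueness.

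The step I anticipate as the main obstacle is ensuring that $t\mapsto\mu^\sigma_t([1-h,1])$ is admissible in the sense of Definition~\ref{def:sol}, since the indicator $\1_{[1-h,1]}$ is not continuous and weak* convergence of measures does not test against discontinuous integrands. I would address this by showing that, for a.e.\ $t>0$, the measure $\mu^\sigma_t$ carries no mass at the single point $v=1-h$: the flow $v\mapsto ve^{-t}$ associated to the leak instantaneously disperses any initial atom away from $1-h$, while the only atoms generated by the dynamics are resets at $v_r$ and their successive jump images $v_r+kh$, none of which equals $1-h$ thanks to the standing assumption $(1-v_r)/h\notin\N$. Equivalently, one can approximate $\1_{[1-h,1]}$ by continuous test functions and pass to the limit using the total variation estimate above. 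Once this regularity is secured, the reset rate $\sigma(\cdot)$ satisfies the requirements of Definition~\ref{def:sol}, and the fixed point delivers the unique global solution.
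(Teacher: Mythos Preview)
Your strategy---recasting the nonlinear problem as a fixed point for the rate $\sigma$ and contracting on $L^\infty([0,T])$---is different from the paper's, which instead builds the solution directly through a regularised dual construction (the functions $\psi_n$ and the time-inhomogeneous semigroups $N^n_{s,t}$) and then proves uniqueness via a Duhamel formula. Your route is natural, but as written it has a genuine gap in the Lipschitz step, and this gap is precisely where the paper invests most of its work.

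First, Section~\ref{sec:linear_case} only constructs the semigroup $(M_t)_{t\geq0}$ for a \emph{constant} rate $\sigma_0$; it does not provide the linear flow $(\mu^\sigma_t)_{t\geq0}$ for an arbitrary time-dependent $\sigma\in L^\infty$. That extension is not automatic in the measure setting because of the discontinuity of $\B$, and in fact the paper devotes Lemmas~\ref{lm:N^n_{s,t}}--\ref{lm:local} to constructing such flows via the regularised operators $\B^n$ with $\chi_n$ in place of $\1_{[1-h,1]}$.

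Second, and more seriously, your derivation of
\[
\|\mu^{\sigma_1}_t-\mu^{\sigma_2}_t\|_{TV}\leq C\,t\,\|\sigma_1-\sigma_2\|_{L^\infty}
\]
by ``subtracting the identities and applying Gr\"onwall'' does not go through as stated. Subtracting the weak formulations leaves the term $\int_0^t(\mu^{\sigma_1}_s-\mu^{\sigma_2}_s)(-vf')\,ds$, which is controlled by $\|f'\|_\infty$, not $\|f\|_\infty$; taking the supremum over $\|f\|_\infty\leq1$ therefore gives nothing. The standard cure is to absorb the drift into a semigroup and write a Duhamel formula
\[
\mu^{\sigma}_t f=\mu_0 M_t f+\int_0^t(\sigma(s)-\bar\sigma)\,\mu^{\sigma}_s\B M_{t-s}f\,ds,
\]
so that only bounded operators appear. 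But now $\B M_{t-s}f$ is \emph{not} a continuous function (because $\B$ involves $\1_{[1-h,1]}$), so $\mu^{\sigma}_s\B M_{t-s}f$ is not a priori well defined for a general measure $\mu^{\sigma}_s$, and one cannot simply bound it by $2\|f\|_\infty$ times the TV norm. This is exactly the obstruction the paper overcomes by first proving the Duhamel identity with the regularised operators $\B^n$ and $M^n_t$, and then passing to the limit with a careful Gr\"onwall-type estimate on $\sup_v\int_s^t|N^n_{s,\tau}(\chi_n-\chi_{n+p})(v)|\,d\tau$ (Lemma~\ref{lm:local}). Your argument about atoms at $v=1-h$ addresses a different (and easier) issue---measurability of $t\mapsto\mu_t([1-h,1])$---and does not resolve the discontinuity of $\B$ inside the Duhamel integrand.

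In short, your fixed-point-on-$\sigma$ plan can be made to work, but it requires essentially the same regularisation machinery that the paper develops; the sketch as written skips the step where the real difficulty lies.
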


\medskip

Let us remind the reader that for a connectivity $J<1,$ we already knew from~\cite{DH} that the mean-field description given by Equation~\eqref{eq:LIF} is globally well-posed in $L^1([0,1]).$
Theorem~\ref{th:globalwp} ensures that it is still the case in the larger space $\M([0,1]).$ The second theorem is about the steady states, {\it i.e.}
probability measures $\bar\mu$ which satisfy
\[\forall f\in C^1([0,1]),\quad \bar\mu(\A_{\bar\sigma}f)=0,\qquad\text{where}\quad\bar\sigma=\dfrac{\sigma_0}{1-J\bar\mu([1-h,1])}.\]
We already know from~\cite{DH02} that under the conditions
\begin{equation*}
J\geq 1+\frac{1-v_r}{h} \qquad\text{and}\qquad h\sigma_0>1
\end{equation*}
no steady state can exist since there is blow-up in finite time whatever the initial distribution.
The following theorem provides sufficient conditions for existence that can be compared to the above non-existence conditions, and a uniqueness and stability result in the case of small connectivity.

\begin{theorem}\label{th:steadystate}
Depending on the network connectivity, the following situations occur:
\begin{enumerate}
\item[(i)] Under the conditions
\[J>1+\left\lfloor\frac{1-v_r}{h}\right\rfloor\qquad\text{and}\qquad\sigma_0<\frac{1-h}{4J},\]
there exist at least two steady states.
\item[(ii)] If the following inequality holds
\[J<1+\left\lfloor\frac{1-v_r}{h}\right\rfloor,\]
then there exists at least one steady state.
\item[(iii)] In the case when
\[J<(5-2\sqrt6)\Big(\frac h4\Big)^{\sigma_0+1},\]
the steady state $\bar\mu$ is unique and globally exponentially stable.
More precisely there exist explicit constants $t_0,a>0$ such that for all $\mu_0$ taken from $\P([0,1])$ and all $t\geq0$
\[\big\|\mu_t-\bar\mu\big\|_{\mathrm{TV}}\leq \e^{-a(t-t_0)}\big\|\mu_0-\bar\mu\big\|_{\mathrm{TV}},\]
where $(\mu_t)_{t\geq0}$ is the unique solution to Equation~\eqref{eq:LIF} with initial datum $\mu_0.$
\end{enumerate}
\end{theorem}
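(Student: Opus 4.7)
The plan is to reduce the existence of nonlinear steady states to a scalar self-consistency problem. By the linear theory of Section~\ref{sec:linear_case}, for every fixed $\sigma>0$ there exists a unique stationary probability $\nu_\sigma\in\P([0,1])$ satisfying $\nu_\sigma\A_\sigma f=0$ for all $f\in C^1([0,1])$; hence any nonlinear steady state $\bar\mu$ is of the form $\nu_{\bar\sigma}$ where $\bar\sigma$ solves
\[\bar\sigma\bigl(1-JF(\bar\sigma)\bigr)=\sigma_0,\qquad F(\sigma):=\nu_\sigma([1-h,1]).\]
Thus (i) and (ii) amount to finding zeros of the scalar function $\sigma\mapsto JF(\sigma)-1+\sigma_0/\sigma$ on $(\sigma_0,+\infty)$, while (iii) requires uniqueness of the fixed point plus a stability estimate.

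Next I would study $F$ qualitatively. Continuity of $\sigma\mapsto\nu_\sigma$ in the weak-$*$ topology follows from compactness of $\P([0,1])$ combined with linear uniqueness: any cluster point of $\nu_{\sigma_n}$ as $\sigma_n\to\sigma$ must be stationary for $\A_\sigma$, hence equal to $\nu_\sigma$. I would then compute the two limits. As $\sigma\to 0^+$ the drift dominates and mass collapses near $v=0$, so $F(\sigma)\to 0$. As $\sigma\to+\infty$ the drift becomes negligible on the jump timescale and $\nu_\sigma$ concentrates on the jump orbit $\{v_r+kh:k=0,\ldots,n-1\}$ with $n:=1+\lfloor(1-v_r)/h\rfloor$, distributing mass $1/n$ to each site because the induced chain is cyclic of period $n$ (the assumption $(1-v_r)/h\notin\N$ ensuring that only $v_r+(n-1)h$ lies in $[1-h,1]$). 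Together with $F(\sigma_0)>0$ this yields (ii): when $J<n$ the function $JF(\sigma)-1+\sigma_0/\sigma$ is positive at $\sigma_0$ and negative at infinity, so the intermediate value theorem produces a zero.

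For (i), the hypothesis $J>n$ makes that function positive at both endpoints, and I must locate an intermediate $\sigma^*$ where $JF(\sigma^*)+\sigma_0/\sigma^*<1$. I would bound $F(\sigma)$ from above on a moderate range of $\sigma$ by exploiting the leak: for $\sigma$ neither too small nor too large, the deterministic drift has enough time between jumps to pull any particle in $[1-h,1]$ toward lower potentials. Optimising the choice of $\sigma^*$ against the smallness hypothesis $\sigma_0<(1-h)/(4J)$ should yield the required inequality, the factor $1/4$ arising from this optimisation; applying the intermediate value theorem separately on $(\sigma_0,\sigma^*)$ and $(\sigma^*,+\infty)$ then produces two distinct steady states.

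Finally, for (iii) I would combine the Doeblin contraction of Section~\ref{sec:linear_case} with a perturbative Gronwall argument. Denoting by $(S^\sigma_t)_{t\geq 0}$ the linear measure semigroup at constant input rate $\sigma$, Doeblin gives an explicit contraction rate $\gamma\sim(h/4)^{\sigma_0+1}$ in total variation on mean-zero measures. A Duhamel identity comparing $\mu_t$ with $S^{\bar\sigma}_t\mu_0$ introduces an error of size $\int_0^t |\sigma(s)-\bar\sigma|\,ds$ which, by definition of $\sigma$, is controlled by $J\int_0^t\|\mu_s-\bar\mu\|_{TV}\,ds$. Closing this into an exponential decay by Gronwall requires $J$ to be small compared to $\gamma$; after optimising the constants the explicit threshold $(5-2\sqrt6)(h/4)^{\sigma_0+1}$ emerges as the smaller root of a quadratic compatibility condition. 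Uniqueness of $\bar\mu$ follows by applying the same estimate between two steady states. The main obstacle I anticipate is the sharp quantitative control on $F(\sigma^*)$ needed in part (i) to match the explicit constant $1/4$; in part (iii) the perturbation scheme is essentially routine once the Doeblin constants from Section~\ref{sec:linear_case} are in hand, but one has to track every constant carefully to recover the stated threshold.
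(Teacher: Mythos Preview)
Your overall architecture matches the paper's: reduce to the scalar equation $F(\sigma)=G(\sigma)$ with $G(\sigma)=\frac1J(1-\sigma_0/\sigma)$, study the limits of $F$, and apply the intermediate value theorem; for (iii), combine Duhamel with the Doeblin contraction and close by Gr\"onwall. Part (iii) is essentially what the paper does and your sketch is sound.

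There is, however, a genuine gap in your treatment of $F$ that affects both (i) and (ii). You argue continuity of $\sigma\mapsto\nu_\sigma$ via weak-$*$ compactness plus uniqueness, and then infer continuity of $F(\sigma)=\nu_\sigma([1-h,1])$. But $\1_{[1-h,1]}$ is not continuous, so weak-$*$ convergence of $\nu_{\sigma_n}$ to $\nu_\sigma$ does \emph{not} yield $F(\sigma_n)\to F(\sigma)$; the same obstruction blocks your computation of $\lim_{\sigma\to\infty}F(\sigma)$ even once you have identified the weak-$*$ limit as the uniform measure on the jump orbit. The paper's remedy is a regularity lemma for the linear invariant measure: $\nu_\sigma$ has a density $p_\sigma$ with the explicit pointwise bound
\[
0<p_\sigma(v)\leq\min\Big\{\frac{\sigma v^{\sigma-1}}{h^\sigma},\ \frac{\sigma}{v}\Big\},
\]
obtained from the ODE $(vp_\sigma)'\leq\sigma p_\sigma$. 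This single estimate does three jobs at once: it upgrades weak-$*$ compactness to $L^1$-weak compactness via Dunford--Pettis (so that testing against the bounded function $\1_{[1-h,1]}$ is legitimate and $F$ is continuous); it gives the quantitative bound $F(\sigma)\leq \sigma h/(1-h)$, which after minimising $F(\sigma)-G(\sigma)\leq \frac{\sigma}{1-h}+\frac{\sigma_0}{J\sigma}-\frac1J$ over $\sigma$ produces exactly the condition $\sigma_0<(1-h)/(4J)$ in (i); and it justifies the limit $F(\sigma)\to0$ as $\sigma\to0$. Your heuristic for (i) (``the drift has enough time between jumps'') is not a substitute for this bound, and you correctly flag this as the main obstacle --- the missing idea is precisely the density estimate above. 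For the limit at infinity the paper further sandwiches $\1_{[1-h,1]}$ between continuous approximations $\tilde\chi_n\leq\1_{[1-h,1]}\leq\chi_n$ to pass from weak-$*$ convergence to convergence of $F$.
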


To our knowledge, it is the first time such a result is proved for the mean-field LIF model.
Before this work, the steady state analysis was only performed on the diffusion approximation of Equation~\eqref{eq:LIF}
in~\cite{carrillo01,CarrilloPerthameSalortSmets}, see the conclusion section~\ref{sec:conclusion} for a more detailed discussion.

\section{The linear case}\label{sec:linear_case}

Our strategy to prove Theorem \ref{th:globalwp} and Theorem \ref{th:steadystate} relies on a careful study of the mean-field equation when $J$ is taken to be zero.
This particular case corresponds to the description of an unconnected population of neurons. In this setting, the equation becomes linear and reads
\begin{equation}\label{eq:linear}
\frac{\partial }{\partial t}p(t,v)-\frac{\partial }{\partial v}\big[vp(t,v)\big]+ \sigma_0 \big[p(t,v)  - p(t,v-h)\1_{[h,1)}(v)\big]=\bigg[\sigma_0 \int_{[1-h,1]} p(t,w) \,dw\bigg]\delta_{v=v_r}.
\end{equation}
Notice that this equation without the reset part, {\it i.e.} without the Dirac mass source term, was studied in~\cite{Heijmans}.
For the sake of clarity in the current section we will denote by $\A$ the operator $\A_{\sigma_0},$ $\sigma_0$ being a fixed positive number.
Since the equation is linear, we do not need to restrict the definition of a solution to probability measures.
We say that a family of measures $(\mu_t)_{t\geq0}$ is a solution to Equation~\eqref{eq:linear} with initial datum $\mu_0$ when the mapping $t\mapsto\mu_t$ is weak*-continuous and for all $f $ in $ C^1([0,1])$ and all $t\geq0$
\[\mu_tf=\mu_0f+\int_0^t\mu_s\A f\,ds.\]

\begin{theorem}\label{th:gen_Mt}
Equation~\eqref{eq:linear} generates a weak*-continuous semigroup on $\M([0,1]),$
{\it i.e.} there exists a semigroup $(M_t)_{t\geq0}$ of linear operators, which map any signed measure $\mu$ to another one $\mu M_t$ for any $t$,
such that for any initial measure $\mu_0$ the unique solution to Equation~\eqref{eq:linear} is given by the family $(\mu_0 M_t)_{t\geq0}.$
Additionally the set of probability measures $\P([0,1])$ is invariant under $(M_t)_{t\geq0}.$
In particular $(M_t)_{t\geq0}$ is a positive contraction semigroup.
\end{theorem}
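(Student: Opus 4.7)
The plan is to decompose the generator $\A=\A_{\sigma_0}$ as $\A=L+\sigma_0\B$, where $Lf(v):=-vf'(v)$ generates the transport semigroup $(T_t)_{t\ge0}$ acting on the space $B([0,1])$ of bounded Borel functions by $T_tf(v):=f(v\e^{-t})$ (read off the characteristic ODE $\dot v=-v$), and $\B+I$ is the bounded \emph{positive} jump kernel $(\B+I)f(v)=f(v+h)\1_{[0,1-h)}(v)+f(v_r)\1_{[1-h,1]}(v)$, which satisfies $(\B+I)\1=\1$ and $\|(\B+I)f\|_\infty\le\|f\|_\infty$. Both $T_t$ and $\B+I$ are conservative contractions on $B([0,1])$, but because $\B$ creates a jump at $v=1-h$, the whole construction must live on $B([0,1])$ rather than on $C([0,1])$.

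For the construction, I write $\A=(L-\sigma_0 I)+\sigma_0(\B+I)$ and note that $L-\sigma_0 I$ generates $\e^{-\sigma_0 t}T_t$. A Dyson--Phillips perturbation series then gives
\[M_tf:=\e^{-\sigma_0 t}\sum_{n=0}^{\infty}\sigma_0^n\int_{0<s_1<\cdots<s_n<t}T_{t-s_n}(\B+I)T_{s_n-s_{n-1}}(\B+I)\cdots(\B+I)T_{s_1}f\,ds_1\cdots ds_n.\]
The $n$-th term is bounded in sup-norm by $\e^{-\sigma_0 t}(\sigma_0 t)^n/n!\,\|f\|_\infty$, so the series converges absolutely on $B([0,1])$ with $\|M_tf\|_\infty\le\|f\|_\infty$, and the semigroup identity $M_{t+s}=M_sM_t$ follows by splitting the simplex according to how many factors fall before time $s$.

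Dualising to measures is then immediate. Since $T_t$ and $\B+I$ are positive and both fix $\1$, every term of the series is positive and $M_t\1=\1$; in particular $A\mapsto(M_t\1_A)(v)$ is a transition probability. I define $\mu M_t$ by $(\mu M_t)f:=\mu(M_tf)$ for $f\in C([0,1])$: the Markov property yields $\|\mu M_t\|_{TV}\le\|\mu\|_{TV}$ and invariance of $\P([0,1])$, which is precisely the positive contraction claim. Weak*-continuity of $t\mapsto\mu_0M_t$ follows from continuity in $t$ of each series term together with the uniform bound, via dominated convergence. Differentiating the series term-by-term, justified by its local uniform convergence, gives $\tfrac{d}{dt}M_tf=M_t\A f$ in $B([0,1])$ for every $f\in C^1([0,1])$; integrating against $\mu_0$ produces the weak formulation required by Definition~\ref{def:sol}.

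The main obstacle is uniqueness, since measure solutions are only tested against $C^1$ functions while $\A f$ is merely Borel. My approach is to show that any solution $(\nu_t)$ satisfies the mild formulation
\[\nu_tf=\e^{-\sigma_0 t}\nu_0(T_tf)+\sigma_0\int_0^t\e^{-\sigma_0(t-s)}\nu_s\bigl((\B+I)T_{t-s}f\bigr)ds,\qquad f\in C^1([0,1]),\]
obtained by computing $\tfrac{d}{ds}\nu_s\bigl(\e^{-\sigma_0(t-s)}T_{t-s}f\bigr)$ from the weak formulation and using that $T_{t-s}$ \emph{does} preserve $C^1$, so that the test function always stays admissible in the pairing. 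Subtracting the same identity for $\mu_0M_t$ gives a linear Volterra equation for the signed measure $\nu_t-\mu_0M_t$; the bound $\|(\B+I)T_{t-s}f\|_\infty\le\|f\|_\infty$ then feeds a Gronwall inequality in total variation norm which forces the difference to vanish.
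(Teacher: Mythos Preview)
Your approach is correct but genuinely different from the paper's. The paper builds $M_t$ on $C([0,1])$ by a Banach fixed point for the full Duhamel equation (Lemma~\ref{lm:fixedpoint}), then dualises; to show that $(\mu_0 M_t)$ actually solves the weak formulation and is unique, it introduces a regularisation $\chi_n\to\1_{[1-h,1]}$ and a regularised semigroup $M^n_t$ whose generator $\A^n$ maps $C^1$ to $C$, computes $\tfrac{d}{ds}\bigl(\int_0^s\mu_\tau M^n_{t-s}f\,d\tau\bigr)$ rigorously, and passes to the limit $n\to\infty$ (Appendix~\ref{app:linear}). You instead build $M_t$ on $B([0,1])$ by a Dyson--Phillips series for the bounded perturbation $\sigma_0(\B+I)$ of the transport semigroup, and for uniqueness you exploit that $T_{t-s}$ (unlike $M_{t-s}$) preserves $C^1$, so the time-dependent test function $\e^{-\sigma_0(t-s)}T_{t-s}f$ stays admissible in the weak formulation and the mild identity follows without any regularisation. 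What you gain is a shorter, self-contained linear argument; what you lose is the $\chi_n$ machinery, which the paper reuses heavily in the nonlinear Section~\ref{sec:globalwp} (the semigroups $N^n_{s,t}$ are built on exactly this approximation). Two points in your sketch deserve a fuller justification: the identity $\partial_tM_tf=M_t\A f$ is cleanest not by term-by-term differentiation of the series but via the Duhamel form $M_tf=S_tf+\sigma_0\int_0^tM_s(\B+I)S_{t-s}f\,ds$ with $S_t=\e^{-\sigma_0 t}T_t$, so that the $t$-derivative only hits $S_{t-s}f$ (legitimate for $f\in C^1$); and the product rule for $\tfrac{d}{ds}\nu_s(\e^{-\sigma_0(t-s)}T_{t-s}f)$ cannot be read off the weak formulation for fixed test functions and needs a finite-difference or integrated argument of the type the paper uses around~\eqref{eq:Duhamel_lin}.
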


\begin{proof}

In order to build the semigroup $(M_t)_{t\geq0}$ we follow the method in~\cite{PG}, which is based on the dual equation
\[\partial_tf(t,v)+v\partial_vf(t,v)+\sigma_0f(t,v)=\sigma_0\big[f(t,v+h)\1_{[0,1-h)}(v)+f(t,v_r)\1_{[1-h,1]}(v)\big],\]
with the initial condition $f_0.$
This equation is well-posed in the space of continuous functions, in the sense of the following lemma.

\begin{lemma}\label{lm:fixedpoint}
For any $f_0$ belonging to $C([0,1]),$ there exists a unique $f$ in $C(\R_+\times[0,1])$ which satisfies
\[f(t,v)=f_0(v\e^{-t})\e^{-\sigma_0t}+\sigma_0\int_0^t\e^{-\sigma_0\tau}\big[f(t-\tau,\e^{-\tau}v+h)\1_{[0,1-h)}(\e^{-\tau}v)+f(t-\tau,v_r)\1_{[1-h,1]}(e^{-\tau}v)\big]d\tau.\]
Additionally:
 $$f_0=\1 \Rightarrow f=\1\qquad\text{and} \qquad f_0\geq0 \Rightarrow f\geq0.$$
\end{lemma}

\begin{proof}
The proof consists in applying the Banach fixed point theorem.
Fix $T>0$ and define on the Banach space $C([0,T]\times[0,1])$ endowed with the supremum norm $\|\cdot\|_\infty$ the mapping $\Gamma$ by
\[\Gamma f(t,v):=f_0(v\e^{-t})\,\e^{-\sigma_0t}+\sigma_0\int_0^t\e^{-\sigma_0\tau}\big[f(t-\tau,v\e^{-\tau}+h)\1_{[0,1-h)}(v\e^{-\tau})+f(t-\tau,v_r)\1_{[1-h,1]}(v\e^{-\tau})\big]\,d\tau.\]
It is a contraction whatever the value of $T.$
Indeed it is an affine mapping and for $f_0\equiv 0$ we have
\[\|\Gamma f\|_\infty\leq\sigma_0\int_0^t\e^{-\sigma_0\tau}\|f\|_\infty\,d\tau= (1-\e^{-\sigma_0T})\|f\|_\infty.\]
The Banach fixed point theorem ensures the existence and uniqueness of a fixed point for $\Gamma$ in $C([0,T]\times[0,1]),$ for any $T>0.$
It is easy to check that $f=\1$ is a fixed point when $f_0=\1,$ and for the positivity it suffices to check that when $f_0\geq0,$ the 
 positive cone of $C([0,T]\times[0,1])$ is invariant under $\Gamma.$
\end{proof}

With this result we can define a family $(M_t)_{t\geq0}$ of linear operators on $C([0,1])$ by setting
\[M_tf_0:=f(t,\cdot).\]
The family $(M_t)_{t\geq0}$ thus defined is a semigroup, meaning that for all $f$ taken from $C([0,1])$ and all $s,t\geq0$
\[M_0f=f\qquad\text{and}\qquad M_{t+s}f=M_t(M_sf).\]
It is a consequence of the uniqueness in Lemma~\ref{lm:fixedpoint}, since 
$$(t,v)\mapsto M_{t+s}f(v)\qquad\text{and}\qquad (t,v)\mapsto M_t(M_sf)(v)$$ 
are both a fixed point of $\Gamma$ for $f_0=M_sf.$
Moreover this semigroup is conservative and positive, in the sense that for all $t\geq0$
\[M_t\1=\1\qquad\text{and}\qquad f\geq0\ \implies\ M_tf\geq0.\]
As a direct consequence it is a contraction for the supremum norm, meaning that for all $f\in C([0,1])$ and all $t\geq0$
\[\|M_tf\|_\infty\leq\|f\|_\infty.\]

Now we define by duality a semigroup on $$\M([0,1])=C([0,1])'.$$
For $\mu$ belonging to $\M([0,1])$ and $t\geq0$ we define $\mu M_t$ an element of $\M([0,1])$ by
\begin{equation}\label{def:left_Mt}
\forall f\in C([0,1]),\qquad (\mu M_t)f:=\mu(M_tf).
\end{equation}
The properties of the right action of $(M_t)_{t\geq0}$ are readily transfered to the left action by duality.
The left semigroup $(M_t)_{t\geq0}$ defined on $\M([0,1])$ by~\eqref{def:left_Mt} is conservative and positive, in the sense that for all $t\geq0$
\[\mu M_t([0,1])=\mu([0,1])\qquad\text{and}\qquad\mu\in\M_+([0,1])\quad\implies \quad \mu M_t\in\M_+([0,1]).\]
As a consequence it leaves invariant $\P([0,1])$ and it is a contraction for the total variation norm: for all signed measure $\mu$ and all $t\geq0$
\[\left\|\mu M_t\right\|_{\mathrm{TV}}=\left\|\mu_+ M_t-\mu_-M_t\right\|_{\mathrm{TV}}\leq
(\mu_+M_t)([0,1])+(\mu_-M_t)([0,1])=\mu_+([0,1])+\mu_-([0,1])=\left\|\mu\right\|_{\mathrm{TV}}.\]
The verification that the family $(\mu M_t)_{t\geq0}$ is the unique solution to Equation~\eqref{eq:linear} with initial datum $\mu$ requires regularizing the equation.
The rather technical details are postponed in Appendix~\ref{app:linear}, and this ends the proof of Theorem~\ref{th:gen_Mt}.
\end{proof}

\

Now we give a crucial ergodic result about the semigroup $(M_t)_{t\geq0}.$

\begin{theorem}\label{th:main_lin}
The semigroup $(M_t)_{t\geq0}$ admits a unique invariant probability measure $\bar\mu,$ {\it i.e.} there exists a unique $\bar\mu$ element of $\P([0,1])$ such that for all positive time $t$
\[\bar\mu M_t=\bar\mu.\]
This invariant measure is globally exponentially stable: for all $\mu$ belonging to $\M([0,1])$ and for all  $t$ being positive
\[\big\|\mu M_t-(\mu\1)\bar\mu\big\|_{\mathrm{TV}}\leq \e^{-a(t-t_0)}\big\|\mu-(\mu\1)\bar\mu\big\|_{\mathrm{TV}},\]
where the constants $t_0$ and $a$ are given by:
 $$t_0=\log\frac4h>0\qquad\text{and} \qquad a=\frac{-\log\big(1-\frac{\sigma_0}{2}(\frac h4)^{\!^{\sigma_0}}\big)}{\log\frac4h}>0.$$
\end{theorem}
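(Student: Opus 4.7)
The plan is to apply Doeblin's criterion for the semigroup $(M_t)_{t\geq 0}$. The heart of the argument is a minorization condition at the specific time $t_0 = \log(4/h)$: I would produce a probability measure $\nu \in \P([0,1])$ and the constant $c = \frac{\sigma_0}{2}(h/4)^{\sigma_0}$ such that
\[
\delta_v M_{t_0} \geq c\,\nu \qquad \text{for every } v\in[0,1].
\]
The choice of $t_0$ is natural because $\e^{-t_0}=h/4$, so the deterministic decay $\dot v=-v$ brings any initial condition into the small interval $[0,h/4]$ within time $t_0$. To obtain the minorization I would use the PDMP representation suggested by the dual equation: $\mu M_t$ is the law of a process $(V_t)$ that satisfies $\dot V=-V$ between jumps and at rate $\sigma_0$ either jumps to $V+h$ (if $V<1-h$) or is reset to $v_r$ (if $V\in[1-h,1]$). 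The lower bound should come from an event which is available from any starting point $v$: for a high starting point one can prescribe a reset event in a chosen subwindow of $[0,t_0]$ followed by pure decay (producing a density on $[v_r\e^{-t_0},v_r]$ via the change of variable $w=v_r\e^{-(t_0-s)}$), while for a low starting point one first needs an upward-jump phase to bring the particle into $[1-h,1]$ before the reset can be triggered.

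Once the minorization is established, the rest is classical. Decomposing a zero-mass signed measure $\mu=\mu_+-\mu_-$ into its Jordan parts (each of total mass $\frac12\|\mu\|_{TV}$), applying the minorization to the two normalized parts, and subtracting a common multiple of $\nu$ gives the strict contraction
\[
\|\mu M_{t_0}\|_{TV} \leq (1-c)\|\mu\|_{TV} \quad\text{for every } \mu\in\M([0,1]) \text{ with } \mu\1 = 0.
\]
The map $\mu\mapsto\mu M_{t_0}$ is therefore an affine strict contraction on $\P([0,1])$, which is a closed subset of the Banach space $(\M([0,1]),\|\cdot\|_{TV})$. Banach's fixed-point theorem (equivalently, the iterates $\mu_0 M_{t_0}^n$ form a Cauchy sequence by geometric summation) produces a unique $\bar\mu\in\P([0,1])$ with $\bar\mu M_{t_0}=\bar\mu$. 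For any $s\geq 0$, the measure $\bar\mu M_s$ is again a fixed point of $M_{t_0}$ by the semigroup property ($\bar\mu M_s M_{t_0}=\bar\mu M_{t_0} M_s=\bar\mu M_s$), so uniqueness forces $\bar\mu M_s=\bar\mu$, establishing invariance under the whole semigroup.

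For the exponential stability, conservativity $(\mu M_t)\1=\mu\1$ together with the invariance of $\bar\mu$ gives $\mu M_t-(\mu\1)\bar\mu=(\mu-(\mu\1)\bar\mu) M_t$, and this signed measure has zero mass. For $t\geq t_0$, I would write $t=n t_0+r$ with $n=\lfloor t/t_0\rfloor$ and $r\in[0,t_0)$, and combine the TV-contraction of $M_r$ from Theorem~\ref{th:gen_Mt} with $n$ iterations of the strict contraction at time $t_0$:
\[
\|\mu M_t-(\mu\1)\bar\mu\|_{TV} \leq (1-c)^n\|\mu-(\mu\1)\bar\mu\|_{TV} \leq \e^{-a(t-t_0)}\|\mu-(\mu\1)\bar\mu\|_{TV},
\]
with $a=-\log(1-c)/t_0$; for $t<t_0$ the bound is trivial since $\e^{-a(t-t_0)}\geq 1$. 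Substituting $c=\frac{\sigma_0}{2}(h/4)^{\sigma_0}$ and $t_0=\log(4/h)$ reproduces the constants in the statement.

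The main obstacle is Step~1, the Doeblin minorization with the explicit constants. The reset mechanism is triggered only when $V\in[1-h,1]$, so from a starting point $v_0<1-h$ the particle must first make several upward jumps before a reset can occur; designing a single scenario that (i) has uniformly positive probability bounded below in $v_0\in[0,1]$, (ii) produces an identifiable reference distribution $\nu$, and (iii) yields exactly the constant $\frac{\sigma_0}{2}(h/4)^{\sigma_0}$ is the delicate step. Everything else---the contraction inequality on the hyperplane of zero-mass measures, the fixed-point argument, and the extension from probability measures to arbitrary signed measures via the decomposition $\mu=(\mu\1)\bar\mu+(\mu-(\mu\1)\bar\mu)$---is standard once the minorization is in place.
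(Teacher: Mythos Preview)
Your overall architecture---Doeblin minorization at time $t_0$, contraction on the zero-mass hyperplane, Banach fixed point, propagation of invariance via the semigroup property, and the iteration to get the exponential rate---matches the paper exactly, and that part of your write-up is fine.

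The gap is in the minorization itself, which you correctly flag as the main obstacle but do not actually carry out. More importantly, the scenario you sketch (reset followed by decay for high $v$, a chain of upward jumps into $[1-h,1]$ then reset for low $v$) is \emph{not} the mechanism the paper uses, and it would be hard to make work with the stated constants. The reset event is only available while $V\in[1-h,1]$; since the drift is $\dot V=-V$, a particle leaves this window exponentially fast, and a particle starting below $1-h$ can only enter it by first accumulating roughly $\lfloor(1-v)/h\rfloor$ jumps. Controlling the probability of such a variable-length jump chain uniformly in $v$ within the fixed window $t_0=\log(4/h)$, and landing on the specific constant $c=\frac{\sigma_0}{2}(h/4)^{\sigma_0}$, is not straightforward.

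The paper bypasses the reset entirely. Working on the dual side, it iterates the Duhamel formula for $M_tf$ once and simply \emph{drops} the reset term, keeping only the ``one upward jump'' contribution:
\[
M_tf(v)\ \geq\ \sigma_0\,\e^{-\sigma_0 t}\int_0^t f\big((v\e^{-\tau}+h)\e^{-(t-\tau)}\big)\,\1_{[0,1-h)}(v\e^{-\tau})\,d\tau.
\]
The key observation is that after time $t_1=\log(1/h)$ the decayed position $v\e^{-\tau}$ lies in $[0,h]$ for \emph{every} $v\in[0,1]$, so the indicator is $1$ and the argument of $f$ lies in a $v$-independent interval. Choosing $t_0=t_1+\log 4$ and changing variables gives
\[
M_{t_0}f(v)\ \geq\ \frac{\sigma_0}{2}\Big(\frac{h}{4}\Big)^{\sigma_0}\cdot\frac{2}{h}\int_{h/2}^{h}f(w)\,dw,
\]
i.e.\ the Doeblin bound with $\nu=\frac{2}{h}\1_{[h/2,h]}$ (not a measure near $v_r$). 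The uniformity in $v$ comes for free from the exponential decay, and the explicit constants fall out directly. Your route through the reset would, at best, give a different $\nu$ and different constants; you should replace it by the ``decay then one jump'' scenario.
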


Notice that the values of $t_0$ and $a$ are explicit (in terms of the coefficients of the model) but not optimal.
The optimization of these constants is also an interesting issue that could be addressed in a future work.

The proof of Theorem~\ref{th:main_lin} relies on a contraction property obtained via a so-called Doeblin condition, see~\cite{Bansaye,PG} for recent presentations and developments on this method, or~\cite{CanizoYoldas} for an application to elapsed-time neural models, and also~\cite{PichorRudnicki} where a similar type of condition is used for the study of a Stein neural model related to ours.
More precisely we use the following well-known result, of which we give a short proof for the sake of completeness.

\begin{proposition}\label{prop:expo_contraction}
Let $(M_t)_{t\geq0}$ be a semigroup which leaves invariant $\P([0,1])$ and satisfies the Doeblin condition
\[\exists\, t_0>0, c\in(0,1), \nu\in\P([0,1])\quad\text{such that}\quad\forall\mu\in\P([0,1]),\ \mu M_{t_0}\geq c\,\nu.\]
Then for all $\mu,\widetilde\mu$ elements of $\P([0,1])$ we have
\[\forall t\geq0,\qquad\left\|\mu M_t-\widetilde\mu M_t\right\|_{\mathrm{TV}}\leq\e^{-a(t-t_0)}\left\|\mu-\widetilde\mu\right\|_{\mathrm{TV}}\]
with
\[a=\frac{-\log(1-c)}{t_0}>0.\]
\end{proposition}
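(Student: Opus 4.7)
The plan is to reduce the exponential decay claim to a single-step contraction at time $t_0$ with factor $(1-c)$, and then iterate using the semigroup property together with the fact that $(M_t)_{t\geq0}$ is a TV-contraction on signed measures (which follows from its preservation of $\P([0,1])$). The main device is the Jordan decomposition applied to the signed measure $\eta:=\mu-\widetilde\mu$, combined with the observation that Doeblin's condition allows one to "subtract off" the common part $c\nu$ from both sides.

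First, I would fix $\mu,\widetilde\mu$ in $\P([0,1])$ and set $\eta=\mu-\widetilde\mu\in\M([0,1])$. Since $\eta([0,1])=0$, the Jordan decomposition $\eta=\eta_+-\eta_-$ yields $\eta_+([0,1])=\eta_-([0,1])=m$, where $m=\tfrac12\|\eta\|_{TV}$. The case $m=0$ is trivial; otherwise, the normalized measures $\widehat\mu_\pm:=\eta_\pm/m$ belong to $\P([0,1])$, so Doeblin's hypothesis gives $\widehat\mu_\pm M_{t_0}\geq c\nu$. Writing
\[\eta M_{t_0}=m\bigl(\widehat\mu_+M_{t_0}-\widehat\mu_-M_{t_0}\bigr)=m\bigl[(\widehat\mu_+M_{t_0}-c\nu)-(\widehat\mu_-M_{t_0}-c\nu)\bigr],\]
the two measures in brackets are positive with common total mass $1-c$, so their difference has total variation at most $2(1-c)$. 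This yields the one-step contraction
\[\|\eta M_{t_0}\|_{TV}\leq 2m(1-c)=(1-c)\,\|\eta\|_{TV}.\]

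Next I would iterate. Since $(M_t)_{t\geq0}$ preserves $\P([0,1])$, it is a TV-contraction on all of $\M([0,1])$ (apply the same Jordan argument without Doeblin). By the semigroup property, $\eta M_{nt_0}=(\eta M_{(n-1)t_0})M_{t_0}$, and the $(n-1)$-th iterate of the Doeblin bound (applied to probability measures obtained from the Jordan decomposition at each step) gives
\[\|\eta M_{nt_0}\|_{TV}\leq(1-c)^n\|\eta\|_{TV}.\]
For arbitrary $t\geq0$, write $t=nt_0+r$ with $n=\lfloor t/t_0\rfloor$ and $r\in[0,t_0)$, and use the general contractivity of $M_r$ to reduce to the integer case. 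Choosing $a=-\log(1-c)/t_0>0$ and using $n\geq t/t_0-1$, one obtains $(1-c)^n\leq \e^{-a(t-t_0)}$, which gives the announced bound.

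The only subtle point is verifying that the exponent one gets from the Doeblin step is the "full" factor $(1-c)$ rather than a spurious $(1-c)/2$: it is here that the equality of the masses $\eta_+([0,1])=\eta_-([0,1])$ is crucial, allowing $c\nu$ to be cancelled from both terms simultaneously. Everything else is bookkeeping: the Jordan decomposition trick is standard, and writing out the inequalities carefully yields exactly the constant $a$ stated.
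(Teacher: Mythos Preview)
Your proposal is correct and follows essentially the same route as the paper: normalize the positive and negative parts of $\mu-\widetilde\mu$ to probabilities, apply Doeblin's condition to subtract the common mass $c\nu$, obtain the one-step contraction with factor $(1-c)$, then iterate via the semigroup property and handle the residual time using the TV-contractivity of $M_t$. The only cosmetic difference is notation (you work with $\eta_\pm/m$ where the paper rescales to $\bar\mu$ with $\|\bar\mu_\pm\|_{TV}=1$); the structure of the argument is identical.
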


\begin{proof}
Let $\mu$ and $\widetilde\mu$ be two probability measures on $[0,1]$
and define
\[\bar\mu:=\frac2{\left\|\mu-\widetilde\mu\right\|_{\mathrm{TV}}}(\mu-\widetilde\mu).\]
Since
\[(\mu-\widetilde\mu)_+([0,1])=(\mu-\widetilde\mu)_-([0,1])=\frac12\left\|\mu-\widetilde\mu\right\|_{\mathrm{TV}},\]
the positive part $\bar\mu_+$ and the negative part $\bar\mu_-$ of $\bar\mu$ are probability measures.
By virtue of Doeblin's condition we have
\[\bar\mu_\pm M_{t_0}\geq c\nu\]
and we deduce that
\[\left\|\bar\mu_\pm M_{t_0}-c\nu\right\|_{\mathrm{TV}}=(\bar\mu_\pm M_{t_0}-c\nu)([0,1])=1-c.\]
This property leads to
\[\left\|\bar\mu M_{t_0}\right\|_{\mathrm{TV}}\leq\left\|\bar\mu_+M_{t_0}-c\nu\right\|_{\mathrm{TV}}+\left\|\bar\mu_-M_{t_0}-c\nu\right\|_{\mathrm{TV}}=2(1-c),\]
and then
\[\left\|\mu M_{t_0}-\widetilde\mu M_{t_0}\right\|_{\mathrm{TV}}=\frac12\left\|\mu-\widetilde\mu\right\|_{\mathrm{TV}}\left\|\bar\mu M_{t_0}\right\|_{\mathrm{TV}}\leq(1-c)\left\|\mu-\widetilde\mu\right\|_{\mathrm{TV}}.\]
Now for $t\geq0$ we define $n=\big\lfloor\frac{t}{t_0}\big\rfloor$ and we get by induction
\[\left\|\mu M_t-\widetilde\mu M_t\right\|_{\mathrm{TV}}\leq(1-c)^n\left\|\mu M_{t-nt_0}-\widetilde\mu M_{t-nt_0}\right\|_{\mathrm{TV}}\leq \e^{n\log(1-c)}\left\|\mu-\widetilde\mu\right\|_{\mathrm{TV}}.\]
This ends the proof since
\[n\log(1-c)\leq\Big(\frac{t}{t_0}-1\Big)\log(1-c)=-a(t-t_0).\]

\end{proof}

\begin{proof}[Proof of Theorem~\ref{th:main_lin}]
The first step consists in proving that the semigroup $(M_t)_{t\geq0}$ satisfies the Doeblin condition
\[\forall f\geq0,\ \forall v\in[0,1],\qquad M_{t_0}f(v)\geq c\,(\nu f),\]
with $\nu=\frac{2}{h}\1_{[\frac h2,h]}$ the uniform probability measure on $[\frac h2,h]$ and the following constants
$$t_0=\log\frac 4h>0, \quad c=\frac{\sigma_0}{2}\big(\frac{h}{4}\big)^{\sigma_0}\in(0,1).$$
We start with the definition of $(M_t)_{t\geq0}$ which gives for $f\geq0$
\begin{align*}
M_tf(v)&=f(v\e^{-t})\e^{-\sigma_0t}+\sigma_0\int_0^t\e^{-\sigma_0\tau}\big[M_{t-\tau}f(\e^{-\tau}v+h)\1_{[0,1-h)}(\e^{-\tau}v)+M_{t-\tau}f(v_r)\1_{[1-h,1]}(\e^{-\tau}v)\big]d\tau\\
&\geq f(v\e^{-t})\e^{-\sigma_0t}+\sigma_0\int_0^t\e^{-\sigma_0\tau}M_{t-\tau}f(\e^{-\tau}v+h)\1_{[0,1-h)}(\e^{-\tau}v)\,d\tau.
\end{align*}
Iterating this inequality we deduce
\begin{align*}
M_tf(v)&\geq f(v\e^{-t})\e^{-\sigma_0t}+\sigma_0\int_0^t\e^{-\sigma_0t}f((\e^{-\tau}v+h)\e^{-(t-\tau)})\1_{[0,1-h)}(\e^{-\tau}v)\,d\tau\\
&\geq\sigma_0\e^{-\sigma_0t}\int_0^tf((\e^{-\tau}v+h)\e^{-(t-\tau)})\1_{[0,1-h)}(\e^{-\tau}v)\,d\tau.
\end{align*}
Let $t_1=-\log h$ the time after which all the neurons which did not undergo potential jumps have a voltage between $0$ and $h$
{\it i.e.} 
$$\forall \tau\geq t_1, \quad \forall v\in[0,1], \quad v\e^{-\tau}\in[0,h],$$
and let $t_2>0$ to be chosen later.
For $$t=t_0:=t_1+t_2,$$ we have
\begin{align*}
M_{t_0}f(v)&\geq\sigma_0\e^{-\sigma_0t_0}\int_{t_1}^{t_0}f(\e^{-t_0}v+h\e^{-(t_0-\tau)})d\tau\\
&\geq\sigma_0\e^{-\sigma_0t_0}\int_{t_1}^{t_0}f(\e^{-t_0}v+h\e^{-(t_0-\tau)})\,\e^{-(t_0-\tau)}d\tau\\
&=\frac{\sigma_0}{h}\e^{-\sigma_0t_0}\int_{v\e^{-t_0}+h\e^{-t_2}}^{v\e^{-t_0}+h}f(w)\,dw \hspace{40mm} \big(w=\e^{-t_0}v+h\e^{-(t_0-\tau)}\big)\\
&\geq\frac{\sigma_0}{h}\e^{-\sigma_0t_0}\int_{2h\e^{-t_2}}^{h}f(w)\,dw
\end{align*}
For the last inequality we have used that $$v\e^{-t_0}\leq\e^{-t_1-t_2}= h\e^{-t_2}.$$
So if we choose $t_2=\log4$ we get
\[M_{t_0}f(v)\geq\frac{\sigma_0}{2}\e^{-\sigma_0t_0}\frac{2}{h}\int_{\frac h2}^{h}f(w)\,dw=\frac{\sigma_0}{2}\Big(\frac{h}{4}\Big)^{\sigma_0}\nu(f)\]
and the Doeblin condition is proved.

\

As a consequence, Proposition~\ref{prop:expo_contraction} ensures that the mapping $$\mu\mapsto\mu M_{t_0}$$ is a contraction in the complete metric space $(\P([0,1]),\|\cdot\|_{\mathrm{TV}}),$
which therefore admits a unique fixed point $\bar\mu $ in $\P([0,1]).$
The semigroup property ensures that for all $t\geq0,$ $\bar\mu M_t$ is also a fixed point of $M_{t_0}.$
By uniqueness we get that $\bar\mu M_t=\bar\mu,$ meaning that $\bar\mu$ is invariant under $(M_t)_{t\geq0}.$
This concludes the proof since the exponential convergence is an immediate consequence of Proposition~\ref{prop:expo_contraction}.
\end{proof}

\section{Global well-posedness for $J<1$}\label{sec:globalwp}

The aim of this section is to prove Theorem~\ref{th:globalwp}.
Our method of proof relies on duality arguments and divides into several steps.
First we remark that if $(\mu_t)_{t\geq0}$ is a measure solution to Equation~\eqref{eq:LIF}
and $\psi(s,t,v)$ satisfies the nonlinear and nonlocal equation
\[\partial_s\psi(s,t,v)=v\partial_v\psi(s,t,v)+\frac{\sigma_0}{1-J\mu_0\psi(0,s,\cdot)}\big[\psi(s,t,v)-\psi(s,t,v+h)\1_{[0,1-h)}(v)-\psi(s,t,v_r)\1_{[1-h,1]}(v)\big]\]
with the terminal condition $\psi(t,t,v)=\1_{[1-h,1]}(v)$,
then we have
$$\mu_t([1-h,1])=\mu_0\psi(0,t,\cdot).$$
So if we know such a function $\psi$ we can deduce the value of $\sigma(t)$ and we can see Equation~\eqref{eq:LIF} as a time-inhomogeneous but linear equation.
We solve this equation in a similar way than the linear case.
This method of construction allows us to prove a Duhamel formula for Equation~\eqref{eq:LIF} which is then used to prove uniqueness.
The Duhamel formula is also the corner stone to prove the exponential stability of the steady state in the weakly connected regime (Section~\ref{sec:steadystates}).

The discontinuity of the indicator function $\1_{[1-h,1]}$ implies that $\psi,$
if it exists, is a discontinuous function.
This brings difficulties since the duality approach requires to work in the space of continuous functions.
To work around this problem, we approximate the indicator function $\1_{[1-h,1]}$ by
\[\chi_n(v):=\left\{\begin{array}{ll}
0&\text{if}\ v\leq 1-h-\frac hn,
\vspace{2mm}\\
1+\dfrac nh(v-1+h)\quad&\text{if}\ 1-h-\frac hn\leq v\leq 1-h,
\vspace{2mm}\\
1&\text{if}\ v\geq1-h,
\end{array}\right.\]
where $n\in\N^*,$ before passing to the limit $n$ goes to infinity.
The family $(\chi_n)_{n\geq1}$ is a decreasing sequence of continuous functions which converges pointwise to $\1_{[1-h,1]}.$

\medskip

In what follows, $\mu$ {\bf is a fixed probability measure}.
The first step consists in building, for $t\geq0,$ a regularized version of the function $\psi.$
For $T>0$ we denote by $X^T$ the space of three variables continuous functions on the set
\[\{(s,t,v),\,0\leq s\leq t<T,\,0\leq v\leq1\}\]
and for $n\in\N^*$  and $T$ small enough we define $\psi_n\in X^T$ as the unique solution to the nonlinear equation
\[\partial_s\psi_n(s,t,v)=v\partial_v\psi_n(s,t,v)+\frac{\sigma_0}{1-J\mu\psi_n(0,s,\cdot)}\big[\psi_n(s,t,v)-\psi_n(s,t,v+h)(1-\chi_n(v))-\psi_n(s,t,v_r)\chi_n(v)\big],\]
with the terminal condition $$\psi_n(t,t,v)=\chi_n(v).$$
More precisely $\psi_n$ is defined in the following lemma,
where we have set
\[T^*:=\frac{(1-J)^2}{2\sigma_0}.\]

\begin{lemma}
There exists a unique function $\psi_n$ such that
$$\psi_n\in\{f\in X^{T^*},\ 0\leq f\leq1\}$$ which satisfies
\begin{align*}
\psi_n(s,t,v)&=\chi_n(v\e^{s-t})\,\e^{\frac{\sigma_0}{1-J}(s-t)}+\int_s^t\bigg(\frac{\sigma_0}{1-J}-\frac{\sigma_0}{1-J\mu\psi_n(0,\tau,\cdot)}\bigg)\,\e^{\frac{\sigma_0}{1-J}(s-\tau)}\psi_n(\tau,t,v\e^{s-\tau})\,d\tau\\
&+\int_s^t\frac{\sigma_0}{1-J\mu\psi_n(0,\tau,\cdot)}\,\e^{\frac{\sigma_0}{1-J}(s-\tau)}\big[\psi_n(\tau,t,v\e^{s-\tau}+h)(1-\chi_n(v\e^{s-\tau}))+\psi_n(\tau,t,v_r)\chi_n(v\e^{s-\tau})\big]\,d\tau.
\end{align*}
\end{lemma}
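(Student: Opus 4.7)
The plan is to apply the Banach fixed point theorem to the nonlinear operator $\Gamma_n$ defined by the right-hand side of the claimed integral equation, acting on the closed convex subset
\[K := \{f\in X^{T^*}\,:\, 0\le f\le1\}\]
of the Banach space $(X^{T^*},\|\cdot\|_\infty)$. Two properties have to be verified: $\Gamma_n$ leaves $K$ invariant, and (possibly after iteration) $\Gamma_n$ is a strict contraction on $K$.

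For the invariance, note first that for every $f\in K$ one has $0\le \mu f(0,\tau,\cdot)\le1$, so the denominator $1-J\mu f(0,\tau,\cdot)$ stays in $[1-J,1]$, bounded away from zero. Continuity of $\Gamma_n f$ in $(s,t,v)$ then follows by dominated convergence. Nonnegativity $\Gamma_n f\ge0$ is immediate from $\chi_n\ge0$, $f\ge0$, and the identity $\frac{\sigma_0}{1-J}-\frac{\sigma_0}{1-J\mu f}=\frac{\sigma_0 J(1-\mu f)}{(1-J)(1-J\mu f)}\ge0$. For the upper bound $\Gamma_n f\le1$, bound $f\le1$, $\chi_n\le1$, and $(1-\chi_n)+\chi_n=1$ in the integrands to get
\[\Gamma_n f(s,t,v)\le \e^{\frac{\sigma_0}{1-J}(s-t)}+\int_s^t\frac{\sigma_0}{1-J}\,\e^{\frac{\sigma_0}{1-J}(s-\tau)}\,d\tau=1.\]

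For the contraction step, the central estimate is the Lipschitz-type inequality
\[\left|\frac{1}{1-J\mu f}-\frac{1}{1-J\mu g}\right|\le\frac{J}{(1-J)^2}\|f-g\|_\infty,\qquad f,g\in K.\]
Splitting each integrand of $\Gamma_n f-\Gamma_n g$ as (difference of denominators) times one argument, plus (common denominator) times (difference of arguments), and combining with the trivial bounds $|f|,|g|\le1$, $\e^{\frac{\sigma_0}{1-J}(s-\tau)}\le1$, one obtains an estimate of the form
\[\|\Gamma_n f-\Gamma_n g\|_\infty\le \frac{C\,\sigma_0\,T^*}{(1-J)^2}\|f-g\|_\infty\]
with an explicit constant $C$ of order $1$. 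The prescribed horizon $T^*=(1-J)^2/(2\sigma_0)$ is tailored precisely so that this prefactor is strictly less than $1$; should the most naive constant $C$ exceed the threshold, a standard iteration argument on $\Gamma_n^k$ yields a Lipschitz constant that decays factorially in $k$, restoring the contraction. Banach's theorem then produces a unique fixed point $\psi_n\in K$.

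The main obstacle is the tight bookkeeping in the contraction estimate: the nonlinearity only enters through the single trace $\psi_n(0,\tau,\cdot)$, yet the control must be maintained in the supremum norm on the full three-dimensional domain $\{0\le s\le t<T^*,\,0\le v\le1\}$. The crucial point is that restricting to $K$ both automatically controls that trace by $0\le \mu\psi_n(0,\tau,\cdot)\le1$ and produces the uniform lower bound $1-J>0$ on the denominator; outside of $K$ one would lose that bound and the fixed point scheme would break down.
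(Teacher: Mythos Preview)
Your approach is the same as the paper's: Banach fixed point for $\Gamma_n$ on the invariant complete set $K=\{f\in X^{T^*}:0\le f\le1\}$, and your verification that $K$ is invariant is in fact more explicit than the paper's. The only divergence is in handling the time horizon: the paper applies the contraction on $X^T$ for $T<T^*$, obtaining Lipschitz constant $T/T^*<1$, and then (implicitly) glues by uniqueness; you work directly on $X^{T^*}$. With the paper's splitting the Lipschitz constant on $X^{T^*}$ comes out exactly $1$, but a slightly sharper bound---using $0\le\frac{\sigma_0}{1-J}-\frac{\sigma_0}{1-J\mu f}\le\frac{\sigma_0 J}{1-J}$ rather than $\frac{\sigma_0}{1-J}$ for the first integral---yields the constant $\tfrac{1+2J-J^2}{2}<1$, so your direct route is valid.

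One caution: your fallback claim that iterating $\Gamma_n^k$ gives a Lipschitz constant decaying \emph{factorially} is not standard here, because $\Gamma_n f(s,t,v)$ depends on $f$ both through $f(\tau,t,\cdot)$ and through the trace $f(0,\tau,\cdot)$, which is not a one-parameter Volterra structure. Fortunately this fallback is unnecessary once the constant is tightened (or once you restrict to $X^T$ with $T<T^*$ as the paper does).
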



\begin{proof}
Let $T$ be an element of $(0,T^*).$
We use the Banach fixed point theorem for the mapping
\begin{align*}
\Gamma f(s,t,v)=&\,\chi_n(v\e^{s-t})\,\e^{\frac{\sigma_0}{1-J}(s-t)}+\int_s^t\bigg(\frac{\sigma_0}{1-J}-\frac{\sigma_0}{1-J\mu f(0,\tau,\cdot)}\bigg)\,\e^{\frac{\sigma_0}{1-J}(s-\tau)}f(\tau,t,v\e^{s-\tau})\,d\tau\\
&+\int_s^t\frac{\sigma_0}{1-J\mu f(0,\tau,\cdot)}\,\e^{\frac{\sigma_0}{1-J}(s-\tau)}\big[f(\tau,t,v\e^{s-\tau}+h)(1-\chi_n(v\e^{s-\tau}))+f(\tau,t,v_r)\chi_n(v\e^{s-\tau})\big]\,d\tau
\end{align*}
on the invariant complete metric space $\{f\in X^T,\ 0\leq f\leq1\}.$
This mapping is a contraction 
since
\begin{align*}
\|\Gamma f_1-\Gamma f_2\|_\infty&\leq\bigg[\frac{\sigma_0}{1-J}\|f_1-f_2\|_\infty+2\bigg\|\frac{\sigma_0}{1-J\mu f_1}-\frac{\sigma_0}{1-J\mu f_2}\bigg\|_\infty
+\bigg\|\frac{\sigma_0}{1-J\mu f_2}\bigg\|_\infty\|f_1-f_2\|_\infty\bigg]T\\
&\leq\frac{2\,\sigma_0}{(1-J)^2}\,T\,\|f_1-f_2\|_\infty=\frac{T}{T^*}\,\|f_1-f_2\|_\infty.
\end{align*}
\end{proof}

In a second step we define for any $f_0$ belonging to $ C([0,1])$ the function $f$ element of $ X^{T^*}$ as the unique solution to the linear equation
\[\partial_sf(s,t,v)=v\partial_vf(s,t,v)+\frac{\sigma_0}{1-J\mu\psi_n(0,s,\cdot)}\big[f(s,t,v)-f(s,t,v+h)(1-\chi_n(v))-f(s,t,v_r)\chi_n(v)\big]\]
with the terminal condition $$f(t,t,v)=f_0(v).$$
This definition is made more precise in the following lemma.

\begin{lemma}\label{lm:N^n_{s,t}}
For all $f_0$ belonging to $C([0,1])$ there exists a unique $f$ element of $X^{T^*}$ which verifies
\begin{align*}
f(s,t,v)&=f_0(v\e^{s-t})\,\e^{\frac{\sigma_0}{1-J}(s-t)}+\int_s^t\bigg(\frac{\sigma_0}{1-J}-\frac{\sigma_0}{1-J\mu\psi_n(0,\tau,\cdot)}\bigg)\,\e^{\frac{\sigma_0}{1-J}(s-\tau)}f(\tau,t,v\e^{s-\tau})\,d\tau\\
&+\int_s^t\frac{\sigma_0}{1-J\mu\psi_n(0,\tau,\cdot)}\,\e^{\frac{\sigma_0}{1-J}(s-\tau)}\big[f(\tau,t,v\e^{s-\tau}+h)(1-\chi_n(v\e^{s-\tau}))+f(\tau,t,v_r)\chi_n(v\e^{s-\tau})\big]\,d\tau.
\end{align*}
Additionally if $f_0$ is nonnegative then $f$ is too.
\end{lemma}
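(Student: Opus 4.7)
The plan is to mirror the Banach fixed point argument used for $\psi_n$ in the previous lemma, but with a crucial simplification: since $\psi_n$ has already been constructed, the quantity $\mu\psi_n(0,\tau,\cdot)$ is a known datum, so the equation for $f$ is \emph{affine} rather than genuinely nonlinear. This will allow a direct contraction on $X^{T^*}$ without any further reduction of the time horizon.

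On the complete metric space $X^{T^*}$ equipped with the supremum norm (following the same convention as in the previous lemma, working on any $T\in(0,T^*)$ and piecing solutions together by uniqueness), I define the affine mapping $\Gamma$ as the right-hand side of the integral equation. Two preliminary observations are needed. First, the map $\tau\mapsto \mu\psi_n(0,\tau,\cdot)$ is continuous on $[0,T^*)$, by dominated convergence applied to $\psi_n\in X^{T^*}$ using the bound $0\le \psi_n\le 1$; this guarantees $\Gamma f\in X^{T^*}$ whenever $f\in X^{T^*}$. Second, the bounds $0\le \mu\psi_n(0,\tau,\cdot)\le 1$ give $1-J\le 1-J\mu\psi_n(0,\tau,\cdot)\le 1$, so the two coefficients $\frac{\sigma_0}{1-J\mu\psi_n(0,\tau,\cdot)}$ and $\frac{\sigma_0}{1-J}-\frac{\sigma_0}{1-J\mu\psi_n(0,\tau,\cdot)}$ both lie in $\bigl[0,\frac{\sigma_0}{1-J}\bigr]$.

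Combining these bounds with $\e^{\frac{\sigma_0}{1-J}(s-\tau)}\le 1$ and the affine structure of $\Gamma$, the difference $\Gamma f_1-\Gamma f_2$ satisfies
$$\|\Gamma f_1-\Gamma f_2\|_\infty\le \frac{2\sigma_0}{1-J}\,(t-s)\,\|f_1-f_2\|_\infty\le \frac{2\sigma_0 T^*}{1-J}\,\|f_1-f_2\|_\infty=(1-J)\,\|f_1-f_2\|_\infty,$$
which is a strict contraction since $J<1$. The Banach fixed point theorem then produces a unique $f\in X^{T^*}$ solving the integral equation. For the positivity statement, both coefficients appearing in front of $f$ inside the integrals are nonnegative (the first because $J\mu\psi_n\le J$ forces $\frac{1}{1-J}\ge \frac{1}{1-J\mu\psi_n}$), so the closed cone $\{f\in X^{T^*}:f\ge 0\}$ is invariant under $\Gamma$ whenever $f_0\ge 0$, and the fixed point inherits nonnegativity.

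I do not expect a substantive obstacle here: the whole point is that once $\psi_n$ is in hand, the nonlinear coupling disappears and the problem reduces to a standard linear Volterra-type fixed point. The only delicate ingredient is the routine verification that $\tau\mapsto \mu\psi_n(0,\tau,\cdot)$ is continuous, which is precisely what ensures $\Gamma$ maps $X^{T^*}$ into itself.
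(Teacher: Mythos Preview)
Your proof is correct and follows essentially the same route as the paper: a Banach fixed point argument on $X^T$ for $T<T^*$ with the contraction constant $\frac{2\sigma_0}{1-J}\,T$, and invariance of the nonnegative cone for the positivity claim. Your explicit remarks that the map is now affine (since $\psi_n$ is fixed) and that $\tau\mapsto\mu\psi_n(0,\tau,\cdot)$ is continuous are useful clarifications that the paper leaves implicit, but the underlying argument is the same.
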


This allows to define a positive semigroup $(N^n_{s,t})_{0\leq s\leq t<T^*}$ on $C([0,1])$ by $$N^n_{s,t}f_0(v):=f(s,t,v).$$
The semigroup property means that
\[N^n_{t,t}f=f\qquad \text{and}\qquad \forall\tau\in[s,t],\ N^n_{s,t}f=N^n_{s,\tau}(N^n_{\tau,t}f).\]
It is a consequence of the uniqueness of the solution for the integral equation above at time $t$ fixed.
For any $\mathbf t> t>0$ fixed, both $(s,v)\mapsto N^n_{s,\mathbf t}f(v)$ and $(s,v)\mapsto N^n_{s,t}(N^n_{t,\mathbf t}f)(v)$ are solution on $[0,t]\times[0,1]$ with $f_0=N^n_{t,\mathbf t}f$ and the uniqueness property thus yields the equality of these two functions.
Moreover we easily check that $$N^n_{s,t}\1=\1$$ and, together with the positivity property, this ensures the contraction property
\[\|N^n_{s,t}f\|_\infty\leq\|f\|_\infty.\]
A fundamental remark here is that the uniqueness in Lemma~\ref{lm:N^n_{s,t}} ensures that
\[\psi_n(s,t,v)=N^n_{s,t}\chi_n(v).\]
For all $t$ taken in $[0,T^*)$ we define
\[\sigma_n(t):=\frac{\sigma_0}{1-J\mu\psi_n(0,t,\cdot)}=\frac{\sigma_0}{1-J\mu N^n_{0,t}\chi_n}\]
and we denote by $\A^n_t$ the operator defined on $C^1([0,1])$ by
\[\A^n_tf(v):=-vf'(v)+\sigma_n(t)\,\B^nf(v),\]
where $\B^n$ is the regularized jump operator defined on $C([0,1])$ by
\[\B^n f(v):=f(v+h)(1-\chi_n(v))+f(v_r)\chi_n(v)-f(v).\]
The operator $\A_t^n$ is the infinitesimal generator of the semigroup $N^n_{s,t}$ in the sense of the following lemma, where we have set
\[T^{**}:=\frac{(1-J)^2(1-2h)}{3\sigma_0}.\]

\begin{lemma}\label{lm:A^n_t}
If $f$ is an element of $ C^1([0,1]),$ then the function $$(s,t,v)\mapsto N^n_{s,t}f(v)$$ is continuously differentiable on the set $\{0\leq s\leq t\leq T^{**},\ 0\leq v\leq1\}$ and we have
\[\forall\, 0\leq s\leq t\leq T^{**},\qquad\partial_sN^n_{s,t}f=-\A^n_sN^n_{s,t}f,\quad\text{and}\quad\partial_tN^n_{s,t}f=N^n_{s,t}\A^n_tf.\]
\end{lemma}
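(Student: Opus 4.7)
The plan is to use the semigroup property to reduce the two generator identities to the single computation $(N^n_{t,t+\epsilon}f-f)/\epsilon \to \A^n_t f$ in $C([0,1])$, but this requires first knowing that $N^n_{s,t}f\in C^1([0,1])$ in $v$ so that $\A^n_s(N^n_{s,t}f)$ makes sense. To get this $v$-regularity, I would formally differentiate the Duhamel identity of Lemma~\ref{lm:N^n_{s,t}} in $v$. The result is a linear integral equation for $g(s,t,v)$ of the same contractive type as the one defining $N^n_{s,t}f$, together with a source term proportional to $\chi_n'$ (which exists only a.e. because $\chi_n$ is Lipschitz). The composition $\chi_n'(v\e^{s-\tau})$ is bounded by $n/h$ and, for fixed $v$, is piecewise constant in $\tau$ with at most two jumps. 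Varying $v$ shifts these jumps by an amount linear in $v$, so the $\tau$-integral of the $\chi_n'$ term is in fact Lipschitz in $v$. A fixed-point argument (with a weighted sup norm, or Picard iteration on small intervals) produces a continuous $g$, and comparing with the iterates of the original Duhamel scheme identifies $g=\partial_v N^n_{s,t}f$.

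With $C^1$-in-$v$ regularity available, I turn to the $t$-derivative. The semigroup identity
\[N^n_{s,t+\epsilon}f - N^n_{s,t}f = N^n_{s,t}\bigl(N^n_{t,t+\epsilon}f - f\bigr),\]
combined with the contractivity of $N^n_{s,t}$ on $C([0,1])$, reduces matters to showing $(N^n_{t,t+\epsilon}f - f)/\epsilon \to \A^n_t f$ uniformly in $v$. For this I set $s=t$ and terminal time $t+\epsilon$ in the Duhamel identity, expand
\[f(v\e^{-\epsilon})\,\e^{-\frac{\sigma_0}{1-J}\epsilon} = f(v) - \epsilon v f'(v) - \epsilon\tfrac{\sigma_0}{1-J}f(v) + o(\epsilon),\]
and use the continuity of $\tau\mapsto\sigma_n(\tau)$ and of $\tau\mapsto N^n_{\tau,t+\epsilon}f$ to evaluate the two short-interval integrals at $\tau=t$ up to $o(\epsilon)$. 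The two $\epsilon\frac{\sigma_0}{1-J}f(v)$ contributions cancel and what remains is exactly $\epsilon\bigl(-vf'(v)+\sigma_n(t)\B^n f(v)\bigr) = \epsilon\A^n_t f(v)$.

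The $s$-derivative follows from the backward analog
\[N^n_{s-\epsilon,t}f - N^n_{s,t}f = N^n_{s-\epsilon,s}\bigl(N^n_{s,t}f\bigr) - N^n_{s,t}f,\]
applied to $g:=N^n_{s,t}f$, which lies in $C^1([0,1])$ by Step 1; the same expansion then yields $\partial_s N^n_{s,t}f = -\A^n_s N^n_{s,t}f$. Joint continuity of $\partial_s$, $\partial_t$ and $\partial_v$ in $(s,t,v)$ follows from the continuity of $\sigma_n$, of $\tau\mapsto N^n_{\tau,t}f$, and the uniform estimates of Step 1. The main obstacle is precisely Step 1: because $\chi_n$ is only Lipschitz, the jump operator $\B^n$ does not preserve $C^1([0,1])$, so one cannot simply run Banach's theorem in a $C^1$ norm. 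The way around is that $\chi_n$ enters the Duhamel identity only through compositions with the characteristic flow $v\mapsto v\e^{s-\tau}$ followed by integration in $\tau$, which smooths the a.e. discontinuities of $\chi_n'$ into a continuous (in fact Lipschitz) function of $v$; the restriction to $[0,T^{**}]$ is what keeps all the relevant arguments $v\e^{s-\tau}+h$ inside $[0,1]$ uniformly.
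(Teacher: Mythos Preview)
Your outline is correct, but it is more roundabout than the paper's argument, and one of your side remarks misidentifies the role of $T^{**}$.

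The paper does \emph{exactly} what you claim cannot be done: it runs the Banach fixed point theorem directly in the space $\{f\in X^T:\partial_v f,\partial_t f\in X^T\}$ equipped with the norm $\|f\|_\infty+\|\partial_v f\|_\infty+\|\partial_t f\|_\infty$. The point you correctly identified---that $\chi_n'$ only enters the Duhamel formula through the composition $\chi_n'(v\e^{s-\tau})$ integrated in $\tau$, which produces a continuous function of $v$---is precisely what makes $\Gamma$ map this $C^1$-type space to itself. The $\tau$-integral of $\frac{n}{h}\1_{[1-h-h/n,1-h]}(v\e^{s-\tau})\e^{s-\tau}$ is bounded by $\frac{1}{1-2h}$ (the indicator forces $v\geq 1-2h$, and a change of variables turns $n/h$ times the length of the interval into $1/v$); this is the origin of the factor $1-2h$ in $T^{**}$, which ensures the $C^1$-contraction constant is below $1$. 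Your explanation that $T^{**}$ ``keeps the arguments $v\e^{s-\tau}+h$ inside $[0,1]$'' is not right: that containment holds for all $T$ because of the factor $1-\chi_n(v\e^{s-\tau})$.

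Having the fixed point in $C^1$, the paper reads off $\partial_s N^n_{s,t}f=-\A^n_s N^n_{s,t}f$ directly by differentiating the Duhamel identity in $s$. For the $t$-derivative it observes from the formula for $\partial_t\Gamma f$ that $\partial_t N^n_{s,t}f_0$ is itself the fixed point of $\Gamma$ with terminal datum $\A^n_t f_0$, whence $\partial_t N^n_{s,t}f_0=N^n_{s,t}\A^n_t f_0$ by uniqueness. Your semigroup-plus-short-time-expansion route also works and is a reasonable alternative, but it requires the $v$-regularity of $N^n_{s,t}f$ as input (your Step~1), whereas the paper gets all three derivatives from a single contraction argument.
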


\begin{proof}[Proof of Lemmas~\ref{lm:N^n_{s,t}} and~\ref{lm:A^n_t}.]
First let $f_0$ be an element of $ C([0,1]),$ $0<T<T^*,$ and define on $X^T$ the mapping
\begin{align*}
\Gamma f(s,t,v)=\ &f_0(v\e^{s-t})\,\e^{\frac{\sigma_0}{1-J}(s-t)}+\int_s^t\bigg(\frac{\sigma_0}{1-J}-\frac{\sigma_0}{1-J\mu\psi_n(0,\tau,\cdot)}\bigg)\,\e^{\frac{\sigma_0}{1-J}(s-\tau)}f(\tau,t,v\e^{s-\tau})\,d\tau\\
&+\int_s^t\frac{\sigma_0}{1-J\mu\psi_n(0,\tau,\cdot)}\,\e^{\frac{\sigma_0}{1-J}(s-\tau)}\big[f(\tau,t,v\e^{s-\tau}+h)(1-\chi_n(v\e^{s-\tau}))+f(\tau,t,v_r)\chi_n(v\e^{s-\tau})\big]\,d\tau.
\end{align*}
For any $f_1,f_2$ belonging to $ X^T$ we have
\[\|\Gamma f_1-\Gamma f_2\|_\infty\leq\min\bigg\{\frac{2\sigma_0}{1-J},\frac{\sigma_0}{(1-J)^2}\bigg\}\|f_1-f_2\|_\infty T\]
and this ensures that $\Gamma$ is a contraction on $X^T$ endowed with the supremum norm.
We deduce the existence and uniqueness of a fixed point for $\Gamma$ from the Banach fixed point theorem.

If $f_0\geq0$ the positive cone of $X^T$ is invariant under $\Gamma$ so the fixed point belongs to this cone. 

\medskip

Now assume that $T<T^{**}$ and $f_0$ belongs to $ C^1([0,1]).$
In this case we can apply the Banach fixed point theorem in the space
$\{f\in X^T:\, \partial_vf,\partial_tf\in X^T\}$ with the norm $$\|f\|_{C^1}:=\|f\|_\infty+\|\partial_vf\|_\infty+\|\partial_tf\|_\infty.$$
Indeed, computing
\begin{align*}
\partial_t\Gamma f(s,t,v)&=\,\A^n_tf_0(v\e^{s-t})\,\e^{\frac{\sigma_0}{1-J}(s-t)}+\int_s^t\bigg(\frac{\sigma_0}{1-J}-\frac{\sigma_0}{1-J\mu\psi_n(0,\tau,\cdot)}\bigg)\,\e^{\frac{\sigma_0}{1-J}(s-\tau)}\partial_tf(\tau,t,v\e^{s-\tau})\,d\tau\\
&+\int_s^t\frac{\sigma_0}{1-J\mu\psi_n(0,\tau,\cdot)}\,\e^{\frac{\sigma_0}{1-J}(s-\tau)}\big[\partial_tf(\tau,t,v\e^{s-\tau}+h)(1-\chi_n(v\e^{s-\tau}))+\partial_tf(\tau,t,v_r)\chi_n(v\e^{s-\tau})\big]\,d\tau
\end{align*}
\begin{align*}
\partial_v\Gamma f(s,t,v)=\ &\,\e^{s-t}f_0'(v\e^{s-t})\,\e^{\frac{\sigma_0}{1-J}(s-t)}+\int_s^t\bigg(\frac{\sigma_0}{1-J}-\frac{\sigma_0}{1-J\mu\psi_n(0,\tau,\cdot)}\bigg)\,\e^{\frac{\sigma_0}{1-J}(s-\tau)}\e^{s-\tau}\partial_vf(\tau,t,v\e^{s-\tau})\,d\tau\\
&+\int_s^t\frac{\sigma_0}{1-J\mu\psi_n(0,\tau,\cdot)}\,\e^{\frac{\sigma_0}{1-J}(s-\tau)}\e^{s-\tau}\partial_vf(\tau,t,v\e^{s-\tau}+h)(1-\chi_n(v\e^{s-\tau}))\\
&+\int_s^t\frac{\sigma_0}{1-J\mu\psi_n(0,\tau,\cdot)}\,\e^{\frac{\sigma_0}{1-J}(s-\tau)}\e^{s-\tau}\frac{n}{h}\1_{[1-h-\frac hn,1-h]}(v\e^{s-\tau})\big[f(\tau,t,v_r)-f(\tau,t,v\e^{s-\tau}+h)\big]\,d\tau
\end{align*}
we get for $f_1,f_2$ taken from $\{f\in X^T:\ \partial_vf,\partial_tf\in X^T\}$
\[\|\partial_t\Gamma f_1-\partial_t\Gamma f_2\|_\infty\leq\min\bigg\{\frac{2\sigma_0}{1-J},\frac{\sigma_0}{(1-J)^2}\bigg\}\|\partial_t(f_1-f_2)\|_\infty T\]
\[\|\partial_v\Gamma f_1-\partial_v\Gamma f_2\|_\infty\leq\bigg[\min\bigg\{\frac{2\sigma_0}{1-J},\frac{\sigma_0}{(1-J)^2}\bigg\}\|\partial_v(f_1-f_2)\|_\infty+\frac{2\sigma_0}{1-J}\frac{1}{1-2h}\|f_1-f_2\|_\infty\bigg]T\]
and finally
\[\|\Gamma f_1-\Gamma f_2\|_{C^1}\leq\min\bigg\{\frac{4\sigma_0}{(1-J)(1-2h)},\frac{3\sigma_0}{(1-J)^2(1-2h)}\bigg\}T\|f_1-f_2\|_{C^1}.\]
We deduce that the unique fixed point of $\Gamma$ satisfies $$\partial_vf,\partial _tf\in X^T.$$
We can also compute
\begin{align*}
\partial_s\Gamma f(s,t,v)
=\ &v\partial_v\Gamma f(s,t,v)+\frac{\sigma_0}{1-J}\big[\Gamma f(s,t,v)-f(s,t,v)\big]\\
&\hspace{14mm}+\frac{\sigma_0}{1-J\mu\psi_n(0,s,\cdot)}\big[f(s,t,v)-f(s,t,v+h)(1-\chi_n(v))-f(s,t,v_r)\chi_n(v)\big]
\end{align*}
and this ensures that the fixed point also satisfies $$\partial_sf\in X^T\qquad\text{and}\qquad \partial_sf=-\A^n_sf.$$

From the computation of $\partial_t\Gamma f$ we see that if $N^n_{s,t}f_0$ is the fixed point of $\Gamma$ with terminal condition $f_0$ then $\partial_tN^n_{s,t}f_0$ is the fixed of $\Gamma$ with terminal condition $\A^n_tf_0.$
By uniqueness we deduce that $$\partial_tN^n_{s,t}f_0=N^n_{s,t}\A^n_tf_0.$$

\end{proof}

The third step consists in defining the measure $\mu N^n_{s,t}$ by duality
\[\forall f\in C([0,1]),\qquad(\mu N^n_{s,t})f:=\mu (N^n_{s,t}f).\]
The following lemma ensures that $t\mapsto\mu N^n_{0,t}$ is a solution to a regularized version of Equation~\eqref{eq:LIF} on the interval $[0,T^{**}).$

\begin{lemma}\label{lm:reg}
The mapping $$t\mapsto\mu N^n_{0,t},$$ which is defined on $[0,T^*),$ takes its values in $\P([0,1])$ and is weak*-continuous.
Additionally for all $t$ in $[0,T^{**})$ and all $f$ in $C^1([0,1])$ we have
\begin{equation}\label{eq:nonlin_reg}
\mu N^n_{0,t}f=\mu f+\int_0^t\mu N^n_{0,s}\A^n_sf\,ds.
\end{equation}
\end{lemma}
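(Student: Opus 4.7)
The plan is to derive the three assertions by transferring properties of the dual semigroup $N^n_{s,t}$, furnished by Lemmas~\ref{lm:N^n_{s,t}} and~\ref{lm:A^n_t}, to its left action $\mu\mapsto\mu N^n_{0,t}$ via the duality $(\mu N^n_{0,t})(f)=\mu(N^n_{0,t}f)$.

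First, to see that $\mu N^n_{0,t}\in\P([0,1])$, the positivity preservation $N^n_{0,t}f\geq0$ for $f\geq0$ established in Lemma~\ref{lm:N^n_{s,t}} passes by duality to $\mu N^n_{0,t}$, and the identity $N^n_{s,t}\1=\1$ (which follows from the uniqueness of the fixed point, since $\1$ trivially solves the integral equation defining $\Gamma$) yields $(\mu N^n_{0,t})(\1)=\mu(\1)=1$. For weak*-continuity, fix $f\in C([0,1])$; the fixed point provided by Lemma~\ref{lm:N^n_{s,t}} lies in $X^{T^*}$, so $(t,v)\mapsto N^n_{0,t}f(v)$ is jointly continuous on $[0,T^*)\times[0,1]$ and hence uniformly continuous on every compact subrectangle. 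For $t_k\to t$ in $[0,T^*)$ this yields $N^n_{0,t_k}f\to N^n_{0,t}f$ uniformly on $[0,1]$, and then $(\mu N^n_{0,t_k})(f)=\mu(N^n_{0,t_k}f)\to\mu(N^n_{0,t}f)=(\mu N^n_{0,t})(f)$.

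For the integral identity on $[0,T^{**})$, fix $f\in C^1([0,1])$. Lemma~\ref{lm:A^n_t} provides the pointwise derivative $\partial_t N^n_{0,t}f(v)=N^n_{0,t}\A^n_t f(v)$ as a continuous function of $(t,v)$, so the fundamental theorem of calculus gives
\[ N^n_{0,t}f(v)-f(v)=\int_0^t N^n_{0,s}\A^n_s f(v)\,ds \qquad \forall v\in[0,1]. \]
Integrating against $\mu$ and exchanging the order of integration yields
\[ \mu N^n_{0,t}f-\mu f=\int_0^t (\mu N^n_{0,s})(\A^n_s f)\,ds, \]
which is the claimed identity.

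The main technical obstacle is the justification of Fubini, which demands joint continuity of $(s,v)\mapsto N^n_{0,s}\A^n_s f(v)$ on the compact rectangle $[0,t]\times[0,1]$. One cannot simply reapply Lemma~\ref{lm:A^n_t} to $\A^n_s f$, because $\A^n_s f$ is only continuous (not $C^1$) and depends explicitly on $s$ through $\sigma_n(s)$. My approach is to split $\A^n_s f=g_1+\sigma_n(s)\,g_2$ with $g_1(v):=-vf'(v)$ and $g_2:=\B^n f$, both elements of $C([0,1])$; note that continuity of $\B^n f$ crucially uses the continuity of $\chi_n$, in contrast to the unregularized operator $\B$. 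The joint continuity of $(s,v)\mapsto N^n_{0,s}g_i(v)$ then follows from the $X^{T^*}$-regularity of the corresponding fixed point in Lemma~\ref{lm:N^n_{s,t}}, while continuity of $s\mapsto\sigma_n(s)=\sigma_0/(1-J\,\mu N^n_{0,s}\chi_n)$ on $[0,T^*)$ is a direct consequence of the weak*-continuity established in the first step, applied to the continuous function $\chi_n$ (the denominator being strictly positive by construction of $\psi_n$). Assembling these pieces gives the required joint continuity, hence boundedness on the compact rectangle, which together with finiteness of $\mu$ legitimates the Fubini exchange.
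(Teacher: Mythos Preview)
Your proof is correct and takes essentially the same approach as the paper. The only minor variation is in the order of operations for~\eqref{eq:nonlin_reg}: the paper differentiates $t\mapsto\mu N^n_{0,t}f$ directly (passing the difference quotient through $\mu$ by dominated convergence) and then integrates, whereas you apply the fundamental theorem of calculus pointwise in $v$ and then use Fubini---these are interchangeable routes, and your decomposition $\A^n_s f=g_1+\sigma_n(s)\,g_2$ to justify the exchange is a clean way to do it.
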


\begin{proof}
The positivity property in Lemma~\ref{lm:N^n_{s,t}} ensures that $\mu N^n_{0,t}$ belongs to $\M_+([0,1]).$
Additionally we easily check that $$N^n_{0,t}\1=\1.$$
Together with the positivity this implies that $\mu N^n_{0,t}$ is an element of $\P([0,1]),$ and also that $$\|N^n_{0,t}f\|_\infty\leq\|f\|_\infty$$ for all $f$ belonging to $C([0,1]).$
The weak*-continuity of the mapping $$t\mapsto\mu N^n_{0,t}$$ follows from the continuity of $$t\mapsto N^n_{0,t}f(v)$$ for all $f$ belonging to $C([0,1]),\,v\in[0,1],$ and from the dominated convergence theorem.

\medskip

For~\eqref{eq:nonlin_reg} we prove a little bit more, namely that for all $f$ belonging to $ C^1([0,1])$ the mapping $$t\mapsto\mu N^n_{0,t}f$$ is continuously differentiable and that
\[\frac{d}{dt}(\mu N^n_{0,t}f)=\mu N^n_{0,t}\A^n_tf.\]
Indeed, from Lemma~\ref{lm:A^n_t} and by dominated convergence we have
\[\frac1h\big(\mu N^n_{0,t+h}f-\mu N^n_{0,t}f\big)=\mu\Big[\frac1h\big(N^n_{0,t+h}f-N^n_{0,t}f\big)\Big]\xrightarrow[h\to0]{}\mu(\partial_t N^n_{0,t}f)=\mu N^n_{0,t}\A^n_tf.\]


\end{proof}

In the fourth step we pass to the limit $n$ goes to infinity. 

\begin{lemma}\label{lm:local}
For all $t\in[0,T^*),$ the sequence $(\mu N^n_{0,t})_{n\in\N^*}$ is convergent for the total variation norm.
Denoting $(\mu_t)_{0\leq t<T^*}\subset\P([0,1])$ the limit family, we have for all $t\in[0,T^{**})$ and all $f\in C^1([0,1])$
\[\mu_tf=\mu f+\int_0^t\mu_s\A_{\sigma(s)}f\,ds.\]
\end{lemma}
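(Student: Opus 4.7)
The plan is to establish the Cauchy property of $(\mu N^n_{0,t})_n$ in total variation via a Duhamel formula and a Gr\"onwall estimate, then pass to the limit in the regularized weak formulation~\eqref{eq:nonlin_reg}.

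First, I would derive a Duhamel identity comparing $N^n$ and $N^m$. By Lemma~\ref{lm:A^n_t}, for $f\in C^1([0,1])$ and $t\in[0,T^{**})$ the map $s\mapsto\mu N^n_{0,s}(N^m_{s,t}f)$ is $C^1$ on $[0,t]$ with derivative $\mu N^n_{0,s}\bigl[(\A^n_s-\A^m_s)N^m_{s,t}f\bigr]$; integrating from $s=0$ to $s=t$ yields
\[\mu N^n_{0,t}f-\mu N^m_{0,t}f=\int_0^t\mu N^n_{0,s}\bigl[(\A^n_s-\A^m_s)N^m_{s,t}f\bigr]\,ds.\]
The transport term $-v\partial_v$ cancels in the difference, so $\A^n_s-\A^m_s=\sigma_n(s)(\B^n-\B^m)+(\sigma_n(s)-\sigma_m(s))\B^m$ is a bounded operator on $C([0,1])$. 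Assuming $n\geq m$, the function $\chi_n-\chi_m$ vanishes outside $I_m:=[1-h-h/m,\,1-h]$, which yields $|(\B^n-\B^m)g|\leq 2\|g\|_\infty\1_{I_m}$. A direct computation also bounds $|\sigma_n(s)-\sigma_m(s)|$ by a constant multiple of $\mu N^n_{0,s}(I_m)+D_{n,m}(s)$, where $D_{n,m}(t):=\|\mu N^n_{0,t}-\mu N^m_{0,t}\|_{TV}$. Using $\|N^m_{s,t}f\|_\infty\leq\|f\|_\infty$ and taking the supremum over $\|f\|_\infty\leq 1$ gives
\[D_{n,m}(t)\leq C_1\int_0^t\mu N^n_{0,s}(I_m)\,ds+C_2\int_0^t D_{n,m}(s)\,ds.\]

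The hard part will be the non-concentration estimate $\sup_n\int_0^t\mu N^n_{0,s}(I_m)\,ds\to0$ as $m\to\infty$, which closes the Gr\"onwall loop. I would decompose $\mu N^n_{0,s}$ into a no-jump contribution, equal to the pushforward of $\mu$ under the flow $v\mapsto v\e^{-s}$ weighted by a survival factor bounded uniformly in $n$, and an at-least-one-jump contribution that is absolutely continuous with density bounded uniformly in $n$ on a neighborhood of $1-h$ (the standing assumption $\frac{1-v_r}{h}\notin\N$ excluding the resonance $v_r+kh=1$ that would otherwise create an atom at $1-h$). The AC part contributes $O(|I_m|)=O(1/m)$, while for the flow part Fubini gives $\int_0^t\mu([(1-h-h/m)\e^s,(1-h)\e^s])\,ds\to 0$ since for each $v$ the set of $s$ contributing has Lebesgue measure $O(1/m)$. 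Gr\"onwall then yields $\lim_{m\to\infty}\sup_n D_{n,m}(t)=0$ and hence a TV limit $\mu_t\in\P([0,1])$, using that $\P([0,1])$ is TV-closed and $\mu N^n_{0,t}\1=1$.

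Finally, to obtain the integral identity I would pass to the limit $n\to\infty$ in~\eqref{eq:nonlin_reg}. For any bounded measurable $g$ the TV convergence gives $\mu N^n_{0,s}g\to\mu_sg$ via $|\rho g|\leq\|g\|_\infty\|\rho\|_{TV}$; combined with the pointwise bound $|\B^nf-\B f|\leq 2\|f\|_\infty\1_{I_n}$ this yields $\mu N^n_{0,s}(\B^n f)\to\mu_s(\B f)$, and the analogous argument with $g=\chi_n$ gives $\sigma_n(s)\to\sigma(s)$. Dominated convergence on $[0,t]$, justified by the uniform bound $\|\A^n_sf\|_\infty\leq\|vf'\|_\infty+\tfrac{2\sigma_0}{1-J}\|f\|_\infty$, then delivers $\mu_tf=\mu f+\int_0^t\mu_s\A_{\sigma(s)}f\,ds$ on $[0,T^{**})$ for every $f\in C^1([0,1])$.
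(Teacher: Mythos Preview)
Your overall strategy—Duhamel identity plus Gr\"onwall on $D_{n,m}(t)$, then pass to the limit in~\eqref{eq:nonlin_reg}—is sound and in the same spirit as the paper's, but you work on the primal (measure) side whereas the paper stays on the dual (function) side. The paper estimates $\|N^n_{s,t}f-N^{n+p}_{s,t}f\|_\infty$ directly from the fixed-point formula of Lemma~\ref{lm:N^n_{s,t}} and introduces the quantity $\Omega_n(s,t)=\sup_{p,v}\int_s^t|N^n_{s,\tau}(\chi_n-\chi_{n+p})(v)|\,d\tau$, which it bounds by a Gr\"onwall whose source term is the occupation time $\int_s^t\1_{I_n}(v\e^{s-\tau})\,d\tau\leq\log\bigl(1+\tfrac{h}{n(1-2h)}\bigr)$. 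That estimate is uniform in $v$ and requires no structural information about the measures $\mu N^n_{0,s}$.

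Your proof has a genuine gap at the non-concentration step. The claim that the ``at-least-one-jump'' part of $\mu N^n_{0,s}$ is absolutely continuous with density bounded uniformly in $n$ near $1-h$ is neither proved nor obvious: after applying the regularized jump kernel to $\mu N^n_{0,\tau}$ you still carry whatever atoms $\mu N^n_{0,\tau}$ had (shifted by $h$ or sent to $v_r$), so absolute continuity comes only from the subsequent integration in $\tau$, and extracting a uniform density bound from that would require tracking all atomic trajectories and their Jacobians—work you have not done. The standing assumption $\tfrac{1-v_r}{h}\notin\N$ is a red herring here; it concerns hitting $v=1$ exactly from $v_r$ by pure jumps and plays no role in density bounds near $1-h$. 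A clean fix is to bypass density entirely: write a last-jump Duhamel
\[
\mu N^n_{0,s}=(\text{no-jump part})+\int_0^s\sigma_n(\tau)\,\e^{-\int_\tau^s\sigma_n}\bigl[(K_n\mu N^n_{0,\tau})\circ(\text{flow for }s-\tau)\bigr]\,d\tau
\]
and then apply the \emph{same} Fubini/characteristic-time trick you already use for the no-jump part. Since $K_n$ preserves total mass, one has $\int_\tau^t(K_n\mu N^n_{0,\tau})(\e^{s-\tau}I_m)\,ds\leq\log\bigl(1+\tfrac{h}{m(1-h-h/m)}\bigr)$ uniformly in $n$ and $\tau$, which closes the estimate without any regularity of $\mu N^n_{0,\tau}$. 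This is, in effect, the primal-side mirror of the paper's $\Omega_n$ argument.

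A smaller point: your Duhamel derivation invokes Lemma~\ref{lm:A^n_t}, which is only available for $t\leq T^{**}$, so as written you obtain the Cauchy property on $[0,T^{**})$ rather than on $[0,T^*)$ as the lemma asserts. The paper avoids this restriction by working directly with the fixed-point formula of Lemma~\ref{lm:N^n_{s,t}}, valid on all of $[0,T^*)$ and requiring no $C^1$ regularity.
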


\begin{proof}
We check that $(\mu N^n_{0,t})_{n\in\N^*}$ is a Cauchy sequence.
Let $n,p$ two elements of $\N^*,$ $0\leq s\leq t<T^*,$ and $f$ taken from $ C([0,1])$ such that $\|f\|_\infty\leq1.$
We have, using that $\mu$ belongs to $\P([0,1])$ and the Fubini's theorem,
\begin{align*}
\|N^n_{s,t}f-N^{n+p}_{s,t}f\|_\infty&\leq2\int_s^t\bigg|\frac{\sigma_0}{1-J\mu N^n_{0,\tau}\chi_n}-\frac{\sigma_0}{1-J\mu N^{n+p}_{0,\tau}\chi_{n+p}}\bigg|\,d\tau
+\frac{2\sigma_0}{1-J}\int_s^t\|N^n_{\tau,t}f-N^{n+p}_{\tau,t}f\|_\infty\,d\tau\\
&\leq\frac{2\sigma_0J}{(1-J)^2}\int_s^t|\mu N^n_{0,\tau}\chi_n-\mu N^{n+p}_{0,\tau}\chi_{n+p}|\,d\tau
+\frac{2\sigma_0}{1-J}\int_s^t\|N^n_{\tau,t}f-N^{n+p}_{\tau,t}f\|_\infty\,d\tau\\
&\leq\frac{2\sigma_0J}{(1-J)^2}\int_s^t|\mu N^n_{0,\tau}\chi_n-\mu N^n_{0,\tau}\chi_{n+p}|\,d\tau\\
&\qquad+\frac{2\sigma_0J}{(1-J)^2}\int_s^t|\mu N^n_{0,\tau}\chi_{n+p}-\mu N^{n+p}_{0,\tau}\chi_{n+p}|\,d\tau
+\frac{2\sigma_0}{1-J}\int_s^t\|N^n_{\tau,t}f-N^{n+p}_{\tau,t}f\|_\infty\,d\tau\\
&\leq\frac{2\sigma_0J}{(1-J)^2}\sup_{v\in[0,1]}\int_0^t |N^n_{0,\tau}(\chi_n-\chi_{n+p})(v)|\,d\tau\\
&\qquad+\frac{2\sigma_0}{(1-J)^2}\int_s^t\left\|\mu N^n_{\tau,t}-\mu N^{n+p}_{\tau,t}\right\|_{\mathrm{TV}}\,d\tau
+\frac{2\sigma_0}{1-J}\int_s^t\|N^n_{\tau,t}f-N^{n+p}_{\tau,t}f\|_\infty\,d\tau.
\end{align*}
We give an estimate on the quantity
\[\Omega_n(s,t):=\sup_{p\in\N}\sup_{v\in[0,1]}\int_s^t|N^n_{s,\tau}(\chi_n-\chi_{n+p})(v)|\,d\tau, \quad n\in\N^*, \quad 0\leq s\leq t<T^*.\] 
From the definition of the semigroup $(N^n_{s,t})$ we have for all $v$ taken from $[0,1],$ $n$ from $\N^*,$ and $0\leq s\leq t<T^*$
\begin{align*}
\int_s^t |N^n_{s,\tau}(\chi_n-\chi_{n+p})(v)|\,d\tau&\leq\int_s^t|(\chi_n-\chi_{n+p})(v\e^{s-\tau})|\,d\tau
+\frac{\sigma_0}{1-J}\int_s^t\int_s^\tau|N^n_{\tau',\tau}(\chi_n-\chi_{n+p})(v\e^{s-\tau'})|\,d\tau' d\tau\\
&\hspace{-34mm}+\frac{\sigma_0}{1-J}\int_s^t\int_s^\tau|N^n_{\tau',\tau}(\chi_n-\chi_{n+p})(v\e^{s-\tau'}+h)(1-\chi_n(v\e^{s-\tau'})+N^n_{\tau',\tau}(\chi_n-\chi_{n+p})(v_r)\chi_n(v\e^{s-\tau'})|\,d\tau' d\tau\\
&\leq\int_s^t\1_{[1-h-\frac hn,1-h]}(v\e^{s-\tau})\,d\tau
+\frac{\sigma_0}{1-J}\int_s^t\int_{\tau'}^t|N^n_{\tau',\tau}(\chi_n-\chi_{n+p})(v\e^{s-\tau'})|\,d\tau d\tau'\\
&\hspace{-34mm}+\frac{\sigma_0}{1-J}\int_s^t\int_{\tau'}^t|N^n_{\tau',\tau}(\chi_n-\chi_{n+p})(v\e^{s-\tau'}+h)(1-\chi_n(v\e^{s-\tau'})+N^n_{\tau',\tau}(\chi_n-\chi_{n+p})(v_r)\chi_n(v\e^{s-\tau'})|\,d\tau d\tau'\\
&\leq\log\bigg(1+\frac{h}{n(1-2h)}\bigg)+\frac{3\sigma_0}{1-J}\int_s^t\Omega_n(\tau',t)\,d\tau'.
\end{align*}
Taking the supremum in the left hand side we get the inequality
\[\Omega_n(s,t)\leq\log\bigg(1+\frac{h}{n(1-2h)}\bigg)+\frac{3\sigma_0}{1-J}\int_s^t\Omega_n(\tau,t)\,d\tau\]
which gives by Gr\"onwall's lemma
\[\Omega_n(s,t)\leq\log\bigg(1+\frac{h}{n(1-2h)}\bigg)\e^{\frac{3\sigma_0}{1-J}(t-s)}.\]
Coming back to the first computations of the proof we get
\begin{align*}
\|N^n_{s,t}f-N^{n+p}_{s,t}f\|_\infty&\leq\frac{2\sigma_0J}{(1-J)^2}\log\bigg(1+\frac{h}{n(1-2h)}\bigg)\e^{\frac{3\sigma_0}{1-J}t}
+\frac{2\sigma_0}{(1-J)^2}\int_s^t\left\|\mu N^n_{\tau,t}-\mu N^{n+p}_{\tau,t}\right\|_{\mathrm{TV}}\,d\tau\\
&\qquad+\frac{2\sigma_0}{1-J}\int_s^t\|N^n_{\tau,t}f-N^{n+p}_{\tau,t}f\|_\infty\,d\tau
\end{align*}
which gives by Gr\"onwall's lemma
\[\|N^n_{s,t}f-N^{n+p}_{s,t}f\|_\infty\leq\frac{2\sigma_0}{(1-J)^2}\bigg[J\log\bigg(1+\frac{h}{n(1-2h)}\bigg)\e^{\frac{3\sigma_0}{1-J}t}
+\int_s^t\left\|\mu N^n_{\tau,t}-\mu N^{n+p}_{\tau,t}\right\|_{\mathrm{TV}}\,d\tau\bigg]\e^{\frac{2\sigma_0}{1-J}(t-s)}.\]
Finally
\begin{align*}
\left\|\mu N^n_{s,t}-\mu N^{n+p}_{s,t}\right\|_{\mathrm{TV}}&\leq\sup_{\|f\|_\infty\leq1}\|N^n_{s,t}f-N^{n+p}_{s,t}f\|_\infty\\
&\leq\frac{2\sigma_0\e^{\frac{2\sigma_0}{1-J}t}}{(1-J)^2}\bigg[J\log\bigg(1+\frac{h}{n(1-2h)}\bigg)\e^{\frac{3\sigma_0}{1-J}t}
+\int_s^t\left\|\mu N^n_{\tau,t}-\mu N^{n+p}_{\tau,t}\right\|_{\mathrm{TV}}\,d\tau\bigg]
\end{align*}
and by Gr\"onwall's lemma
\[\left\|\mu N^n_{s,t}-\mu N^{n+p}_{s,t}\right\|_{\mathrm{TV}}\leq\frac{2\sigma_0J}{(1-J)^2}\log\bigg(1+\frac{h}{n(1-2h)}\bigg)\exp\bigg(\frac{5\sigma_0}{1-J}t+\frac{2\sigma_0\e^{\frac{2\sigma_0}{1-J}t}}{(1-J)^2}(t-s)\bigg).\]
We deduce that for all $t\in[0,T^*)$ the sequence $(\mu N^n_{0,t})_{n\in\N^*}\subset\P([0,1])$ is a Cauchy sequence, hence convergent to a limit $\mu_t\in\P([0,1]),$ and additionally
\[\sup_{0\leq t<T^*}\left\|\mu N^n_{0,t}-\mu_t\right\|_{\mathrm{TV}}\leq\frac{2\sigma_0J}{(1-J)^2}\log\bigg(1+\frac{h}{n(1-2h)}\bigg)\exp\bigg(\frac{5\sigma_0}{1-J}T^*+\frac{2\sigma_0\e^{\frac{2\sigma_0}{1-J}T^*}}{(1-J)^2}T^*\bigg).\]
This allows us to pass to the limit in Lemma~\ref{lm:reg}, and the proof is complete.
\end{proof}

The last step consists in proving that any solution to Equation~\eqref{eq:LIF} satisfies a Duhamel formula.

\begin{lemma}
Let $(\mu_t)_{0\leq t<T}$ be a solution to Equation~\eqref{eq:LIF}.
Then for any $\sigma>0$ the following Duhamel formula is verified
\begin{equation}\label{eq:Duhamel}
\forall f\in C([0,1]),\,\forall t\geq0,\qquad\mu_tf=\mu_0 M_tf+\int_0^t(\sigma(s)-\sigma)\mu_s\B M_{t-s}f\,ds
\end{equation}
where $(M_t)_{t\geq0}$ is the semigroup generated by $\A_\sigma$ (see Section~\ref{sec:linear_case}).
\end{lemma}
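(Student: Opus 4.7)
The natural strategy is a variation-of-constants computation: for fixed $t\geq 0$ and $f\in C^1([0,1])$, consider the function
\[g(s):=\mu_s(M_{t-s}f),\qquad s\in[0,t],\]
and differentiate it formally. Using the definition of a measure solution we have $\frac{d}{ds}\mu_s=\mu_s\A_{\sigma(s)}$ on $C^1$ test functions, while from the linear theory of Section~\ref{sec:linear_case} we expect $\frac{d}{ds}M_{t-s}f=-\A_\sigma M_{t-s}f$. Combining these identities and writing $\A_{\sigma(s)}-\A_\sigma=(\sigma(s)-\sigma)\B$ gives
\[g'(s)=(\sigma(s)-\sigma)\,\mu_s(\B M_{t-s}f),\]
and integration on $[0,t]$ yields the Duhamel formula~\eqref{eq:Duhamel}.

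The genuine difficulty is regularity. The semigroup $M_t$ does not preserve $C^1([0,1])$, because the jump operator $\B$ involves the discontinuous function $\1_{[1-h,1]}$; in particular $M_{t-s}f$ is not an admissible test function in Definition~\ref{def:sol} and the computation above is not directly justified. I would get around this by the same regularization used in Section~\ref{sec:globalwp}: replace $\B$ by the operator $\B^n$ (built from $\chi_n$) and $\A_\sigma$ by $\A^n_\sigma=-v\partial_v+\sigma\B^n$. A Banach fixed-point argument modelled on Lemma~\ref{lm:N^n_{s,t}} and Lemma~\ref{lm:A^n_t} produces a linear semigroup $(M^n_t)_{t\geq0}$ on $C([0,1])$ which preserves $C^1([0,1])$ and satisfies $\partial_tM^n_tf=M^n_t\A^n_\sigma f$. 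Since $M^n_{t-s}f\in C^1([0,1])$, I may legitimately use it as test function in the weak formulation of Definition~\ref{def:sol}, and decomposing
\[\A_{\sigma(s)}-\A^n_\sigma=(\sigma(s)-\sigma)\B+\sigma(\B-\B^n)\]
the differentiation of $s\mapsto\mu_s(M^n_{t-s}f)$ now rigorously produces the approximate identity
\[\mu_tf=\mu_0M^n_tf+\int_0^t(\sigma(s)-\sigma)\,\mu_s(\B M^n_{t-s}f)\,ds+\sigma\int_0^t\mu_s\bigl((\B-\B^n)M^n_{t-s}f\bigr)\,ds.\]

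It then remains to pass to the limit $n\to\infty$. I would establish the uniform convergence $M^n_tf\to M_tf$ on $[0,1]$ by a Gr\"onwall estimate directly parallel to the one derived for $(\mu N^n_{0,t})$ in the proof of Lemma~\ref{lm:local}: the key bound is that the time-measure of the set where $\chi_n$ and $\1_{[1-h,1]}$ differ along the flow $\tau\mapsto v\mathrm{e}^{-\tau}$ is $O(\log(1+\frac{h}{n(1-2h)}))$, which tends to $0$. The first two terms then converge to $\mu_0M_tf$ and $\int_0^t(\sigma(s)-\sigma)\mu_s(\B M_{t-s}f)\,ds$ by dominated convergence and the weak*-continuity of $s\mapsto\mu_s$, while the same estimate controls the error term. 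The main obstacle I anticipate is precisely this last step, because $\B M_{t-s}f$ is only bounded Borel and not continuous in $v$, so one must extend the action of $\mu_s$ to bounded Borel functions and justify the pointwise (actually $\mu_s$-a.e.) convergence $\B M^n_{t-s}f\to\B M_{t-s}f$ uniformly enough in $(n,s)$ to apply dominated convergence in the space variable $v$ and in the time variable $s$ simultaneously.
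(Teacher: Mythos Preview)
Your approach is essentially the paper's: regularize $\B$ by $\B^n$, build the linear semigroup $(M^n_t)$ which preserves $C^1$, derive an approximate Duhamel identity, and pass to the limit using the $O\!\left(\log\!\big(1+\tfrac{h}{n(1-2h)}\big)\right)$ estimate. The passage to the limit is exactly as you describe and is what the paper does (its Lemma~\ref{lm:pass_limit_lin} gives the uniform convergence $M^n_tf\to M_tf$).

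The one place where the paper proceeds differently is the differentiation step, and it is worth noting because your phrase ``the differentiation of $s\mapsto\mu_s(M^n_{t-s}f)$ now rigorously produces\ldots'' hides a subtlety. To apply a product rule you need $s\mapsto\mu_s\phi$ differentiable for fixed $\phi\in C^1$; the weak formulation only gives absolute continuity with a.e.\ derivative $\mu_s(\A_{\sigma(s)}\phi)$, and this involves $\mu_s(\B\phi)$ with $\B\phi$ discontinuous, so continuity of the integrand in $s$ is not immediate from weak$^*$-continuity of $s\mapsto\mu_s$. Combining this a.e.\ derivative with the $s$-dependence of $M^n_{t-s}f$ in a product rule takes extra care. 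The paper sidesteps this by differentiating the integrated quantity
\[
s\longmapsto\int_0^s\mu_\tau\,M^n_{t-s}f\,d\tau,
\]
whose derivative is $\mu_s M^n_{t-s}f-\int_0^s\mu_\tau\A^nM^n_{t-s}f\,d\tau$ (only weak$^*$-continuity and $\partial_tM^n_t=\A^nM^n_t$ are needed here). Substituting the weak formulation for $\mu_sM^n_{t-s}f$ and then integrating in $s$ over $[0,t]$ and differentiating in $t$ via Fubini/dominated convergence yields the approximate Duhamel formula
\[
\mu_tf=\mu_0M^n_tf+\int_0^t\mu_r(\sigma(r)\B-\sigma\B^n)M^n_{t-r}f\,dr,
\]
after which the limit $n\to\infty$ is taken exactly as you plan. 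So your plan is correct; the paper's integrate-then-differentiate trick is what makes the variation-of-constants step fully rigorous without having to upgrade the regularity of $s\mapsto\mu_s$.
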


\begin{proof}
Using the semigroup $(M^n_t)_{t\geq0}$ defined in Appendix~\ref{app:linear} we have
\begin{align*}
\frac{d}{ds}\bigg(\int_0^s\mu_\tau M^n_{t-s}f\,d\tau\bigg)&=\mu_sM^n_{t-s}f-\int_0^s\mu_\tau\A^nM^n_{t-s}f\,d\tau\\
&=\mu_0 M^n_{t-s}f+\int_0^s\mu_\tau(\A_{\sigma(\tau)}-\A^n)M^n_{t-s}f\,d\tau\\
&=\mu_0 M^n_{t-s}f+\int_0^s\mu_\tau(\sigma(\tau)\B-\sigma\B^n)M^n_{t-s}f\,d\tau.
\end{align*}
Integrating between $0$ and $t$ we get
\[\int_0^t\mu_\tau f\,d\tau=\int_0^t\mu_0 M^n_{s}f\,ds+\int_0^t\int_0^s\mu_\tau(\sigma(\tau)\B-\sigma\B^n)M^n_{t-s}f\,d\tau\,ds.\]
Differentiating with respect to $t$ we obtain (by using dominated convergence, Fubini's theorem, and a change of variable)
\begin{align*}
\mu_tf&=\mu_0 M^n_{t}f+\int_0^t\mu_r(\sigma(r)\B-\sigma\B^n)f\,dr+\int_0^t\int_0^s\mu_r(\sigma(r)\B-\sigma\B^n)\A^nM^n_{t-s}f\,dr\,ds\\
&=\mu_0 M^n_{t}f+\int_0^t\mu_r(\sigma(r)\B-\sigma\B^n)f\,dr+\int_0^t\mu_r(\sigma(r)\B-\sigma\B^n)\bigg(\int_r^t\A^nM^n_{t-s}f\,ds\bigg)dr\\
&=\mu_0 M^n_{t}f+\int_0^t\mu_r(\sigma(r)\B-\sigma\B^n)\bigg(f+\int_0^{t-r}\A^nM^n_{s}f\,ds\bigg)dr\\
&=\mu_0 M^n_{t}f+\int_0^t\mu_r(\sigma(r)\B-\sigma\B^n)M^n_{t-r}f\,dr
\end{align*}
 and then passing to the limit $n\to\infty$
\[\mu_tf=\mu_0 M_{t}f+\int_0^t(\sigma(r)-\sigma)\mu_r\B M_{t-r}f\,dr.\]
\end{proof}

We are now ready to prove Theorem~\ref{th:globalwp}.

\begin{proof}[Proof of Theorem~\ref{th:globalwp}]
With Lemma~\ref{lm:local} we have proved for any $\mu_0$ element of $\P([0,1])$ the existence of a local solution, on $[0,T^{**}).$
But since $T^{**}$ is independent of $\mu_0,$ we can iterate the procedure to get a global solution.
The uniqueness is a consequence of the Duhamel formula~\eqref{eq:Duhamel}.
Let $(\mu_t)_{0\leq t<T}$ and $(\widetilde\mu_t)_{0\leq t<\widetilde T}$ be two solutions.
Then for any $t$ in $[0,\min\{T,\widetilde T\})$ we get from the Duhamel formula with $\sigma$ being taken to be equal to $\frac{\sigma_0}{1-J}$
\begin{align*}
\left\|\mu_t-\widetilde\mu_t\right\|_{\mathrm{TV}}&
\leq\left\|\mu_0-\widetilde\mu_0\right\|_{\mathrm{TV}}+2\int_0^t|\sigma(s)-\widetilde\sigma(s)|\,ds+\frac{2\sigma_0}{1-J}\int_0^t\left\|\mu_s-\widetilde\mu_s\right\|_{\mathrm{TV}}\,ds\\
&\leq\left\|\mu_0-\widetilde\mu_0\right\|_{\mathrm{TV}}+\frac{2\sigma_0}{(1-J)^2}\int_0^t\left\|\mu_s-\widetilde\mu_s\right\|_{\mathrm{TV}}\,ds
\end{align*}
and by Gr\"onwall's lemma
\[\left\|\mu_t-\widetilde\mu_t\right\|_{\mathrm{TV}}\leq\left\|\mu_0-\widetilde\mu_0\right\|_{\mathrm{TV}}\,\e^{\frac{2\sigma_0}{(1-J)^2}t}.\]
\end{proof}

\section{Steady state analysis}\label{sec:steadystates}

This section is devoted to the proof of Theorem~\ref{th:steadystate}.
The result in Theorem~\ref{th:main_lin} ensures that for all $\sigma>0$ there exists a unique $\mu^\sigma$ belonging to $\P([0,1])$ such that $$\mu^\sigma\A_\sigma=0.$$
The question we address here is the existence of $\bar\sigma>0$ such that
\begin{equation}\label{eq:nonlin_steadystate}
\bar\sigma=\frac{\sigma_0}{1-J\mu^{\bar\sigma}([1-h,1])}.
\end{equation}
In this case the measure $\bar\mu:=\mu^{\bar\sigma}$ is a steady state for the nonlinear equation.
We will prove the existence of such a steady state when
$$J<1+\big\lfloor\frac{1-v_r}{h}\big\rfloor$$
and the existence of at least two steady states when
\[J>1+\left\lfloor\frac{1-v_r}{h}\right\rfloor\qquad\text{and}\qquad\sigma_0<\frac{1-h}{4J}.\]
These conditions have to be compared to the condition
\[J\geq1+\frac{1-v_r}{h}\qquad\text{and}\qquad h\sigma_0>1\]
which ensures that all the solutions to Equation~\eqref{eq:LIF} blow up in finite time whatever their initial distribution~\cite{DH02}.

\medskip

For finding $\bar\sigma$ which satisfies~\eqref{eq:nonlin_steadystate}, we define the two functions
\[F\,:\left\{\begin{array}{ccl}
(0,+\infty) & \to & [0,+\infty)
\vspace{1mm}\\
\sigma & \mapsto &\displaystyle \mu^\sigma([1-h,1])
\end{array}\right.
\qquad\text{and}\qquad
G\,:
\left\{\begin{array}{ccl}
(0,+\infty) & \to & \R \\
\sigma & \mapsto & \dfrac{1}{J}\left(1-\dfrac{\sigma_0}{\sigma} \right)
\end{array}\right..\]
and we prove the existence of $\bar\sigma$ such that $$F(\bar\sigma)=G(\bar\sigma).$$
To do so we need informations on the function $F$ and it requires some regularity results on the invariant measure $\mu^\sigma.$

\begin{lemma}\label{lm:p_sigma}
For any $\sigma>0,$ the invariant measure $\mu^\sigma$ is absolutely continuous with respect to the Lebesgue measure.
Its density $p_\sigma$ satisfies 
\[vp_\sigma(v)\in W^{1,\infty}([0,v_r)\cup(v_r,1]), \quad \forall v\in(0,1),\qquad 0<p_\sigma(v)\leq\min\Big\{\frac{\sigma v^{\sigma-1}}{h^\sigma},\frac\sigma v\Big\}.\]

\end{lemma}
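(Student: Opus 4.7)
The plan is to derive from the invariance relation $\mu^\sigma\A_\sigma=0$ a distributional first-order equation for $\mu^\sigma,$ then establish absolute continuity inductively on subintervals of length at most $h,$ and finally read off the pointwise upper bounds by direct integration.

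I would first test the invariance equation against an arbitrary $f\in C^1([0,1]).$ An integration by parts in the leak term $-\int vf'\,d\mu^\sigma$ together with the change of variable $w=v+h$ in the shift term yields, in the sense of distributions on $(0,1),$
\[\partial_v\bigl(v\,\mu^\sigma\bigr)=\sigma\,\mu^\sigma-\sigma\,T_h(\mu^\sigma|_{[0,1-h]})-r\,\delta_{v_r},\]
where $r:=\sigma\mu^\sigma([1-h,1])$ and $T_h$ denotes the pushforward by $v\mapsto v+h;$ the boundary contribution at $v=1$ will enforce $p_\sigma(1)=0$ once the density is shown to exist.

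For absolute continuity I would partition $[0,1]$ at $v_r$ and at every multiple of $h,$ and proceed from left to right by induction on these finitely many pieces. On each piece the Dirac is absent and the shift term is either zero (if the piece lies in $(0,h)$) or an absolutely continuous measure already constructed at the previous step. Setting $\alpha=v\mu^\sigma,$ the equation becomes $\alpha'=(\sigma/v)\alpha+\psi\,dv$ with $\psi\in L^1,$ and the integrating factor $v^{-\sigma}$ reduces it to $(v^{-\sigma}\alpha)'=v^{-\sigma}\psi\,dv$ in $\mathcal D'.$ Since the right-hand side is an absolutely continuous measure, $v^{-\sigma}\alpha$ (hence $\alpha,$ hence $\mu^\sigma$) is absolutely continuous on the piece, with density given by a Duhamel-type formula. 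The matching at each multiple of $h$ is continuity of $vp_\sigma,$ while at $v_r$ it is the jump $vp_\sigma(v_r^+)=vp_\sigma(v_r^-)-r.$ On the very first piece this gives $p_\sigma(v)=Cv^{\sigma-1}$ for some $C\geq 0,$ and the Lipschitz regularity of $vp_\sigma$ on the stated set follows from the explicit Duhamel formula combined with the bound $p_\sigma\leq\sigma/v$ established below.

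For the pointwise bounds, integrating $(vp_\sigma)'=\sigma p_\sigma-\sigma p_\sigma(\cdot-h)\1_{[h,1]}-r\delta_{v_r}$ from $0$ to $v\in(0,v_r)$ gives $vp_\sigma(v)=\sigma\int_{\max(v-h,0)}^v p_\sigma\leq\sigma;$ integrating from $v\in(v_r,1)$ to $1,$ using $p_\sigma(1)=0$ and $r=\sigma\mu^\sigma([1-h,1]),$ yields the symmetric expression $vp_\sigma(v)=\sigma\int_{v-h}^v p_\sigma-r,$ again bounded by $\sigma,$ whence $p_\sigma(v)\leq\sigma/v.$ For the other bound, nonnegativity of $p_\sigma$ gives $(vp_\sigma)'\leq(\sigma/v)(vp_\sigma)$ on every smooth piece, and since the jump at $v_r$ is downward a global Gr\"onwall argument produces $vp_\sigma(v)\leq Cv^\sigma;$ the normalization $\int_0^h p_\sigma=Ch^\sigma/\sigma\leq 1$ forces $C\leq\sigma/h^\sigma,$ so $p_\sigma(v)\leq\sigma v^{\sigma-1}/h^\sigma.$ Strict positivity of $p_\sigma$ on $(0,h)$ (i.e.\ $C>0$) follows from the Doeblin minoration in the proof of Theorem~\ref{th:main_lin}, which forces $\mu^\sigma\geq c\nu$ on $[h/2,h];$ positivity then propagates through the ODE because its source term is nonnegative, and the jump at $v_r$ preserves nonnegativity of $vp_\sigma$ because $v_rp_\sigma(v_r^-)=\sigma\int_{v_r-h}^{v_r}p_\sigma\geq r.$

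The main obstacle will be the distributional ODE step at each piece: arguing rigorously that $(v^{-\sigma}\alpha)'$ being an absolutely continuous measure (plus possibly a Dirac at $v_r$) forces $v^{-\sigma}\alpha$ to be an absolutely continuous function rather than a generic BV measure, and carefully tracking the matching conditions across the multiples of $h$ and the jump at $v_r.$ Once this is in place, everything else reduces to direct manipulations with the resulting Duhamel formula.
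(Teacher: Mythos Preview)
Your overall architecture matches the paper's: pass to the distributional ODE for $v\mu^\sigma$, establish absolute continuity, then read off the bounds by Gr\"onwall and by integrating the equation. Your route to the $\sigma/v$ bound (integrating the equation to get $vp_\sigma(v)=\sigma\int_{\max(v-h,0)}^v p_\sigma\leq\sigma$) is in fact more direct than the paper's, which instead integrates the Gr\"onwall inequality $p_\sigma(w)\geq (w/v)^{\sigma-1}p_\sigma(v)$ over $w\in(0,v)$. For absolute continuity the paper takes a shortcut you may want to adopt: since the distributional derivative of $v\mu^\sigma$ is a finite measure, $v\mu^\sigma$ is automatically a BV \emph{function}, so $\mu^\sigma=\alpha\delta_0+p_\sigma\,dv$ in one stroke; one then rules out the atom at $0$ by testing against $f_n(v)=f(nv)$ with $f\in C^1_c([0,1))$, $f(0)=1$. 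Your inductive integrating-factor construction works too, but note that it only controls $\mu^\sigma$ on open pieces and does not by itself exclude an atom at $0$; you still need a separate argument for that.

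The genuine gap is in your positivity step. You write that ``positivity then propagates through the ODE because its source term is nonnegative,'' but this is false: in the equation $(vp_\sigma)'=\sigma p_\sigma-\sigma p_\sigma(\cdot-h)\1_{[h,1]}-r\delta_{v_r}$ the forcing $-\sigma p_\sigma(\cdot-h)$ is nonpositive, and with the integrating factor $v^{-\sigma}$ one gets $(v^{1-\sigma}p_\sigma)'=-\sigma v^{-\sigma}p_\sigma(\cdot-h)\leq0$, so $v^{1-\sigma}p_\sigma$ is \emph{decreasing} on $(h,v_r)$ and on $(v_r,1)$. Your integrated identity does give strict positivity on $(0,v_r)$, but for $v>v_r$ it reads $vp_\sigma(v)=\sigma\int_{v-h}^{v}p_\sigma-r$, and showing this is strictly positive is exactly the nontrivial point. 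The paper closes this by a support argument that follows from the very Gr\"onwall inequality you already have: since $p_\sigma(v)\leq(v/w)^{\sigma-1}p_\sigma(w)$ for $w<v$, a single zero forces $p_\sigma\equiv0$ on $[v_m,1]$, and then integrating the equation on $[v_m,\min(v_m+h,1)]$ yields $0=\sigma\int_{v_m-h}^{\min(v_m,1-h)}p_\sigma>0$, a contradiction. Replace your propagation claim by this.
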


\begin{proof}
Recall that $\mu^\sigma$ satisfies
\begin{equation}\label{eq:weak_steady}
\int_{[0,1]}\Big[\frac v\sigma f'(v)+f(v)-f(v+h)\1_{[0, 1-h)}(v)\Big]\,d\mu^\sigma(v)=f(v_r)\mu^\sigma([1-h,1]),\qquad\forall f\in C^1([0,1]).
\end{equation}
Thus the derivative of $v\mu^\sigma(dv)$ in the distributional sense is a finite measure and then $v\mu^\sigma(dv)$ is a function with bounded variation.
We deduce that there exist $\alpha\geq0$ and $p_\sigma\in L^1_+(0,1)$ such that 
$$\mu^\sigma=\alpha\,\delta_0+p_\sigma(v)\,dv.$$
More precisely $vp_\sigma(v)$ is a $W^{1,1}$ function on the intervals $(0,h),$ $(h,v_r)$ and $(v_r,1)$ (so that it has a left and a right trace at $v=h$ and $v=v_r$) with a jump at $v=v_r$ given by
\[v_rp_\sigma(v_r^-)-v_rp_\sigma(v_r^+)=\sigma\int_{1-h}^1 p_\sigma(w)\,dw\]
and also {\it a priori} at $v=h$ given by
\[hp_\sigma(h^-)-hp_\sigma(h^+)=\mu^\sigma(\{0\})=\alpha.\]
We will actually prove that $\mu^\sigma$ does not charge $0,$ {\it i.e.} $\alpha=0,$ so that there is no jump at $v=h.$
Consider $f\in C^1_c([0,1))$ which satisfies $f(0)=1,$
and define $f_n(v):=f(nv).$
For all $n>\lfloor1/h\rfloor$ Equation~\eqref{eq:weak_steady} written with $f_n$ gives
\[\int_{[0,1]}\Big[\frac v\sigma f_n'(v)+f_n(v)\Big]\,d\mu^\sigma(v)=0.\]
By dominated convergence we have
\[\int_{[0,1]}f_n(v)\,d\mu^\sigma(v)\xrightarrow[n\to\infty]{}\mu^\sigma(\{0\})=\alpha\]
and
\[\int_{[0,1]}vf_n'(v)\,d\mu^\sigma(v)=\int_{[0,1]}nvf'(nv)\,d\mu^\sigma(v) 
\xrightarrow[n\to\infty]{}0.\]
We conclude that $\alpha=0,$ so that $\mu^\sigma=p_\sigma(v)\,dv.$
The function $p_\sigma$ has no jump at $v=h$ and it satisfies the following equation on $(0,v_r)\cup(v_r,1)$
\[\frac{-1}{\sigma}(vp_\sigma(v))'+p_\sigma(v)=p_\sigma(v-h)\1_{[0,1-h)}(v).\]

\

For the bound on $p_\sigma$ we start by studying $p_\sigma$ on the interval $(0,h).$
On this interval the equation satisfied by $p_\sigma$ is
\[(vp_\sigma(v))'=p_\sigma(v)\]
which gives after integration
\begin{equation}\label{eq:explicit[0,h]}
p_\sigma(v)=\Big(\frac vh\Big)^{\sigma-1}p_\sigma(h^-).
\end{equation}

\

By positivity of $p_\sigma$ we have on both intervals $(0,v_r)$ and $(v_r,1)$ the differential inequality
\[(vp_\sigma(v))'\leq\sigma p_\sigma(v)\]
from which we get
\begin{equation}\label{eq:inequality-p_sigma}
\forall\ 0<w<v\leq1,\qquad p_\sigma(v)\leq\Big(\frac{v}{w}\Big)^{\sigma-1}p_\sigma(w).
\end{equation}
Integrating from $w=0$ to $w=v$ we deduce
\[1\geq\int_0^{v}p_\sigma(w)\,dw\geq\frac{p_\sigma(v)}{v^{\sigma-1}}\int_0^{v}w^{\sigma-1}dw=\frac{v p_\sigma(v)}{\sigma}\]
and
\[p_\sigma(v)\leq\frac\sigma v\qquad\forall\ 0<v\leq1.\]
Combining with~\eqref{eq:inequality-p_sigma} we get that
\[\forall\ 0<w<v\leq1,\qquad p_\sigma(v)\leq\frac{\sigma}{w^\sigma} v^{\sigma-1}\]
and with~\eqref{eq:explicit[0,h]} we obtain
\[p_\sigma(v)\leq\frac{\sigma v^{\sigma-1}}{h^\sigma}\qquad\forall\ 0<v\leq1.\]

\

From~\eqref{eq:inequality-p_sigma} we deduce that the support of $p_\sigma$ is necessarily of the form $[0,v_m]$ with $0<v_m\leq1$
and $p_\sigma$ is strictly positive in the interior of its support.
Assume that $v_m<1.$
Then integrating the equation satisfied by $p_\sigma$ between $v_m$ and $\tilde v:=\min\{v_m+h,1\}>v_m$ we get the contradiction
\[0=\int_{v_m-h}^{\tilde v-h}p_\sigma(v)\,dv>0.\]
So $v_m=1$ and $p_\sigma(v)>0$ for all $v$ in $(0,1).$

\end{proof}

Lemma~\ref{lm:p_sigma} allows to get informations about the asymptotic behaviour of the function $F.$

\begin{lemma}\label{lm:Fproperties}
The function $F$ is continuous and satisfies
\[\lim_{\sigma\to0}F(\sigma)=0\qquad\text{and}\qquad
\lim_{\sigma\to+\infty}F(\sigma)= \left(1+ \left\lfloor \dfrac{1-v_r}{h}\right\rfloor \right) ^{-1}.\]
\end{lemma}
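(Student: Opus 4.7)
My plan is to handle the three assertions separately, leveraging the linear analysis of Section~\ref{sec:linear_case} and the pointwise estimates on $p_\sigma$ from Lemma~\ref{lm:p_sigma}.

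\textbf{Continuity.} The idea is to rerun the linear construction of Section~\ref{sec:linear_case} with general jump rate $\sigma > 0$ in place of the fixed $\sigma_0$, producing a family of contraction semigroups $(M_t^\sigma)_{t\geq0}$ whose unique invariant measure is $\mu^\sigma$, with Doeblin constants $t_0 = \log(4/h)$ (independent of $\sigma$) and $a(\sigma)>0$ continuous in $\sigma$. For any reference measure $\mu_0 \in \P([0,1])$ and any $\sigma,\sigma'>0$, the triangle inequality
\[\|\mu^\sigma - \mu^{\sigma'}\|_{TV} \leq 2\e^{-a(\sigma)(t-t_0)} + \|\mu_0 M_t^\sigma - \mu_0 M_t^{\sigma'}\|_{TV} + 2\e^{-a(\sigma')(t-t_0)}\]
reduces TV-continuity of $\sigma \mapsto \mu^\sigma$ to continuity of $\sigma \mapsto \mu_0 M_t^\sigma$ at fixed $t$, which follows from Lemma~\ref{lm:fixedpoint} since the defining fixed-point map is jointly continuous in $(\sigma,f)$ and uniformly contractive on compact $\sigma$-ranges. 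Then $F(\sigma) = \mu^\sigma([1-h,1])$ is also continuous.

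\textbf{Limit at $0$.} This falls directly out of the pointwise bound $p_\sigma(v) \leq \sigma/v$ from Lemma~\ref{lm:p_sigma}, giving
\[F(\sigma) = \int_{1-h}^1 p_\sigma(v)\,dv \leq \sigma \log\frac{1}{1-h} \xrightarrow[\sigma\to0]{} 0.\]

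\textbf{Limit at $+\infty$.} The strategy is to pass to the limit in the stationarity relation tested against $f \in C^1([0,1])$, rewritten as $\mu^\sigma(\B f) = \sigma^{-1}\mu^\sigma(vf')$, whose right-hand side is $O(1/\sigma)$. By compactness of $\P([0,1])$ in the weak-$*$ topology I extract a subsequence $\mu^{\sigma_n} \to \mu^*$. Decomposing the discontinuous test function as $\B f = \tilde g_f + (f(v_r) - f(1))\,\1_{[1-h,1]}$ with $\tilde g_f \in C([0,1])$, I pass to the limit to obtain
\[\int f\,d\mu^* = \int f(v+h)\,\1_{[0,1-h)}(v)\,d\mu^*(v) + f(v_r)\,\mu^*([1-h,1]),\]
i.e.\ invariance of $\mu^*$ under the deterministic map $T:[0,1]\to[0,1]$ defined by $Tv = v+h$ on $[0,1-h)$ and $Tv = v_r$ on $[1-h,1]$. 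A backward-chasing argument on atoms, combined with the standing assumption $(1-v_r)/h \notin \N$, shows that the only $T$-invariant probability measure is the uniform measure $\mu^* = (K+1)^{-1}\sum_{k=0}^K \delta_{v_r + kh}$ on the $(K+1)$-cycle $\{v_r + kh : 0 \leq k \leq K\}$, where $K = \lfloor (1-v_r)/h\rfloor$. Since the same assumption ensures $1-h$ is not on the cycle, $\mu^*(\{1-h\}) = 0$, so the Portmanteau theorem applied to the closed set $[1-h,1]$ yields $F(\sigma_n) \to 1/(K+1)$; uniqueness of the limit removes the subsequence.

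The principal obstacle lies in this $\sigma \to +\infty$ step, where the jump of $\B f$ at $v = 1-h$ prevents a naive weak-$*$ passage to the limit (resolved only a posteriori via $\mu^*(\{1-h\}) = 0$), and where the classification of $T$-invariant probability measures must rule out both continuous singular parts and off-cycle atoms --- both arguments depending in an essential way on the non-commensurability hypothesis $(1-v_r)/h \notin \N$.
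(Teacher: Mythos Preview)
Your continuity argument, via the Doeblin contraction and TV-continuity of $\sigma\mapsto\mu_0 M_t^\sigma$ coming from the fixed-point construction, is valid and genuinely different from the paper's. The paper instead exploits the uniform density bound of Lemma~\ref{lm:p_sigma} and Dunford--Pettis to obtain $L^1$-weak sequential compactness of $(p_{\sigma_n})$, identifies the limit by uniqueness of the stationary density, and concludes continuity of $\sigma\mapsto\int p_\sigma\varphi$ for every $\varphi\in L^\infty$. Your route stays entirely within the semigroup machinery of Section~\ref{sec:linear_case} and yields the stronger TV-continuity; the paper's route is more elementary but relies on knowing that $\mu^\sigma$ has a density so that $\1_{[1-h,1]}$ is an admissible test function. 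The limit at $0$ is the same as the paper's.

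For $\sigma\to+\infty$ there is a circularity as written. You decompose $\B f=\tilde g_f+(f(v_r)-f(1))\1_{[1-h,1]}$ and then ``pass to the limit'' to obtain full $T$-invariance of $\mu^*$, but the second term contributes $(f(v_r)-f(1))F(\sigma_n)$, and passing to the limit there requires $F(\sigma_n)\to\mu^*([1-h,1])$ --- exactly the Portmanteau step you place \emph{after} the identification of $\mu^*$. The fix is what the paper does: restrict first to test functions with $f(v_r)=f(1)$ (the set the paper calls $D$), so that $\B f\in C([0,1])$ and the weak-$*$ limit gives $\mu^*(\B f)=0$ on $D$ without any appeal to $F(\sigma_n)$. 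This restricted family is already rich enough to pin down $\mu^*$ as the uniform measure on the cycle $\{v_r+kh:0\le k\le K\}$; the paper does this with bump functions in $C^1_c$ of successive subintervals, and your backward-chasing argument can be run the same way since all the localizing test functions needed satisfy $f(v_r)=f(1)=0$. Only once $\mu^*$ is identified and $\mu^*(\{1-h\})=0$ is known do you invoke Portmanteau (equivalently, the paper's squeeze between $\tilde\chi_n$ and $\chi_n$) to conclude $F(\sigma_n)\to\mu^*([1-h,1])=(1+K)^{-1}$.
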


\begin{proof}
Fix $\sigma>0$ and consider a sequence $(\sigma_n)_{n\in\N} $ belonging to $[\sigma/2,2\sigma]$ which converges to $\sigma.$
Since the associated sequence $(p_{\sigma_n})_{n\in\N}$ satisfies
\[\forall n\in\N,\,\forall v\in[0,1],\qquad 0\leq p_{\sigma_n}(v)\leq\frac{2\sigma v^{\sigma/2-1}}{h^{2\sigma}}\]
we deduce from the Dunford-Pettis theorem that there exists a subsequence, still denoted by $(p_{\sigma_n}),$ which converges $L^1$-weak to a limit $q$ element of $\P([0,1])\cap L^1(0,1),$ {\it i.e.}
\[\int_0^1p_{\sigma_n}(v)\varphi(v)\,dv\to\int_0^1 q(v)\varphi(v)\,dv\]
for all $\varphi$ belonging to $L^\infty(0,1).$
Passing to the limit in the weak formulation we get that $q$ is solution to~\eqref{eq:weak_steady}.
By uniqueness we get that $q=p_\sigma$ and the whole sequence converges to $p_\sigma.$
This gives the continuity of the application
\[\sigma\mapsto\int_0^1 p_{\sigma}(v)\varphi(v)\,dv\]
for any $\varphi\in L^\infty(0,1),$ and as a consequence the continuity of $F$ since $\mathbf1_{[1-h,1]}\in L^\infty(0,1).$

\

For the limit at $0$ we readily deduce from Lemma~\ref{lm:p_sigma} that
\[0\leq F(\sigma)\leq\frac{\sigma}{1-h}\xrightarrow[\sigma\to0]{}0.\]
If we want to be more precise we can prove that $p_\sigma$ converges weak* to $\delta_0$ when $\sigma\to0.$
Indeed for any sequence $(\sigma_n)_{n\in\mathds N}$ which tends to $0$
we can extract from $(p_{\sigma_n})_{n\geq0}$ a subsequence, still denoted $(p_{\sigma_n})_{n\geq0},$ which converges weak* to a probability measure $\mu.$
Lemma~\ref{lm:p_sigma} ensures that
\[\forall\, 0<\epsilon\leq v\leq1,\,\forall\sigma>0,\qquad 0\leq p_{\sigma}(v)\leq \frac{\sigma}{\epsilon}\] 
so we deduce that for all $f\in C_c([\epsilon,1])$ we have
\[\mu f=\lim_{n\to\infty}\int_0^1p_{\sigma_n}(v)f(v)\,dv=\lim_{n\to\infty}\int_\epsilon^1p_{\sigma_n}(v)f(v)\,dv=0.\]
As a consequence $\supp\mu={0}$ and since $\mu$ belongs to $\P([0,1])$ we deduce that $\mu=\delta_0,$
and then $p_\sigma\stackrel{*}{\rightharpoonup}\delta_0$ when $\sigma\to0$ since the sequence $(\sigma_n)$ is arbitrary.

\

We finish with the limit at infinity.
Let $(\sigma_n)_{n\in\mathds N}$ a sequence which tends to $+\infty.$
We can extract from $(p_{\sigma_n})_{n\geq0}$ a subsequence, still denoted $(p_{\sigma_n})_{n\geq0},$ which weakly converges to a probability measure $\mu.$
We want to identify the limit $\mu.$
We have that
\[\int_0^1p_{\sigma_n}(v)f(v)\,dv\to\mu f\qquad(n\to\infty)\]
for all $f$ element of $C([0,1]).$
We define
\[D:=\{f\in C^1([0,1]),f(v_r)=f(1)\}\]
which satisfies the property that
\[\forall f\in D,\quad \A f\in C([0,1])\]
and as a consequence
\[\forall f\in D,\qquad 0=\frac{1}{\sigma_n}\mu^{\sigma_n}\A_{\sigma_n}f=\frac{1}{\sigma_n}\mu^{\sigma_n}(vf'(v))+\mu^{\sigma_n}\B f\xrightarrow[n\to\infty]{}\mu\B f.\]
This property that $\mu\B f=0$ for all $f$ taken in $ D,$ {\it i.e.}
\[\int_{[0,1]}\Big[f(v)-f(v+h)\mathbf1_{[0,1-h)}(v)\Big]\,d\mu(v)=f(v_r)\mu([1-h,1]),\qquad\forall f\in D.\]
allows to prove that
\[\mu= \left(1+ \left\lfloor \dfrac{1-v_r}{h}\right\rfloor \right) ^{-1}\sum_{k=0}^{ \left\lfloor \frac{1-v_r}{h}\right\rfloor}\delta_{v_r+kh}.\]
We prove this step by step.
First for $f$ taken from $ C^1_c([0,h))$ we get $$\int_{[0,h)}f(v)d\mu(v)=0$$ and so $$\supp \mu\cap[0,h)=\emptyset.$$
We easily deduce by induction, choosing $f\in C^1_c(kh,(k+1)h),$ that $\supp \mu\cap[0,kh)=0$ for any $k\in\N$ such that $kh\leq v_r$
and then with one more step, with $f\in C^1_c(kh,v_r),$ that $\supp \mu\cap[0,v_r)=\emptyset.$
Keeping going we get by choosing $f $ in $C^1_c(v_r+kh,v_r+(k+1)h)$ that $$\supp \mu\cap(v_r+kh,v_r+(k+1)h)=\emptyset, \quad\forall k\in\N.$$
Finally we have proved that
\[\supp \mu\subset\bigg\{\{v_r+kh\},\ 0\leq k\leq  \left\lfloor \dfrac{1-v_r}{h}\right\rfloor\bigg\}\]
so there exists a finite family of nonnegative real numbers $\alpha_k$ such that
\[\mu=\sum_{k=0}^{ \left\lfloor \frac{1-v_r}{h}\right\rfloor}\alpha_k\,\delta_{v_r+kh}.\]
For $$1\leq k\leq \left\lfloor \dfrac{1-v_r}{h}\right\rfloor,$$ considering $f$ belonging to $ C^1_c(v_r+(k-1/2)h,\min\{v_r+(k+1/2)h,1\})$ such that $f(v_r+kh)=1$ as a test function, we get that $\alpha_k=\alpha_{k-1}.$
Now let $f$ be an element of $ C^1([0,1])$ such that $$f(v_r)=f(1)=1, \quad \supp f\subset[v_r-h/2,v_r+h/2]\cup[v_r+\left\lfloor \frac{1-v_r}{h}\right\rfloor h,1].$$
Using this function as a test function we get that $$\alpha_0=\alpha_{\left\lfloor \frac{1-v_r}{h}\right\rfloor}.$$
Finally all the $\alpha_k$ are equal and
\[\mu= \left(1+ \left\lfloor \dfrac{1-v_r}{h}\right\rfloor \right) ^{-1}\sum_{k=0}^{ \left\lfloor \frac{1-v_r}{h}\right\rfloor}\delta_{v_r+kh}.\]
Since this limit does not depend on the subsequence, we deduce that for any $f$ taken from $ C([0,1])$ we have
\[\lim_{\sigma\to+\infty}\int_0^1f(v)p_\sigma(v)\,dv=\int_0^1f(v)\,d\mu(v)=\left(1+ \left\lfloor \dfrac{1-v_r}{h}\right\rfloor \right) ^{-1}\sum_{k=0}^{ \left\lfloor \frac{1-v_r}{h}\right\rfloor}f(v_r+kh).\]
Since $\mathbf1_{[1-h,1]}$ is not continuous we cannot conclude directly for the limit of $F.$
For all $n\geq1$ we define $$\tilde\chi_n(v):=\chi_n(v-\frac hn).$$
For all $v$ in $[0,1]$ we have $$\tilde\chi_n(v)\leq\1_{[1-h,1]}(v)\leq\chi_n(v)$$ and since $v_r+\left\lfloor \frac{1-v_r}{h}\right\rfloor h$ belongs to $[1-h,1]$
\[\lim_{\sigma\to+\infty}\int_0^1\chi_n(v)p_\sigma(v)\,dv=\left(1+ \left\lfloor \dfrac{1-v_r}{h}\right\rfloor \right) ^{-1}
\bigg[\chi_n\bigg(v_r+\left\lfloor \frac{1-v_r}{h}\right\rfloor h-h\bigg)+\chi_n\bigg(v_r+\left\lfloor \frac{1-v_r}{h}\right\rfloor h\bigg)\bigg],\]
\[\lim_{\sigma\to+\infty}\int_0^1\tilde\chi_n(v)p_\sigma(v)\,dv=\left(1+ \left\lfloor \dfrac{1-v_r}{h}\right\rfloor \right) ^{-1}\tilde\chi_n\bigg(v_r+\left\lfloor \frac{1-v_r}{h}\right\rfloor h\bigg).\]
We deduce that for all $n\geq1$
\[\tilde\chi_n\bigg(v_r+\left\lfloor \frac{1-v_r}{h}\right\rfloor h\bigg)
\leq\left(1+ \left\lfloor \dfrac{1-v_r}{h}\right\rfloor \right)\lim_{\sigma\to+\infty}F(\sigma)
\leq\chi_n\bigg(v_r+\left\lfloor \frac{1-v_r}{h}\right\rfloor h-h\bigg)+\chi_n\bigg(v_r+\left\lfloor \frac{1-v_r}{h}\right\rfloor h\bigg).\]
The fact that $$v_r+\left\lfloor \frac{1-v_r}{h}\right\rfloor h\in(1-h,1),$$ which is guaranteed from the assumptions we made, ensures that
\[\lim_{n\to\infty}\chi_n\bigg(v_r+\left\lfloor \frac{1-v_r}{h}\right\rfloor h\bigg)
=\lim_{n\to\infty}\tilde\chi_n\bigg(v_r+\left\lfloor \frac{1-v_r}{h}\right\rfloor h\bigg)=1\]
and
\[\lim_{n\to\infty}\chi_n\bigg(v_r+\left\lfloor \frac{1-v_r}{h}\right\rfloor h-h\bigg)=0,\]
and the conclusion follows.
\end{proof}

\begin{corollary}
If
\[J<1+\left\lfloor\frac{1-v_r}{h}\right\rfloor,\]
then there exists at least one steady state,
and if
\[J>1+\left\lfloor\frac{1-v_r}{h}\right\rfloor\qquad\text{and}\qquad\sigma_0<\frac{1-h}{4J},\]
then there exist at least two steady states.
\end{corollary}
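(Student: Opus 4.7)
The plan is to find $\bar\sigma$ with $F(\bar\sigma)=G(\bar\sigma)$ by applying the intermediate value theorem to the continuous function $F-G$ on $(0,\infty)$, using the limits of $F$ from Lemma~\ref{lm:Fproperties} together with the elementary properties of $G$. Note that $G$ is continuous and strictly increasing on $(0,\infty)$ with $G(\sigma_0)=0$ and $\lim_{\sigma\to+\infty}G(\sigma)=1/J$, while $F$ is continuous, strictly positive (since Lemma~\ref{lm:p_sigma} gives $p_\sigma>0$ on $(0,1)$), with $F(\sigma)\to 0$ as $\sigma\to 0$ and $F(\sigma)\to\big(1+\lfloor(1-v_r)/h\rfloor\big)^{-1}$ as $\sigma\to+\infty$. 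In particular $F(\sigma_0)>0=G(\sigma_0)$, so $F-G$ is strictly positive at $\sigma_0.$

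For case (i), the assumption $J<1+\lfloor(1-v_r)/h\rfloor$ rewrites as $1/J>\big(1+\lfloor(1-v_r)/h\rfloor\big)^{-1}$, so $\lim_{\sigma\to+\infty}(F-G)(\sigma)<0.$ Combined with $(F-G)(\sigma_0)>0$, the intermediate value theorem produces $\bar\sigma\in(\sigma_0,+\infty)$ with $F(\bar\sigma)=G(\bar\sigma).$ One then checks that $\bar\sigma>0$ and $1-J\bar\mu([1-h,1])=\sigma_0/\bar\sigma>0,$ so that $\bar\mu:=\mu^{\bar\sigma}$ is a genuine steady state of the nonlinear equation.

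For case (ii), the asymptotic behaviour now gives $\lim_{\sigma\to+\infty}(F-G)(\sigma)>0$, so by itself IVT does not produce an intersection. The key observation is that under $\sigma_0<(1-h)/(4J)$ one can exhibit an intermediate value $\sigma^\ast>\sigma_0$ at which $F-G$ is negative. The natural choice is $\sigma^\ast:=2\sigma_0$, at which $G(\sigma^\ast)=1/(2J).$ Using the a~priori bound $p_\sigma(v)\leq\sigma/v$ from Lemma~\ref{lm:p_sigma}, on $[1-h,1]$ one gets $p_{\sigma^\ast}(v)\leq 2\sigma_0/(1-h)$, hence
\[F(2\sigma_0)\leq\frac{2\sigma_0 h}{1-h}<\frac{1}{2J}=G(2\sigma_0),\]
where the strict inequality follows from $\sigma_0<(1-h)/(4J)\leq(1-h)/(4Jh)$ since $h\in(0,1).$ Applying IVT separately on $(\sigma_0,2\sigma_0)$ (where $F-G$ changes from positive to negative) and on $(2\sigma_0,+\infty)$ (where it changes from negative back to positive by the $J>1+\lfloor(1-v_r)/h\rfloor$ asymptotic) yields the two required values $\bar\sigma_1<\bar\sigma_2.$

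The analytic content is entirely packaged in Lemma~\ref{lm:Fproperties} and Lemma~\ref{lm:p_sigma}; once these are in hand the proof is a one-page elementary IVT argument, and the only delicate step is the inequality $F(2\sigma_0)<G(2\sigma_0)$, where one must check that the pointwise bound $p_\sigma\leq\sigma/v$ is tight enough given the hypothesis $\sigma_0<(1-h)/(4J).$ I do not anticipate any serious obstacle beyond keeping track of the constants.
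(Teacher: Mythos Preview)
Your proof is correct and follows essentially the same strategy as the paper: reduce the problem to finding fixed points $F(\bar\sigma)=G(\bar\sigma)$ and apply the intermediate value theorem, using the limits of $F$ from Lemma~\ref{lm:Fproperties} and the pointwise bound $p_\sigma(v)\leq\sigma/v$ from Lemma~\ref{lm:p_sigma}. The only cosmetic difference is in part~(ii): the paper bounds $F(\sigma)-G(\sigma)$ from above by $\dfrac{\sigma}{1-h}+\dfrac{\sigma_0}{J\sigma}-\dfrac{1}{J}$ and minimizes this over $\sigma$ to obtain a negative value under the hypothesis, whereas you make the explicit choice $\sigma^\ast=2\sigma_0$ and verify the inequality there directly; both routes exploit the same estimate and lead to the same conclusion.
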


\begin{proof}
If the condition $$J<1+\big\lfloor\frac{1-v_r}{h}\big\rfloor$$ is satisfied then Lemma~\ref{lm:Fproperties} ensures the existence of $\bar\sigma>0$ such that $$F(\bar\sigma)=G(\bar\sigma).$$
Then $\bar\sigma$ satisfies~\eqref{eq:nonlin_steadystate} and $\mu^{\bar\sigma}$ is a steady state of the nonlinear equation.

When $$J>1+\big\lfloor\frac{1-v_r}{h}\big\rfloor$$ we have
$$\lim_{\sigma\to0}F(\sigma)-G(\sigma)=+\infty\qquad\text{and} \qquad \lim_{\sigma\to+\infty}F(\sigma)-G(\sigma)>0.$$
On the other hand Lemma~\ref{lm:p_sigma} implies that for all $\sigma>0$
\[F(\sigma)-G(\sigma)\leq\frac{\sigma}{1-h}-\frac1J.\]
The minimum of the right hand side function is $\sqrt{\frac{4\sigma_0}{J(1-h)}}-\frac1J.$
We deduce that $F-G$ changes sign at least twice when $$\sigma_0<\frac{1-h}{4J},$$
and this ensures the existence of two steady states.
\end{proof}

Now we turn to the exponential stability of the unique steady state when $J$ is small.
It is a consequence of the following proposition, the proof of which is based on the Duhamel formula~\eqref{eq:Duhamel} combined with the exponential contraction of linear semigroup $(M_t)_{t\geq0}.$

\begin{proposition}
Let $(\mu_t)_{t\geq0}$ be a measure solution to Equation~\eqref{eq:LIF}
and let $\bar\mu$ be a steady state.
Then for all $t\geq0$ we have
\[\left\|\mu_t-\bar\mu\right\|_{\mathrm{TV}}\leq \frac{\e^{\omega t}}{1-c}\left\|\mu_0-\bar\mu\right\|_{\mathrm{TV}}\]
where the constants are given by
 $$c=\frac{\sigma_0}{2}\big(\frac h4\big)^{\sigma_0}, \quad \omega=\frac{2\sigma_0 J}{(1-c)(1-J)^2}+\frac{\log(1-c)}{\log\frac4h}.$$
\end{proposition}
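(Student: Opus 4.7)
The plan is to combine the Duhamel formula~\eqref{eq:Duhamel} (with fixed parameter $\sigma=\sigma_0$) with the exponential contraction of the linear semigroup $(M_t)_{t\geq 0}$ from Theorem~\ref{th:main_lin}. Since $\bar\mu$ is a steady state, the constant family $t\mapsto\bar\mu$ is itself a measure solution of~\eqref{eq:LIF}, so writing the Duhamel formula for both $(\mu_t)$ and $\bar\mu$ and subtracting produces
\[(\mu_t-\bar\mu)f \;=\; (\mu_0-\bar\mu) M_t f \;+\; \int_0^t\nu_s\,\B M_{t-s}f\,ds,\qquad \nu_s := (\sigma(s)-\sigma_0)\mu_s - (\bar\sigma-\sigma_0)\bar\mu.\]
The homogeneous term is immediately controlled by Theorem~\ref{th:main_lin}: since $(\mu_0-\bar\mu)\1=0$, the linear contraction gives $\|(\mu_0-\bar\mu)M_t\|_{TV}\leq \e^{-a(t-t_0)}\|\mu_0-\bar\mu\|_{TV} = \frac{\e^{-at}}{1-c}\|\mu_0-\bar\mu\|_{TV}$, using the identity $\e^{a t_0}=(1-c)^{-1}$.

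The central observation that powers the argument is that for any signed measure $\mu$, the signed measure $\mu\B$ defined by duality via $(\mu\B)f=\mu(\B f)$ has zero total mass (because $\B\1=0$) and satisfies the trivial bound $\|\mu\B\|_{TV}\leq 2\|\mu\|_{TV}$. Consequently Theorem~\ref{th:main_lin} can be applied to $\mu\B$ in place of $\mu$, transferring the linear exponential decay \emph{inside} the integral: $\|(\mu\B)M_{t-s}\|_{TV}\leq \frac{2\,\e^{-a(t-s)}}{1-c}\|\mu\|_{TV}$. I would then decompose $\nu_s = (\bar\sigma-\sigma_0)(\mu_s-\bar\mu) + (\sigma(s)-\bar\sigma)\mu_s$ and use the elementary estimates
\[|\bar\sigma-\sigma_0|\leq\frac{J\sigma_0}{1-J},\qquad |\sigma(s)-\bar\sigma|\leq\frac{J\sigma_0}{(1-J)^2}\|\mu_s-\bar\mu\|_{TV},\]
which follow from the identity $\sigma(s)-\bar\sigma = \frac{\sigma(s)\bar\sigma J}{\sigma_0}(\mu_s-\bar\mu)([1-h,1])$ together with the bound $\sigma(s),\bar\sigma\leq\sigma_0/(1-J)$. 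Combining everything I expect to obtain the integral inequality
\[\|\mu_t-\bar\mu\|_{TV}\leq\frac{\e^{-at}}{1-c}\|\mu_0-\bar\mu\|_{TV} + \frac{2\sigma_0 J}{(1-c)(1-J)^2}\int_0^t\e^{-a(t-s)}\|\mu_s-\bar\mu\|_{TV}\,ds.\]

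Setting $u(t):=\e^{at}\|\mu_t-\bar\mu\|_{TV}$ turns this into a standard linear integral inequality, and Gr\"onwall's lemma yields $u(t)\leq (1-c)^{-1}\|\mu_0-\bar\mu\|_{TV}\exp\bigl(\frac{2\sigma_0 J}{(1-c)(1-J)^2}t\bigr)$, which rearranges to the announced bound with $\omega = \frac{2\sigma_0 J}{(1-c)(1-J)^2} - a = \frac{2\sigma_0 J}{(1-c)(1-J)^2} + \frac{\log(1-c)}{\log(4/h)}$. The main subtle step is the middle one: the remark that $\mu\B$ is a zero-mass signed measure is what allows the exponential contraction of Theorem~\ref{th:main_lin} to act on the integrand. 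Without it, one could only use the trivial bound $\|\B M_{t-s}f\|_\infty\leq 2\|f\|_\infty$ inside the integral, and Gr\"onwall's lemma would then produce a purely growing exponential with rate independent of $a$, destroying the stability estimate.
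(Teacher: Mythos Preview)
Your strategy is exactly the paper's: Duhamel formula, the zero-mass observation $(\mu\B)\1=0$ to feed the integrand into Theorem~\ref{th:main_lin}, and then Gr\"onwall. The only real discrepancy is your choice of the reference rate in~\eqref{eq:Duhamel}: you take $\sigma=\sigma_0$, whereas the paper takes $\sigma=\bar\sigma$. With $\sigma=\bar\sigma$ the Duhamel formula applied to the constant solution $\bar\mu$ has vanishing integral (since $\bar\sigma-\bar\sigma=0$), so after subtraction the integrand is simply $(\sigma(s)-\bar\sigma)\mu_s\B M_{t-s}$, and only your second elementary estimate $|\sigma(s)-\bar\sigma|\leq\frac{J\sigma_0}{(1-J)^2}\|\mu_s-\bar\mu\|_{TV}$ is needed. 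This yields directly the coefficient $\frac{2\sigma_0 J}{(1-c)(1-J)^2}$ in front of the integral.

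With your choice $\sigma=\sigma_0$, the decomposition $\nu_s=(\bar\sigma-\sigma_0)(\mu_s-\bar\mu)+(\sigma(s)-\bar\sigma)\mu_s$ forces you to carry \emph{both} terms. Bounding each with the zero-mass trick and your stated estimates gives
\[
\|\nu_s\B M_{t-s}\|_{TV}\leq \frac{2\e^{-a(t-s)}}{1-c}\Big(\frac{J\sigma_0}{1-J}+\frac{J\sigma_0}{(1-J)^2}\Big)\|\mu_s-\bar\mu\|_{TV}
=\frac{2\sigma_0 J(2-J)}{(1-c)(1-J)^2}\,\e^{-a(t-s)}\|\mu_s-\bar\mu\|_{TV},
\]
so your integral inequality holds only with $\frac{2\sigma_0 J(2-J)}{(1-c)(1-J)^2}$, not $\frac{2\sigma_0 J}{(1-c)(1-J)^2}$. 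The resulting $\omega$ is then off by the factor $(2-J)$ and does not match the statement. The fix is simply to run the whole argument with $\sigma=\bar\sigma$; then the extra term disappears and your plan goes through verbatim to the announced constant.
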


\begin{proof}
First for $\mu\in\M([0,1])$ we define the measure $\mu\B$ by
\[\forall f\in C([0,1]),\qquad(\mu\B) f:=\mu(\B f).\]
The conservation property $\B\1=0$ ensures that for any $\mu$ element of $\M([0,1])$ we have $(\mu\B)([0,1])=0.$
This allows us to deduce from Proposition~\ref{prop:expo_contraction}, using also that $\B$ is bounded by $2,$ that for all $\mu\in\M([0,1])$ and all $t\geq0$
\[\left\|\mu\B M_t\right\|_{\mathrm{TV}}\leq \e^{-a(t-t_0)}\left\|\mu\B\right\|_{\mathrm{TV}}\leq2\,\e^{-a(t-t_0)}\left\|\mu\right\|_{\mathrm{TV}},\]
where $$t_0=\log\frac4h, \quad a=\frac{-\log(1-c)}{t_0}.$$
Using this inequality in the Duhamel formula~\eqref{eq:Duhamel} with $\sigma=\bar\sigma$ we get
\begin{align*}
\left\|\mu_t-\bar\mu\right\|_{\mathrm{TV}}&\leq\left\|(\mu_0-\bar\mu)M_t\right\|_{\mathrm{TV}}+\sigma_0J\int_0^t\bigg|\frac{(\mu_s-\bar\mu)([1-h,1])}{(1-J\mu_s([1-h,1]))(1-J\bar\mu([1-h,1]))}\bigg|\,\left\|\mu_s\B M_{t-s}\right\|_{\mathrm{TV}}\,ds\\
&\leq\left\|\mu_0-\bar\mu\right\|_{\mathrm{TV}}\e^{-a(t-t_0)} +\frac{2\sigma_0J}{(1-J)^2}\int_0^t\left\|\mu_s-\bar\mu\right\|_{\mathrm{TV}}\,\e^{-a(t-s-t_0)}ds.
\end{align*}
Denoting $$\theta(t)=\left\|\mu_t-\bar\mu\right\|_{\mathrm{TV}}\,\e^{at}$$ this also reads
\[\theta(t)\leq\left\|\mu_0-\bar\mu\right\|_{\mathrm{TV}}\,\e^{a t_0}+\frac{2\sigma_0J\e^{a t_0}}{(1-J)^2}\int_0^t\theta(s)\,ds=\frac{1}{1-c}\left\|\mu_0-\bar\mu\right\|_{\mathrm{TV}}+\frac{2\sigma_0J}{(1-c)(1-J)^2}\int_0^t\theta(s)\,ds\]
and the Gr\"onwall's lemma ensures that
\[\theta(t)\leq\frac{\e^{\frac{2\sigma_0J}{(1-c)(1-J)^2}t}}{1-c}\left\|\mu_0-\bar\mu\right\|_{\mathrm{TV}}.\]
\end{proof}

\begin{corollary}
If the following condition holds $$J<(5-2\sqrt{6})\big(\frac{h}{4}\big)^{\sigma_0+1}$$ then the steady state is unique and globally exponentially stable.
\end{corollary}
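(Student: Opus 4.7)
The plan is to derive this corollary as a direct consequence of the preceding proposition, which yields
\[
\|\mu_t - \bar\mu\|_{TV} \leq (1-c)^{-1} \e^{\omega t} \|\mu_0 - \bar\mu\|_{TV},\qquad \omega = \frac{2\sigma_0 J}{(1-c)(1-J)^2} + \frac{\log(1-c)}{\log(4/h)},
\]
with $c = \sigma_0 (h/4)^{\sigma_0}/2$. Global exponential stability is equivalent to $\omega<0$, and uniqueness is then a free byproduct: if $\bar\mu_1\neq\bar\mu_2$ were two distinct steady states, the solution starting from $\bar\mu_2$ is constantly equal to $\bar\mu_2$, while applying the estimate with $\bar\mu=\bar\mu_1$ forces $\|\bar\mu_2-\bar\mu_1\|_{TV}\leq(1-c)^{-1}\e^{\omega t}\|\bar\mu_2-\bar\mu_1\|_{TV}\to 0$, a contradiction. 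So the whole task reduces to verifying that the hypothesis $J < (5-2\sqrt 6)\rho$, with $\rho := (h/4)^{\sigma_0+1}$, forces $\omega<0$.

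To verify $\omega<0$, I would chain together a few elementary bounds. Writing $a := -\log(1-c)/\log(4/h) > 0$, the condition $\omega<0$ amounts to $J/(1-J)^2 < a(1-c)/(2\sigma_0)$. Using $-\log(1-c)\geq c$ and the standard inequality $x\geq\log x$ applied at $x=4/h$ (giving $1/\log(4/h)\geq h/4$), together with the identity $(h/4)^{\sigma_0}=4\rho/h$, one obtains the clean lower bound $a \geq \sigma_0\rho/2$. A short one-variable maximization of $c$ in $\sigma_0$ shows that $c \leq 1/(2e\log(4/h)) \leq 1/(2e\log 4) < 1/2$, hence $1-c \geq 1/2$. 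Combining these yields $a(1-c)/(2\sigma_0) \geq \rho/8$, so the condition $\omega<0$ is implied by the purely numerical inequality $8J < \rho(1-J)^2$.

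The threshold $5-2\sqrt 6$ then enters naturally: the quadratic $x^2-10x+1 = 0$, equivalently $8x=(1-x)^2$, has smallest root $5-2\sqrt 6$, so that $8x<(1-x)^2$ holds precisely when $0\leq x<5-2\sqrt 6$. Since $\rho\leq 1/4<1$ (because $h<1$ and $\sigma_0 > 0$), the hypothesis $J<(5-2\sqrt 6)\rho$ gives $J<5-2\sqrt 6$ and therefore $(1-J)^2 > 8(5-2\sqrt 6)$. Multiplying by $\rho\leq 1$ and using again $J<(5-2\sqrt 6)\rho$, one concludes $8J<8(5-2\sqrt 6)\rho\leq\rho(1-J)^2$, which establishes $\omega<0$ and completes the proof.

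The main obstacle is essentially bookkeeping rather than conceptual: each inequality in the chain has some slack individually, but they must be arranged so that the final threshold emerges as $5-2\sqrt 6$ exactly. The identity $(h/4)^{\sigma_0}=4\rho/h$ is the key simplification that cancels the $\sigma_0$-dependence in the lower bound for $a$ and produces the clean form $a\geq\sigma_0\rho/2$, after which the remaining check is a one-variable quadratic exercise.
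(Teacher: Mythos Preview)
Your proof is correct and follows essentially the same route as the paper: both reduce to showing $\omega<0$, apply $-\log(1-c)\geq c$ and $\log(4/h)\leq 4/h$ to obtain the lower bound $a(1-c)/(2\sigma_0)\geq\rho/8$, use the one-variable maximization to get $c<1/2$, and then identify $5-2\sqrt6$ as the relevant root of $8x=(1-x)^2$. Your write-up is slightly more explicit about the final quadratic step and about deducing uniqueness from the decay estimate; the only cosmetic slip is that the phrase ``multiplying by $\rho\leq1$'' is not actually what you do---you simply multiply the strict inequality $(1-J)^2>8(5-2\sqrt6)$ by $\rho>0$---but the chain $8J<8(5-2\sqrt6)\rho<\rho(1-J)^2$ is valid as written.
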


\begin{proof}
We only have to check that if $$J<(5-2\sqrt{6})\big(\frac{h}{4}\big)^{\sigma_0+1}$$ then $$\frac{2\sigma_0 J}{(1-c)(1-J)^2}<\frac{-\log(1-c)}{\log\frac4h}.$$\\
Using that $$\log(x)\leq x-1$$ we get
\[\frac{-\log(1-c)}{2\sigma_0\log\frac4h}\geq\frac{c}{2\sigma_0\log\frac4h}=\frac{1}{4\log\frac4h}\Big(\frac{h}{4}\Big)^{\sigma_0}\geq\frac{1}{4}\Big(\frac{h}{4}\Big)^{\sigma_0+1}.\]
Since 
$$J_0:=5-2\sqrt{6}\in(0,1) \Rightarrow \frac{J_0}{(1-J_0)^2}=\frac18,$$ so that $$J<J_0\big(\frac{h}{4}\big)^{\sigma_0+1}, \Rightarrow  \frac{J}{(1-J)^2}<\frac{1}{8}\Big(\frac{h}{4}\Big)^{\sigma_0+1}.$$
The conclusion follows from the bound
\[c\leq\frac{1}{2\e\log(\frac4h)}\leq\frac{1}{4\e\log2}<\frac{1}{2}.\]

\end{proof}

\section{Conclusion}\label{sec:conclusion}
The mean-field model considered along this paper is
a standard equation capturing the spiking population rate of a local neural circuit \cite{deco2008}. While not
specifically a model of any particular brain region, it
 describes a population of self recurrent excitatory LIF neurons receiving stochastic Poisson spike trains. Although the mean-field equation (\ref{eq:LIF}) is widespread among physicists, it has received only little attention by mathematicians, and there is nowadays, no identified mathematical framework to study its solution properties, see \cite{DH, DH02} for a first step in that direction. 
It has thus become necessary to investigate systematically the conditions under which the solution to the mean-field equation exists and to understand its stability properties.

In the mean-field limit, the level of recurrent excitation is control by a parameter $J$ which reflects the average number of connexion per cell. Interestingly, this parameter plays a critical part in the emergence of a finite time blow-up of the solution \cite{DH02}. This effect was first noticed in \cite{deville01} for the perfect integrate-and-fire, observed numerically with leaky integrate-and-fire neurons \cite{cascade01,cascade02}, and soon theoretically explained in \cite{DH02} using similar ideas to  \cite{carrillo01}. There is extensive numerical evidence that the blow-up of the mean-field equation is nothing but the emergence of synchrony patterns of firing across neurons. 

An important result that has been proved in \cite{deville01} is the existence and stability of a unique stationary state
for a moderate coupling scenario, i.e. for moderate values of $J$, the connectivity parameter. When the average number of connexions is not too big ($J<1$), the asynchronous state of a network of perfect integrate-and-fire neurons is stable.  
Our paper extends the stability property to networks with cells having a leaky membrane potential. Unfortunately, if we have been able to extend the existence of a steady state, the uniqueness and stability only hold for weak coupling ($J\ll1$).

Mean-field equations have gain intensive visibility over the past decades, however, most of the work has been done with the diffusion approximation equation. Assuming $h$ small enough, formal computations give:
\begin{equation*}
p(t,v)  - p(t,v-h) = h\dfrac{\partial}{\partial v} p(t,v)  -  \dfrac{h^2}{2} \dfrac{\partial^2}{\partial v^2}p(t,v) +o(h^2).
\end{equation*}
Plugging this second order approximation into the mean-field equation~\eqref{eq:LIF} as in~\cite{sirovich} leads to the diffusive PDE presented in \cite{B02,BH01} and studied mathematically in a sequel of papers \cite{carrillo01, perthame04, caceres2016,caceres2017,carrillo13,CarrilloPerthameSalortSmets}. Although the diffusion equation is more common in the literature - several textbooks dedicate a chapter to it \cite{Bres02,Ermentrout,gerstner,gerstner2014neuronal} -  and has the advantage to offer a clear expression of the steady state, recent modeling discussions suggested that it is not an appropriate description for most neural networks \cite{Iyer2013}. In any case, it seems crucial to us to relate our theoretical findings to the mathematical results established for the diffusion approximation. 

We first note that we get the same type of results for the stability of the steady state equation in the weakly coupled case ($J\ll1$). A difference should nonetheless be noted, for the diffusion equation, the exponential stability is only local  \cite{CarrilloPerthameSalortSmets}, while it is global in our case. Furthermore, with the diffusion equation, the global stability can not arise since it may blow up for a certain class of initial condition \cite{carrillo01}.
On the other hand, similar open issues hold for moderate coupling, where no precise conclusion can be formulated.
For the two models, depending on connectivity regimes, there can exist no steady state, one steady state, or at least two steady states.t
Numerically, both in the diffusion or non-diffusion scenario, the same steady state is always observed, suggesting that there is only one stable fixed point. 
Note that for strong coupling, for both mean-field equations, the steady-state does not exist, and obviously its stability property is not an issue  \cite{carrillo01,DH02}.

Probably, the most straightforward discussion that we should be having is about the stability and uniqueness of the steady state for moderate coupling. While the existence of a unique stable steady state has been addressed for an excitatory network of non leaky cells \cite{deville01}, it is still an open issue for the LIF. Another important discussion should address the discontinuous mechanism proposed by  \cite{deville01} to restart the flow of the solution after the blow-up. While there is no intuitive difficulties in proposing a similar discontinuous mapping for the PDE considered along this paper,  defining a solution at the blow-up time and extending it beyond the blow-up is not a trivial task and it will be the subject of a new research.

Note finally that in the present paper we considered excitatory networks.
A natural extension would be to consider networks made up of both excitatory and inhibitory neurons, as in~\cite{DH}.
In that case there are positive and negative jumps in the LIF model, and the presence of that latter leads to a mean-field equation set on $(-\infty,1)$ instead of $(0,1)$.
This prevents Doeblin's condition to be satisfied and one should instead use Harris's theorem, which extends Doeblin's ideas to the unbounded state space setting, see for instance~\cite{HairerMattingly,MeynTweedie}, and also~\cite{Bansaye2019,Cloez} for recent extensions to non-conservative semigroups.

\

\begin{appendices}

\section{Well-posedness in the linear case}\label{app:linear}

In this appendix we prove that the semigroup $(M_t)_{t\geq0}$ built in Section~\ref{sec:linear_case} provides the solutions to Equation~\eqref{eq:linear}.

\begin{proposition}\label{prop:linear_wellposed}
For every initial measure $\mu_0,$ the family $(\mu_0 M_t)_{t\geq0}$ is the unique solution to Equation~\eqref{eq:linear}.
\end{proposition}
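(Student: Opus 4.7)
The plan is to carry out a regularization argument parallel to the nonlinear construction of Section~\ref{sec:globalwp}. I would introduce the continuous approximations $\chi_n$ of $\1_{[1-h,1]}$ defined there, together with the regularized operators
\[\B^n f(v) := f(v+h)(1-\chi_n(v)) + f(v_r)\chi_n(v) - f(v),\qquad \A^n := -v\partial_v + \sigma_0 \B^n.\]
The crucial feature is that $\B^n$ maps $C^1([0,1])$ into $C^1([0,1])$, so the regularity defect of $\B$ is avoided. By replaying Lemma~\ref{lm:fixedpoint} with $\chi_n$ in place of $\1_{[1-h,1]}$ and $1-\chi_n$ in place of $\1_{[0,1-h)}$ one builds a conservative positive semigroup $(M^n_t)_{t\geq 0}$ on $C([0,1])$. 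A second fixed point argument in a $C^1$-Banach space, analogous to the proof of Lemma~\ref{lm:A^n_t}, shows that for $f\in C^1([0,1])$ the map $t\mapsto M^n_t f$ is continuously differentiable with $\partial_t M^n_t f = \A^n M^n_t f = M^n_t\A^n f.$ By duality this yields, for every $\mu \in \M([0,1])$ and $f \in C^1([0,1])$,
\begin{equation}\label{eq:reglin-plan}
(\mu M^n_t)(f) = \mu(f) + \int_0^t (\mu M^n_s)(\A^n f)\,ds.
\end{equation}

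The next step is to show that $M^n_t f \to M_t f$ pointwise on $[0,1]$, uniformly on $[0,T]\times[0,1]$ for every $T>0$ and every $f\in C([0,1])$. Subtracting the fixed point identities defining $M^n$ and $M$ and applying Gronwall's lemma, this reduces to controlling
\[\int_0^t |(\chi_n - \1_{[1-h,1]})(ve^{-\tau})|\,d\tau,\]
which is exactly the type of quantity $\Omega_n$ estimated in the proof of Lemma~\ref{lm:local}: $|\chi_n - \1_{[1-h,1]}|$ is supported on a strip of width $h/n$ and the change of variable $w=ve^{-\tau}$ converts the integral into a Lebesgue integral over that strip with a bounded Jacobian, so the quantity is $O(\log(1+h/n))=o(1)$. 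With this convergence in hand, passing to the limit $n\to\infty$ in~\eqref{eq:reglin-plan} applied to $\mu=\mu_0$ produces the weak formulation for $(\mu_0 M_t)_{t\geq0}$, and the weak*-continuity of $t\mapsto\mu_0 M_t$ follows from the continuity of $t\mapsto M_t f(v)$ together with dominated convergence. This proves existence.

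For uniqueness, let $(\mu_t)_{t\geq0}$ be any solution to Equation~\eqref{eq:linear}, and fix $T_0>0$ and $f\in C^1([0,1])$. Since $s\mapsto M^n_{T_0-s}f$ is $C^1$ in both variables and the weak formulation holds for any $C^1$ test function, one can justify by Fubini and approximation of the time integral by Riemann sums of $C^1$ integrands that $s\mapsto \mu_s(M^n_{T_0-s}f)$ is absolutely continuous on $[0,T_0]$ with
\[\frac{d}{ds}\mu_s(M^n_{T_0-s}f) = \mu_s(\A M^n_{T_0-s}f) - \mu_s(\A^n M^n_{T_0-s}f) = \sigma_0\,\mu_s\bigl[(\B-\B^n)M^n_{T_0-s}f\bigr].\]
Integrating from $0$ to $T_0$ gives
\[\mu_{T_0}(f) - \mu_0(M^n_{T_0}f) = \sigma_0\int_0^{T_0}\mu_s\bigl[(\B-\B^n)M^n_{T_0-s}f\bigr]\,ds,\]
and letting $n\to\infty$ the right-hand side tends to $0$ (by the same $\Omega_n$-type estimate, now integrated against $\mu_s$ and using that the strip is swept along characteristics), while the left converges to $\mu_{T_0}(f) - \mu_0(M_{T_0}f)$. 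Hence $\mu_{T_0} = \mu_0 M_{T_0}.$

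The main obstacle is precisely the convergence $\|M^n_t f - M_t f\|_\infty \to 0$ and the companion estimate for $\mu_s[(\B-\B^n)M^n_{T_0-s}f]$ integrated in $s$: the difference $\B-\B^n$ is of order $\|f\|_\infty$ on a narrow strip around $v=1-h$, so one cannot obtain a pointwise small bound, only an integrated-in-time small bound after using that characteristics cross the strip in a time of order $h/n$. This is exactly the technical heart of the proof, and it is handled by the very same argument used for the quantity $\Omega_n$ in Section~\ref{sec:globalwp}.
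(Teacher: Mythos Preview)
Your approach is essentially the one carried out in the paper's appendix: the same regularization by $\chi_n$, the same construction of $(M^n_t)$ with $C^1$-regularity and $\partial_t M^n_t f=\A^n M^n_t f=M^n_t\A^n f$, the same uniform convergence $M^n_t f\to M_t f$ via Gr\"onwall and the estimate $\int_0^t\1_{[1-h-h/n,1-h]}(ve^{-\tau})\,d\tau\leq\log\bigl(1+\tfrac{h}{n(1-2h)}\bigr)$, and the same duality trick for uniqueness.

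Two small points where the paper proceeds slightly differently. First, for uniqueness the paper differentiates $s\mapsto\int_0^s\mu_\tau M^n_{t-s}f\,d\tau$ rather than $s\mapsto\mu_s(M^n_{t-s}f)$: the extra time integration lets one plug in the weak formulation for $\mu_s$ (with the $C^1$ test function $M^n_{t-s}f$) \emph{after} the differentiation, sidestepping the product-rule justification you gesture at; after integrating in $s$ and passing to the limit one gets $\int_0^t\mu_\tau f\,d\tau=\int_0^t\mu_0 M_s f\,ds$, and a final differentiation in $t$ concludes. Second, the limit $n\to\infty$ in the uniqueness step is handled by plain dominated convergence---$(\B-\B^n)g$ is supported on $[1-h-h/n,1-h)$, so $(\B-\B^n)M^n_{t-s}f(v)\to0$ for every fixed $v$ and is bounded by $2\|f\|_\infty$---not by a characteristics/$\Omega_n$ argument: there are no characteristics available here, since $(\mu_s)$ is an arbitrary weak*-continuous family and need not be transported along any flow.
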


The discontinuity of the indicator functions which appear in the operator $\B$ is an obstacle for proving directly Proposition~\ref{prop:linear_wellposed}.
To work around this difficulty, we use a regularization (see~\cite{EversHilleMuntean15} for a similar approach).
We approximate the indicator function $\1_{[1-h,1]}$ by
\[\chi_n(v):=\left\{\begin{array}{ll}
0&\text{if}\ v\leq 1-h-\frac hn,
\vspace{2mm}\\
1+\dfrac nh(v-1+h)\quad&\text{if}\ 1-h-\frac hn\leq v\leq 1-h,
\vspace{2mm}\\
1&\text{if}\ v\geq1-h,
\end{array}\right.\]
where $n$ belongs to $\N^*.$
The family $(\chi_n)_{n\geq1}$ is a decreasing sequence of continuous functions which converges pointwise to $\1_{[1-h,1]}.$
We define the associated regularized operators
\[\B^n f(v):=f(v+h)(1-\chi_n(v))+f(v_r)\chi_n(v)-f(v)\qquad\text{and}\qquad\A^n f(v):=-vf'(v)+\sigma_0\,\B^nf(v).\]
As for $\A$ we have the conservation property for $\A^n.$
But contrary to $\A,$ for $f$ element of $C^1([0,1])$ we have $\A^nf$ belongs to $ C([0,1]),$
and this allows us to build a measure solution to the regularized equation by duality.

\medskip

Consider the regularized dual equation
\begin{equation}\label{eq:dual_reg}
\partial_tf(t,v)+v\partial_vf(t,v)+\sigma_0f(t,v)=\sigma_0\big[f(t,v+h)(1-\chi_n(v))+f(t,v_r)\chi_n(v)\big],
\end{equation}
with the initial condition $f(0,\cdot)=f_0.$
As for the non-regularized case, this equation is well-posed on the space of continuous functions.
But it is also well-posed in the space of continuously differentiable functions.

\begin{lemma}\label{lm:lin_reg}
For $f_0$ an element taken from $C([0,1]),$ there exists a unique $f$ belonging to $ C(\R_+\times[0,1])$ which satisfies
\[f(t,v)=f_0(v\e^{-t})\e^{-\sigma_0t}+\sigma_0\int_0^t\e^{-\sigma_0\tau}\big[f(t-\tau,\e^{-\tau}v+h)(1-\chi_n(\e^{-\tau}v))+f(t-\tau,v_r)\chi_n(e^{-\tau}v)\big]d\tau.\]
Additionally
\begin{itemize}
\item if $f_0=\1$ then $f=\1,$
\item if $f_0\geq0$ then $f\geq0,$
\item if $f_0\in C^1([0,1])$ then $f\in C^1(\R_+\times[0,1])$ and $f$ satisfies~\eqref{eq:dual_reg}.
\end{itemize}
\end{lemma}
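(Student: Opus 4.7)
The plan is to apply the Banach fixed point theorem to the mapping defined by the right-hand side of the integral equation, in the exact same pattern as Lemma~\ref{lm:fixedpoint}. For any $T>0$, on the Banach space $C([0,T]\times[0,1])$ with the supremum norm, define the affine mapping $\Gamma$ by
\[\Gamma f(t,v) := f_0(v\e^{-t})\e^{-\sigma_0 t} + \sigma_0 \int_0^t \e^{-\sigma_0 \tau}\big[f(t-\tau,\e^{-\tau}v+h)(1-\chi_n(\e^{-\tau}v)) + f(t-\tau,v_r)\chi_n(\e^{-\tau}v)\big]\,d\tau.\]
Its linear part has operator norm at most $1-\e^{-\sigma_0 T}<1$, so $\Gamma$ is a contraction irrespective of $T$. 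This yields a unique fixed point on $[0,T]\times[0,1]$, and gluing these fixed points (by uniqueness on overlapping intervals) produces the unique element of $C(\R_+\times[0,1])$.

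For the first two additional properties I would argue exactly as in Lemma~\ref{lm:fixedpoint}. If $f_0=\1$, substituting $f\equiv\1$ into the integral equation and using $(1-\chi_n)+\chi_n=\1$ gives $\Gamma\1 = \e^{-\sigma_0 t} + (1-\e^{-\sigma_0 t}) = 1$, so uniqueness forces $f=\1$. If $f_0\geq0$, the positive cone is invariant under $\Gamma$ because $0\leq \chi_n\leq 1$ and all other weights are nonnegative, hence the fixed point lies in this cone.

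The last assertion — that $f_0\in C^1([0,1])$ implies $f\in C^1(\R_+\times[0,1])$ — is the main obstacle and will require a second fixed point argument in a smaller Banach space. I would work on $\{f\in X^T:\partial_t f,\partial_v f\in X^T\}$ endowed with a $C^1$-type norm, following closely the computation done in Lemma~\ref{lm:A^n_t}. Formally differentiating $\Gamma f$ in $v$ produces a new term involving $\chi_n'$ (which is the bounded, measurable function equal to $n/h$ on $[1-h-h/n,1-h]$ and zero elsewhere). The delicate point is twofold: checking that $\partial_v\Gamma f$ is still continuous in $(t,v)$ despite $\chi_n'$ being discontinuous — this follows because the integration in $\tau$ averages $\chi_n'(\e^{-\tau}v)$ over a set whose measure varies continuously in $v$ — and verifying that $\Gamma$ remains a contraction in the $C^1$-norm on sufficiently small time intervals, where the contribution $\sim Tn/h$ from the $\chi_n'$ term is dominated. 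Once the fixed point is shown to be $C^1$, differentiating the integral equation in $t$ and combining with $v\partial_v$ recovers equation~\eqref{eq:dual_reg} classically, and the solution is then extended to $\R_+$ by iteration.
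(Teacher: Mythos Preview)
Your proposal is correct and follows essentially the same approach as the paper: Banach fixed point in $C([0,T]\times[0,1])$ for existence/uniqueness and the first two bullets (identical to Lemma~\ref{lm:fixedpoint}), followed by a second fixed point argument in a $C^1$-type space for the last bullet, with the $\chi_n'$ term handled exactly as you describe. Your estimate of the extra contribution as $\sim Tn/h$ is in fact the right one (yielding an $n$-dependent time step for the $C^1$ contraction, which is harmless since $n$ is fixed here), and the verification of~\eqref{eq:dual_reg} by differentiating the Duhamel form in $t$ is precisely what the paper does.
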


\begin{proof}
For the existence and uniqueness of a solution as well as the first two points we proceed as for Lemma~\ref {lm:fixedpoint} by applying the Banach fixed point theorem to the mapping
\[\Gamma f(t,v):=f_0(v\e^{-t})\,\e^{-\sigma_0t}+\sigma_0\int_0^t\e^{-\sigma_0\tau}\big[f(t-\tau,v\e^{-\tau}+h)(1-\chi_n(v\e^{-\tau}))+f(t-\tau,v_r)\chi_n(v\e^{-\tau})\big]\,d\tau.\]

It remains to check that when $f_0$ is of class $C^1$ then the same holds for $f.$
To do so we prove that when $f_0$ is an element of $ C^1([0,1])$ the mapping $\Gamma$ is a contraction in the Banach space $C^1([0,T]\times [0,1])$ endowed with the norm
$$\|f\|_{C^1}:=\|f\|_\infty+\|\partial_tf\|_\infty+\|\partial_vf\|_\infty$$ when $T$ is small enough.
We have
\begin{equation}\label{eq:d_tGamma}
\partial_t\Gamma f(t,v)=\A^nf_0(v\e^{-t})\,\e^{-\sigma_0t}+\sigma_0\int_0^t\e^{-\sigma_0\tau}\big[\partial_tf(t-\tau,v\e^{-\tau}+h)(1-\chi_n(v\e^{-\tau}))+\partial_tf(t-\tau,v_r)\chi_n(v\e^{-\tau})\big]\,d\tau.
\end{equation}
and
\begin{align*}
\partial_v\Gamma f(t,v)=&f'_0(v\e^{-t})\,\e^{-(1+\sigma_0)t}+\sigma_0\int_0^t\e^{-(1+\sigma_0)\tau}\partial_vf(t-\tau,v\e^{-\tau}+h)(1-\chi_n(v\e^{-\tau}))\,d\tau\\
&\qquad+\sigma_0\int_0^t\e^{-(1+\sigma_0)\tau}\frac nh\1_{[1-h-\frac hn,1-h]}(v\e^{-\tau})\big[f(t-\tau,v_r)-f(t-\tau,v\e^{-\tau}+h)\big]\,d\tau
\end{align*}
so when $f_0=0$ we have
\[\|\Gamma f\|_{C^1}\leq(1-\e^{-\sigma_0T})\|f\|_{C^1}+2\sigma_0\frac{n}{h}\log\Big(1+\frac{h}{n(1-h-\frac hn)}\Big)T\|f\|_\infty\leq\frac{1-2h+2\sigma_0}{1-2h}T\|f\|_{C^1}\]
and $\Gamma$ is a contraction in $C^1([0,T]\times [0,1])$ when $$T<\frac{1-2h}{1-2h+2\sigma_0}.$$
This ensures that the unique fixed point $f$ of $\Gamma$ belongs to $C^1([0,T]\times [0,1]).$
To check that $f$ satisfies~\eqref{eq:dual_reg} we can differentiate the alternative formulation of $\Gamma f$
\[\Gamma f(t,v)=f_0(v\e^{-t})\,\e^{-\sigma_0t}+\sigma_0\int_0^t\e^{-\sigma_0(t-\tau)}\big[f(\tau,v\e^{-(t-\tau)}+h)(1-\chi_n(v\e^{-(t-\tau)}))+f(\tau,v_r)\chi_n(v\e^{-(t-\tau)})\big]\,d\tau\]
with respect to $t$ and we get
\begin{align*}
\partial_t\Gamma f(t,v)=&-vf'_0(v\e^{-t})\,\e^{-(1+\sigma_0)t}-\sigma_0f_0(v\e^{-t})\,\e^{-\sigma_0t}+\sigma_0\big[f(t,v+h)(1-\chi_n(v))+f(t,v_r)\chi_n(v)\big]\\
&\quad-\sigma_0^2\int_0^t\e^{-\sigma_0(t-\tau)}\big[f(\tau,v\e^{-(t-\tau)}+h)(1-\chi_n(v\e^{-(t-\tau)}))+f(\tau,v_r)\chi_n(v\e^{-(t-\tau)})\big]\,d\tau\\
&\qquad-\sigma_0\int_0^tv\e^{-(1+\sigma_0)\tau}\partial_vf(t-\tau,v\e^{-\tau}+h)(1-\chi_n(v\e^{-\tau}))\,d\tau\\
&\quad\qquad-\sigma_0\int_0^tv\e^{-(1+\sigma_0)\tau}\frac nh\1_{[1-h-\frac hn,1-h]}(v\e^{-\tau})\big[f(t-\tau,v_r)-f(t-\tau,v\e^{-\tau}+h)\big]\,d\tau.
\end{align*}
So we have
\[\partial_t\Gamma f(t,v)+v\partial_v\Gamma f(t,v)+\sigma_0\Gamma f(t,v)=\sigma_0\big[f(t,v+h)(1-\chi_n(v))+f(t,v_r)\chi_n(v)\big]\]
and the fixed point satisfies~\eqref{eq:dual_reg}.

\end{proof}

With this result we define a conservative and positive contraction semigroup $(M^n_t)_{t\geq0}$ on $C([0,1])$ by setting $$M^n_tf_0=f(t,\cdot).$$
This regularized semigroup enjoys more properties than the non-regularized one.

\begin{lemma}
The semigroup $(M^n_t)_{t\geq0}$ is strongly continuous, meaning that for all $f$ taken from $C([0,1])$
\[\|M^n_tf-f\|_\infty\xrightarrow[t\to0]{}0,\]
Additionally for all $f$ belonging to $ C^1([0,1])$ we have
\begin{equation}\label{eq:d_tM^n}
\partial_tM^n_tf=\A^nM^n_tf=M^n_t\A^nf
\end{equation}
and
\[\bigg\|\frac1t\big(M^n_tf-f\big)-\A^nf\bigg\|_\infty\xrightarrow[t\to0]{}0.\]
\end{lemma}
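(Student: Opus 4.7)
The plan has three separate statements to prove, and they build on each other. I would set them up in the order: strong continuity first, then the two-sided generator identity, then the uniform convergence of the difference quotient.

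For \emph{strong continuity}, I would write $M^n_t f$ via the Duhamel-type fixed-point representation used in Lemma~\ref{lm:lin_reg}. For any $f\in C([0,1])$,
\[
M^n_tf(v)=f(v\e^{-t})\e^{-\sigma_0t}+\sigma_0\int_0^t\e^{-\sigma_0\tau}\big[M^n_{t-\tau}f(v\e^{-\tau}+h)(1-\chi_n(v\e^{-\tau}))+M^n_{t-\tau}f(v_r)\chi_n(v\e^{-\tau})\big]d\tau.
\]
The second term is bounded by $(1-\e^{-\sigma_0t})\|f\|_\infty$, which goes to zero. For the first term, $f$ is uniformly continuous on $[0,1]$, so $\sup_{v}|f(v\e^{-t})-f(v)|\to0$ as $t\to0$. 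Together these give $\|M^n_tf-f\|_\infty\to0$.

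For the \emph{generator identity} $\partial_tM^n_tf=\A^n M^n_tf=M^n_t\A^n f$ when $f\in C^1([0,1])$, the first equality is essentially already in Lemma~\ref{lm:lin_reg}: the lemma guarantees $(t,v)\mapsto M^n_tf(v)$ is of class $C^1$ and satisfies the regularized dual equation~\eqref{eq:dual_reg}, which rearranges to $\partial_tM^n_tf=\A^n M^n_tf$. For the commutation $\A^nM^n_tf=M^n_t\A^n f$, I would use the semigroup property and linearity. Since $M^n_sf\in C^1([0,1])$ for every $s\geq0$ (again by Lemma~\ref{lm:lin_reg}), the map $s\mapsto M^n_t(M^n_sf)$ is differentiable at $s=0$ with derivative $M^n_t(\A^nf)$, because $M^n_t$ is a bounded linear operator on $C([0,1])$ and the difference quotient $\frac1s(M^n_sf-f)$ converges in $C([0,1])$ to $\A^n f$ (this is essentially the third claim, proved independently below). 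On the other hand, by the semigroup property $M^n_t(M^n_sf)=M^n_{t+s}f$, and differentiating this at $s=0$ yields $\A^n M^n_tf$. The two limits must agree, giving the desired commutation.

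For the \emph{uniform convergence of the difference quotient}, I would use the identity
\[
M^n_tf-f=\int_0^t\partial_sM^n_sf\,ds=\int_0^t M^n_s\A^nf\,ds,
\]
valid for $f\in C^1([0,1])$ by the first part of the generator identity (combined with the alternative form $\partial_sM^n_sf=M^n_s\A^nf$, which can equivalently be obtained by noting that $M^n_s\A^nf$ is continuous in $s$ and integrating $\partial_s(M^n_sf)=\A^nM^n_sf$; but to avoid circularity here I would establish first only $\partial_sM^n_sf=\A^nM^n_sf$ directly from Lemma~\ref{lm:lin_reg}, then use the Duhamel form above). Then
\[
\left\|\frac{M^n_tf-f}{t}-\A^nf\right\|_\infty\leq\frac1t\int_0^t\|M^n_s\A^nf-\A^nf\|_\infty\,ds\leq\sup_{s\in[0,t]}\|M^n_s\A^nf-\A^nf\|_\infty,
\]
which tends to zero as $t\to0$ by the strong continuity applied to the continuous function $\A^nf$.

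The only delicate point is the order of the three items, because the commutation $\A^nM^n_tf=M^n_t\A^nf$ and the strong differentiability of $t\mapsto M^n_tf$ feed into each other. The cleanest route is: (i) strong continuity; (ii) $\partial_tM^n_tf=\A^nM^n_tf$ directly from Lemma~\ref{lm:lin_reg}; (iii) deduce the uniform convergence of the difference quotient from the integral identity and strong continuity applied to $\A^nf$; (iv) conclude the commutation by applying the bounded linear operator $M^n_t$ to the limit of $\frac1s(M^n_sf-f)$ and comparing with the semigroup relation $M^n_{t+s}f=M^n_s(M^n_tf)$.
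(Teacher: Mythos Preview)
Your treatment of strong continuity and of the first equality $\partial_tM^n_tf=\A^nM^n_tf$ is fine and coincides with the paper's. The problem is the logical ordering of the remaining two items. In your step (iii) you write
\[
M^n_tf-f=\int_0^t M^n_s\A^nf\,ds
\qquad\text{and}\qquad
\left\|\frac{M^n_tf-f}{t}-\A^nf\right\|_\infty\leq\frac1t\int_0^t\|M^n_s\A^nf-\A^nf\|_\infty\,ds,
\]
and you justify the right-hand side by ``strong continuity applied to $\A^nf$''. But at this stage you have only established $\partial_sM^n_sf=\A^nM^n_sf$, so the integrand is $\A^nM^n_sf$, not $M^n_s\A^nf$. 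Replacing one by the other is precisely the commutation you are trying to derive in step (iv), so the argument is circular as written. And strong continuity alone does not give $\|\A^nM^n_sf-\A^nf\|_\infty\to0$, since $\A^n$ contains the unbounded term $-v\partial_v$.

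The paper breaks this loop in a different and more direct way: in the proof of Lemma~\ref{lm:lin_reg} it computes $\partial_t\Gamma f$ explicitly (equation~\eqref{eq:d_tGamma}) and observes that $\partial_tM^n_tf$ is itself a fixed point of $\Gamma$ with initial datum $\A^nf_0$. Uniqueness of the fixed point then yields $\partial_tM^n_tf=M^n_t\A^nf$ immediately, with no appeal to the difference quotient. Once this is in hand, your estimate for (iii) becomes legitimate, since the integrand really is $M^n_s\A^nf$ and strong continuity applies. If you prefer to stay within your scheme, you can also close the gap by invoking the joint $C^1$ regularity of $(s,v)\mapsto M^n_sf(v)$ from Lemma~\ref{lm:lin_reg}: uniform continuity of $\partial_vM^n_sf$ on $[0,T]\times[0,1]$ gives $\|\A^nM^n_sf-\A^nf\|_\infty\to0$ directly. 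Either way, the missing ingredient is an \emph{independent} proof of $\partial_tM^n_tf=M^n_t\A^nf$ (or of $\A^nM^n_sf\to\A^nf$ in sup norm) before you run the difference-quotient argument.
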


\begin{proof}
The strong continuity follows from the fact that a continuous function on a compact set is uniformly continuous.

The first equality in~\eqref{eq:d_tM^n} is an immediate consequence of Lemma~\ref{lm:lin_reg}.
For the second equality we deduce from~\eqref{eq:d_tGamma} that $\partial_t M^n_tf$ is the unique fixed point of $\Gamma$ associated to $f_0=\A^nf,$ so
$$\partial_t M^n_tf=M^n_t\A^nf.$$

For the last point we use the strong continuity to write for $f$ an element of $ C^1([0,1])$
\[\bigg\|\frac1t\big(M^n_tf-f\big)-\A^nf\bigg\|_\infty\leq\frac1t\int_0^t\|M^n_s\A^nf-\A^nf\|_\infty\,ds\xrightarrow[t\to0]{}0,\]
since $\A^nf$ belongs to $ C([0,1]).$
\end{proof}

Now we can define by duality a semigroup on $$\M([0,1])=C([0,1])'.$$
For $\mu$ an element of $\M([0,1])$ and $t\geq0$ we define $\mu M^n_t$ by
\[\forall f\in C([0,1]),\qquad (\mu M^n_t)f=\mu(M^n_tf).\]
The family $(M^n_t)_{t\geq0}$ is then a positive and conservative contraction semigroup on $\M([0,1]),$ endowed with the total variation norm.
Additionally for all $\mu_0$ belonging to $\M([0,1])$ the family $(\mu_0 M^n_t)_{t\geq0}$ is a measure solution to the regularized leaky integrate-and-fire equation.

\begin{lemma}
For all $\mu$ taken from $\M([0,1])$ the application $$t\mapsto\mu M^n_t$$ is weak*-continuous, and for all $f$ belonging to $ C^1([0,1])$ and $t\geq0$
\begin{equation}\label{eq:weak_reg}
\mu M^n_t f = \mu f+\int_0^t\mu M^n_s\A^nf\,ds.
\end{equation}
\end{lemma}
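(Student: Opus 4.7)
The plan is to derive both assertions directly from the properties of $(M^n_t)_{t\geq0}$ on $C([0,1])$ already established just above the statement: it is a $\|\cdot\|_\infty$-contraction semigroup, strongly continuous, and on $C^1([0,1])$ it satisfies the infinitesimal identity $\partial_t M^n_tf = \A^n M^n_tf = M^n_t\A^n f$ together with the quantitative estimate $\|\tfrac1t(M^n_tf-f)-\A^n f\|_\infty \to 0$. Everything is then pushed to $\M([0,1])$ by duality.

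For the weak*-continuity, for any $f\in C([0,1])$ and $s,t\geq 0$ I will bound
\[|(\mu M^n_t)f-(\mu M^n_s)f| = |\mu(M^n_tf-M^n_sf)| \leq \|\mu\|_{TV}\,\|M^n_tf-M^n_sf\|_\infty,\]
and invoke the strong continuity of $(M^n_t)_{t\geq0}$ to conclude. This uses nothing beyond what is already proven.

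For the integral identity, the key step is to upgrade $\partial_t M^n_tf = M^n_t\A^nf$ from a pointwise to a $\|\cdot\|_\infty$-strong derivative in $C([0,1])$. I will use the semigroup property together with contractivity: for $h>0$,
\[\Bigl\|\tfrac{1}{h}(M^n_{t+h}f-M^n_tf)-M^n_t\A^nf\Bigr\|_\infty = \Bigl\|M^n_t\bigl[\tfrac{1}{h}(M^n_hf-f)-\A^nf\bigr]\Bigr\|_\infty \leq \Bigl\|\tfrac{1}{h}(M^n_hf-f)-\A^nf\Bigr\|_\infty \to 0,\]
and similarly for $h<0$ using that the map $s\mapsto M^n_s\A^nf$ is $\|\cdot\|_\infty$-continuous (strong continuity applied to the continuous function $\A^nf$). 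The fundamental theorem of calculus in the Banach space $(C([0,1]),\|\cdot\|_\infty)$ then gives
\[M^n_tf - f = \int_0^t M^n_s\A^nf\,ds,\]
understood as a Bochner (or Riemann) integral of a $\|\cdot\|_\infty$-continuous path. Applying the continuous linear functional $\mu$ commutes with the integral and yields
\[(\mu M^n_t)f-\mu f = \mu\!\left(\int_0^t M^n_s\A^nf\,ds\right) = \int_0^t\mu(M^n_s\A^nf)\,ds = \int_0^t(\mu M^n_s)\A^nf\,ds,\]
which is the desired formula.

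I do not expect a serious obstacle here; the main subtlety is the justification of the swap of $\mu$ with the time integral. The cleanest route is the Banach-space Bochner formulation sketched above. A fully elementary alternative is to integrate the pointwise identity $\partial_sM^n_sf(v) = M^n_s\A^nf(v)$ in $s$ to obtain $M^n_tf(v)-f(v) = \int_0^t M^n_s\A^nf(v)\,ds$ for each $v$, and then apply Fubini's theorem to interchange $\mu$ with $\int_0^tds$, which is legitimate because $(s,v)\mapsto M^n_s\A^nf(v)$ is jointly continuous on $[0,t]\times[0,1]$ and hence uniformly bounded.
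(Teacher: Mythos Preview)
Your proposal is correct and essentially matches the paper's proof. The paper argues the weak*-continuity via pointwise continuity of $t\mapsto M^n_tf(v)$ plus dominated convergence, and obtains~\eqref{eq:weak_reg} by integrating the pointwise identity $\partial_sM^n_sf=M^n_s\A^nf$ in time and then applying Fubini against $\mu$---precisely the ``elementary alternative'' you spell out at the end; your main Bochner-integral route is a slightly more functional-analytic repackaging of the same idea.
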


\begin{proof}
The continuity of $$t\mapsto M^n_tf(v)$$ for all $f$ belonging to $C([0,1])$ and $v$taken from $[0,1]$ and the dominated convergence theorem ensure the weak*-continuity of $$t\mapsto\mu M^n_t.$$

For the second part of the lemma it suffices to integrate the identity $$\partial_sM^n_sf=M^n_s\A^nf$$ in time on $[0,t]$ and then in space on $[0,1]$ against the measure $\mu.$
The conclusion follows from the Fubini's theorem.
\end{proof}

It remains to pass to the limit when $n$ goes to infinity to get that the family $(\mu_0 M_t)_{t\geq0}$ is a measure solution to Equation~\eqref{eq:linear}.

\begin{lemma}\label{lm:pass_limit_lin}
For all $T>0$ we have
\[\sup_{0\leq t\leq T}\sup_{\|f\|_\infty\leq1}\|M^n_tf-M_tf\|_\infty\xrightarrow[n\to\infty]{}0.\]
\end{lemma}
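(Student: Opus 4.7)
The natural plan is to write both $M_tf$ and $M^n_tf$ as fixed points of their respective Duhamel formulas (Lemma~\ref{lm:fixedpoint} and Lemma~\ref{lm:lin_reg}), subtract them, and control the difference by a Gr\"onwall argument where the forcing term comes purely from $\chi_n-\1_{[1-h,1]}$.

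Concretely, set $g_n(t,v):=M^n_tf(v)-M_tf(v)$. Subtracting the two integral equations and using $\1_{[0,1-h)}=1-\1_{[1-h,1]}$, the terms group as
\begin{align*}
g_n(t,v)=\ &\sigma_0\int_0^t\e^{-\sigma_0\tau}\big[g_n(t-\tau,\e^{-\tau}v+h)(1-\chi_n(\e^{-\tau}v))+g_n(t-\tau,v_r)\chi_n(\e^{-\tau}v)\big]d\tau\\
&+\sigma_0\int_0^t\e^{-\sigma_0\tau}\big(\chi_n(\e^{-\tau}v)-\1_{[1-h,1]}(\e^{-\tau}v)\big)\big[M_{t-\tau}f(v_r)-M_{t-\tau}f(\e^{-\tau}v+h)\big]d\tau.
\end{align*}
Since $(M_t)_{t\geq0}$ is a contraction on $C([0,1])$, the bracketed difference in the second integral is bounded by $2\|f\|_\infty\leq 2$, while $0\leq\chi_n,1-\chi_n\leq 1$ in the first integral.

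The forcing term is estimated geometrically: $\chi_n$ and $\1_{[1-h,1]}$ coincide outside the interval $[1-h-\frac{h}{n},1-h]$, so for every $v\in[0,1]$ the time the flow $\tau\mapsto\e^{-\tau}v$ spends in this window is at most
\[\int_0^t\1_{[1-h-\frac{h}{n},1-h]}(\e^{-\tau}v)\,d\tau\leq\log\!\Big(1+\tfrac{h}{n(1-h-h/n)}\Big),\]
uniformly in $v$ and in $t\in[0,T]$ (this is exactly the same kind of estimate as the one producing the Cauchy bound in Lemma~\ref{lm:local}). Setting $\varepsilon_n:=\log\bigl(1+\tfrac{h}{n(1-h-h/n)}\bigr)\to0$, taking the supremum over $v\in[0,1]$ and over $\|f\|_\infty\leq 1$ yields
\[\Phi_n(t):=\sup_{\|f\|_\infty\leq1}\|g_n(t,\cdot)\|_\infty\leq \sigma_0\int_0^t\Phi_n(s)\,ds+2\sigma_0\,\varepsilon_n,\]
after changing variable $s=t-\tau$ and using $\e^{-\sigma_0\tau}\leq 1$.

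Gr\"onwall's lemma then gives $\Phi_n(t)\leq 2\sigma_0\varepsilon_n\,\e^{\sigma_0 t}$, so
\[\sup_{0\leq t\leq T}\sup_{\|f\|_\infty\leq1}\|M^n_tf-M_tf\|_\infty\leq 2\sigma_0\,\e^{\sigma_0 T}\,\log\!\Big(1+\tfrac{h}{n(1-h-h/n)}\Big)\xrightarrow[n\to\infty]{}0,\]
which is the claim. The only nontrivial step is the uniform-in-$v$ bound on the sojourn time of the trajectory $\tau\mapsto\e^{-\tau}v$ inside the shrinking window $[1-h-\tfrac h n,1-h]$; once that explicit logarithmic bound is in hand, everything else is bookkeeping and a one-shot Gr\"onwall estimate.
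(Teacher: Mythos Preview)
Your proof is correct and follows essentially the same approach as the paper's own proof: subtract the two Duhamel formulas, bound the recursive part by $\sigma_0\int_0^t\|M_{t-\tau}f-M^n_{t-\tau}f\|_\infty\,d\tau$, bound the forcing term by $2\sigma_0$ times the sojourn time of $\tau\mapsto\e^{-\tau}v$ in the window $[1-h-\tfrac hn,1-h]$, and conclude by Gr\"onwall. The only cosmetic differences are that you write out the algebraic decomposition of $g_n$ explicitly, and you keep the sharper denominator $1-h-\tfrac hn$ in the logarithm whereas the paper simplifies it to $1-2h$.
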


\begin{proof}
From the definitions of $(M_t)_{t\geq0}$ and $(M^n_t)_{t\geq0}$ we get that for all $f$ element of $ C([0,1])$ such that $\|f\|_\infty\leq1$
\begin{align*}
\|M_tf-M^n_tf\|_\infty&\leq\sigma_0\int_0^t\|M_{t-\tau}f-M^n_{t-\tau}f\|_\infty d\tau+2\sigma_0\sup_{0\leq v\leq1}\int_0^t\1_{[1-h-\frac hn,1-h]}(\e^{-\tau}v)\,d\tau\\
&\leq\sigma_0\int_0^t\|M_{t-\tau}f-M^n_{t-\tau}f\|_\infty d\tau+2\sigma_0\log\bigg(1+\frac{h}{n(1-2h)}\bigg)
\end{align*}
and we conclude by the Gr\"onwall's lemma that
\[\|M_tf-M^n_tf\|_\infty\leq2\sigma_0\log\bigg(1+\frac{h}{n(1-2h)}\bigg)\,\e^{\sigma_0t}.\]
\end{proof}

\begin{proof}[Proof of Proposition~\ref{prop:linear_wellposed}]
Let $\mu_0$ be an element of $\M([0,1]).$
From Lemma~\ref{lm:pass_limit_lin} we deduce that $$\mu_0 M^n_t\to\mu_0 M_t$$ in the TV-norm when $n$ goes to infinity.
This allows us to pass to the limit in~\eqref{eq:weak_reg} by dominated convergence,
since for all $f$ belongs to $ C^1([0,1])$ we have $$\A^nf\to\A f$$ pointwise and $$\|\A^nf\|_\infty\leq \|f'\|_\infty+2\,\|f\|_\infty.$$

\medskip

The weak*-continuity of $$t\mapsto\mu_0 M_t$$ follows from the weak*-continuity of $$t\mapsto\mu_0 M^n_t,$$ using again Lemma~\ref{lm:pass_limit_lin}.

\medskip

For the uniqueness we use that if $(\mu_t)_{t\geq0}$ is a solution to Equation~\eqref{eq:linear} then for all $n\in\N^*,$ all $t>0,$ and all $f$ taken in $C^1([0,1])$ we have
\begin{align}\label{eq:Duhamel_lin}
\frac{d}{ds}\bigg(\int_0^s\mu_\tau M^n_{t-s}f\,d\tau\bigg)&=\mu_s M^n_{t-s}f-\int_0^s\mu_\tau\,\A^nM^n_{t-s}f\,d\tau\nonumber\\
&=\mu_0 M^n_{t-s}f+\int_0^s\mu_\tau(\A-\A^n)M^n_{t-s}f\,d\tau\nonumber\\
&=\mu_0 M^n_{t-s}f+\int_0^s\mu_\tau(\B-\B^n)M^n_{t-s}f\,d\tau.
\end{align}
For proving the validity of the differentiation we write for all $h>0$
\begin{align*}
\frac1h\bigg[\int_0^{s+h}\mu_\tau&M^n_{t-s-h}f\,d\tau-\int_0^s\mu_\tau M^n_{t-s}f\,d\tau\bigg]=\\
&\frac1h\int_s^{s+h}\mu_\tau M^n_{t-s}f\,d\tau+\int_s^{s+h}\mu_\tau\frac{M^n_{t-s-h}f-M^n_{t-s}f}{h}\,d\tau+\int_0^s\mu_\tau\frac{M^n_{t-s-h}f-M^n_{t-s}f}{h}\,d\tau.
\end{align*}
The convergence of the first term above is a consequence of the weak*-continuity of $\tau\mapsto\mu_\tau$
\[\frac1h\int_s^{s+h}\mu_\tau M^n_{t-s}f\,d\tau\xrightarrow[h\to0]{}\mu_sM^n_{t-s}f.\]
For the second term we use that $\tau\mapsto\mu_\tau$ is locally bounded for the TV-norm due to the uniform boundedness principle, because it is weak*-continuous.
Using~\eqref{eq:d_tM^n} we deduce
\[\bigg|\int_s^{s+h}\mu_\tau\frac{M^n_{t-s-h}f-M^n_{t-s}f}{h}\,d\tau\bigg|\leq h\sup_{s\leq \tau\leq s+h}\left\|\mu_\tau\right\|_{\mathrm{TV}}\,\|\A^nf\|_\infty\xrightarrow[h\to0]{}0.\]
For the last term we also use~\eqref{eq:d_tM^n} to get by dominated convergence
\[\int_0^s\mu_\tau\frac{M^n_{t-s-h}f-M^n_{t-s}f}{h}\,d\tau\xrightarrow[h\to0]{}-\int_0^s\mu_\tau\,\A^nM^n_{t-s}f\,d\tau.\]
Now that~\eqref{eq:Duhamel_lin} is proved, we integrate on $[0,t]$ to obtain
\[\int_0^t\mu_\tau f\,d\tau=\int_0^t\mu_0 M^n_{t-s}f\,ds+\int_0^t\int_0^s\mu_\tau(\B-\B^n)M^n_{t-s}f\,d\tau\,ds\]
and by dominated convergence, when $n$ goes to infinity.
\[\int_0^t\mu_\tau f\,d\tau=\int_0^t\mu_0 M_{s}f\,ds.\]
Differentiating this identity with respect to $t$ we get that $$\mu_tf=\mu_0 M_{t}f$$ and then $$\mu_t=\mu_0 M_t$$ because $C^1([0,1])$ is a dense subspace of $C([0,1]).$
\end{proof}

\end{appendices}

\

\noindent{\bf Acknowledgments.}
P.G. has been supported by the ANR project KIBORD, ANR-13-BS01-0004, funded by the French Ministry of Research.


\bibliographystyle{abbrv}
\bibliography{steady-states_LIF}


\end{document}